\documentclass[11pt]{amsart}
\usepackage{euscript}
\usepackage{amssymb}
\usepackage{amsmath}
\usepackage{epic}
\usepackage{graphics}
\usepackage{epsfig}
\usepackage{color}

\usepackage{amscd,euscript}
\usepackage[frame,cmtip,curve,arrow,matrix,line,graph]{xy}

\usepackage[matrix,arrow,curve]{xy}
\usepackage{mathabx}

\numberwithin{equation}{section}

%---------------------------- Theorems etc -------------------------------

\newtheoremstyle{my}{1.5em}{0.5em}{\em}{}{\sc}{.}{0.5em}{}
% #1 = name
% #2 = preskip
% #3 = postskip
% #4 = bodyfont
% #5 = noindent?
% #6 = headfont
% #7 = headpunct, e.g. "."
% #8 = labelsep (between label and statement}
% #9 = apparently overrides the whole header

%\theoremstyle{my}
%\newtheorem{maintheorem}{Theorem}
%\renewcommand{\themaintheorem}{\Alph{maintheorem}}
%\newtheorem{maincor}[maintheorem]{Corollary}

%\theoremstyle{my}
\newtheorem{thm}{Theorem}[section]
\newtheorem{Theorem}[thm]{Theorem}
\newtheorem*{Theorem*}{Theorem}

\newtheorem*{corollary*}{Corollary}

\newtheorem{Proposition}[thm]{Proposition}

\newtheorem*{conjecture*}{Conjecture}
\newtheorem{Question}[thm]{Question}
\newtheorem*{question*}{Question}

\newtheorem*{definitions*}{Definitions}

\newtheorem*{rem*}{Remark}
\newtheorem{Remark}[thm]{Remark}

\newtheorem*{remark*}{Remark}

\newtheorem*{remarks*}{Remarks}
\newtheorem*{example*}{Example}
\newtheorem{Example}[thm]{Example}
\newtheorem*{examples*}{Examples}

\newtheorem*{convention*}{Convention}
\newtheorem*{conventions*}{Conventions}

\newtheorem*{Notes}{Selected references}
\newtheorem*{bibliographical-note*}{Bibliographical note}

\parindent0em
\parskip0.5em

\newcommand{\Acknowledgements}{{\em Acknowledgements.} }

%---------------------------- General commands ---------------------------

\newcommand{\scrB}{\EuScript{B}}

\newcommand{\scrQ}{\EuScript{Q}}
\newcommand{\scrA}{\EuScript{A}}
\newcommand{\scrT}{\EuScript{T}}

\newcommand{\scrX}{\EuScript{X}}

\newcommand{\scrY}{\EuScript{Y}}

\newcommand{\scrL}{\EuScript{L}}

\newcommand{\cF}{\mathcal{F}}

\newcommand{\bR}{\mathbb{R}}
\newcommand{\bZ}{\mathbb{Z}}

\newcommand{\bC}{\mathbb{C}}
\newcommand{\bN}{\mathbb{N}}
\newcommand{\bP}{\mathbb{P}}

\newcommand{\iso}{\cong}           %isomorphism sign
          %homotopy sign
\newcommand{\smooth}{C^\infty}

\newcommand{\tr}{\mathrm{tr}}
\newcommand{\cdbar}{\mathrm{\overline{\partial}}}

\newcommand{\id}{\mathrm{id}}

\renewcommand{\ker}{\mathrm{ker}}

\newcommand{\Hom}{\mathrm{Hom}}
\newcommand{\End}{\mathrm{End}}

\newcommand{\Ob}{\mathrm{Ob}}

\newcommand{\Symp}{\mathrm{Symp}}
\newcommand{\Diff}{\mathrm{Diff}}

%--------------------------- Specific commands ---------------------------

\newcommand{\scrM}{\EuScript{M}}
\newcommand{\scrE}{\EuScript{E}}
\newcommand{\scrF}{\EuScript{F}}
\newcommand{\scrJ}{\EuScript{J}}
\newcommand{\scrG}{\EuScript{G}}

\newcommand{\Cech}[1]{\check{#1}}

\newcommand{\bK}{\mathbb{K}}

\numberwithin{equation}{section}

\setlength{\textwidth}{5.7in}
\setlength{\textheight}{8in}
\setlength{\oddsidemargin}{.2in}
\setlength{\evensidemargin}{.2in}
\setlength{\topmargin}{.1in}
\setlength{\headsep}{.3in}

%%%%%%%%%%%%%%%%%%%%%%%%%%%%%%%%%%%%%%%%%%

%\openup1.6\jot 
\setlength{\topmargin}{0.1\topmargin}
\setlength{\oddsidemargin}{0.5\oddsidemargin}
\setlength{\evensidemargin}{0.5\oddsidemargin}
\setlength{\textheight}{1.02\textheight}
\setlength{\textwidth}{1.1\textwidth}

% symbols
\renewcommand{\leq}{\leqslant}
\renewcommand{\geq}{\geqslant}

\newcommand{\tensor}{\otimes}

\newcommand{\lra}{\longrightarrow}
%\newcommand{\ha}{\small{\frac{1}{2}}}

% letters
\newcommand{\D}{{\mathcal{D}}}

\newcommand{\CC}{\mathcal C}

\renewcommand{\P}{\mathcal{P}}

\newcommand{\Z}{\mathbb{Z}}

% operator names

\newcommand{\Tw}{\operatorname{Tw}}
\newcommand{\Ext}{\operatorname{Ext}}

\renewcommand{\sc}{\operatorname{sc}}
\newcommand{\Auteq}{\mathrm{Auteq}}

\newcommand{\Stab}{\operatorname{Stab}}

\newcommand{\st}{\operatorname{st}}

\usepackage{epigraph}

% \epigraphsize{\small}% Default

\setlength\epigraphwidth{10cm}
\setlength\epigraphrule{0pt}

\usepackage{etoolbox}

\makeatletter
\patchcmd{\epigraph}{\@epitext{#1}}{\itshape\@epitext{#1}}{}{}
\makeatother

%-------------------------------------------------------------------------

\title{A symplectic prolegomenon}
 \author{Ivan Smith}
 \address{Ivan Smith, Centre for Mathematical Sciences, University of Cambridge, CB3 0WB, England.}
\email{is200@cam.ac.uk}
\date{January 2014 (v1). Revised September 2014 (v2).}
\begin{document}
\maketitle

\epigraph{...everything you wanted to say required a context.  If you gave the full context, people thought you a rambling old fool. If you didn't give the context, people thought you a laconic old fool.}{\emph{Julian Barnes}, Staring at the Sun}

%\epigraph{``The abbreviators of works do injury to knowledge"}{--- \textup{Leonardo da Vinci}}

\begin{abstract} A symplectic manifold gives rise to a triangulated $A_{\infty}$-category, the derived Fukaya category, which encodes information on Lagrangian submanifolds and dynamics  as probed by Floer cohomology. This survey aims to give some insight into what the Fukaya category is, where it comes from and what symplectic topologists want to do with it.\end{abstract}

%%%%%%%%%%%%%%%%%%%%%%%%%%%%%%%%%%%
\section{Introduction}

The origins of symplectic topology lie in classical Hamiltonian dynamics, but it also arises naturally in algebraic geometry, in representation theory, in low-dimensional topology and in string theory. In all of these settings, arguably the most important  technology deployed in symplectic topology is Floer cohomology, which associates to a pair of oriented $n$-dimensional Lagrangian submanifolds  $L, L'$ of a $2n$-dimensional symplectic manifold $(X,\omega)$ a $\bZ_2$-graded vector space $HF^*(L,L')$,  which  categorifies the classical homological intersection number of $L$ and $L'$ in the sense that $\chi(HF^*(L,L')) = (-1)^{n(n+1)/2}\, [L]\cdot[L']$. 

In the early days of the subject, Floer cohomology played a role similar to that taken by singular homology groups in algebraic topology at the beginning of the last century: the ranks of the groups provided lower bounds for problems of geometric or dynamical origin (numbers of periodic points, for instance).  But, just as with its classical algebro-topological sibling, the importance of Floer cohomology has increased as it has taken on more structure.  Floer cohomology groups for the pair $(L,L)$ form a unital ring, and $HF^*(L,L')$ is a right module for that ring;  Floer groups for different Lagrangians are often bound together in long exact sequences; the groups are functorial under morphisms of symplectic manifolds defined by Lagrangian correspondences, and are themselves morphism groups in a category whose objects are Lagrangian submanifolds, the Donaldson category, associated to $X$.  

More recently, attention has focussed on more sophisticated algebraic structures,  and on a triangulated $A_{\infty}$-category $\scrF(X)$ underlying the Donaldson category, known as the Fukaya category.  $A_{\infty}$-structures occur elsewhere in mathematics, and -- largely in the wake of Kontsevich's remarkable Homological Mirror Symmetry conjecture -- it is now understood that symplectic topology will be glimpsed, through a glass darkly, by researchers in other fields, precisely because categories of natural interest in those subjects can be interpreted as Fukaya categories of symplectic manifolds (often, of symplectic manifolds which at first sight are nowhere to be seen in the sister theory).  Whilst the formulation of mirror symmetry makes the relevance of Fukaya categories manifest to algebraic geometers working with derived categories of sheaves, there are also connections to geometric representation theorists interested in the Langlands program, to cluster algebraists studying quiver mutation, to low-dimensional topologists working in gauge theory or on quantum invariants of knots, and beyond.  Perversely, for some the Fukaya category might be the first point of contact with symplectic topology at all.

For the motivated graduate student aiming to bring  the Fukaya category into her  daily toolkit, there are both substantive treatments \cite{FO3, FCPLT} and gentler introductions \cite{Auroux} already available.  This more impressionistic survey might nevertheless not be entirely unhelpful.    We hope it \emph{will}    motivate those using holomorphic curve theory to engage with the additional effort involved in adopting a categorical perspective: we try to show how the increase in algebraic complexity extends the reach of the theory, and what it means to ``compute" a category.  More broadly, 
we hope to render the Fukaya category more accessible -- or at least more familiar -- to those, perhaps in adjacent areas,  who wish to learn something of what symplectic topology is concerned with, of how triangulated categories arise in that field and what we might hope to do with them.  

\noindent \textbf{Apologies.}  Floer theory is full of subtleties, which we either   ignore\footnote{\emph{The abbreviators of works do injury to knowledge}, Leonardo da Vinci, c.1510.} or identify and then skate over, in the hope of not obscuring a broader perspective\footnote{I am grateful to the anonymous referee for several improvements to my skating style.}.  This is a rapidly evolving field and the snapshot given here is far from comprehensive.

\vspace{0.2cm}
\Acknowledgements My view of the Fukaya category has been predominantly shaped through conversations and collaborations with Mohammed Abouzaid, Denis Auroux and Paul Seidel.  It has been a  privilege to learn the subject in their wake.  

%%%%%%%%%%%%%%%%%%%%%%%%%%%%
\section{Symplectic background}

\subsection{First notions} A symplectic manifold $(M^{2n},\omega)$ is an even-dimensional manifold which is obtained by gluing together open sets in $\bR^{2n}$ by diffeomorphisms whose derivatives lie in the subgroup $Sp_{2n}(\bR) = \left\{A \ | \ A^t J A = J \right\} \leq GL_{2n}(\bR)$, where $J = \left(\begin{array}{cc} 0 & -1 \\ 1 & 0 \end{array} \right)$ is a fixed skew-form on Euclidean space. Taking co-ordinates $z_j = x_j + iy_j$, $1\leq j \leq n$ on $\bC^n$, and letting $\omega_{\st} = \sum_j dx_j \wedge dy_j$, these are the diffeomorphisms which preserve the 2-form $\omega_{\st}$, and in fact a manifold is symplectic if and only if it admits a 2-form $\omega$ which is closed, $d\omega = 0$, and non-degenerate, $\omega^n > 0$.  This already excludes various topologies, starting with the sphere $S^4$.

The single most important feature of symplectic geometry is that, via the identification $T^*M \cong TM$ defined by (non-degeneracy of) $\omega$, any smooth $H: M \rightarrow \bR$ defines a vector field $X_H = \iota_{\omega}(dH)$, whose flow $\phi^t_H$ preserves the symplectic structure. Thus, symplectic manifolds have infinite-dimensional locally transitive symmetry groups, and there are no local invariants akin to Riemannian curvature.  The Darboux-Moser theorem strengthens this, asserting that (i) any symplectic $2n$-manifold  is locally isomorphic to $(B^{2n}, \omega_{\st})$, and (ii) deformations of $\omega$ preserving its cohomology class can be lifted to global symplectomorphisms, at least when $M$ is compact. The interesting topological questions in symplectic geometry are thus  global in nature.  As an alternative worldview,  one could say that any symplectic manifold is locally as complicated as Euclidean space, about which a vast array of questions remain open.

Two especially important classes of symplectic manifolds are:
\begin{enumerate}
\item (quasi-)projective varieties $X \subset \bP^N$, equipped with the restriction of the Fubini-Study form $\omega_{\operatorname{FS}}$; in particular, any finite type oriented two-dimensional surface, where the symplectic form is just a way of measuring area;
\item cotangent bundles $T^*Q$, with the canonical form $\omega =d\theta$, $\theta$ the Liouville 1-form, and more generally Stein manifolds, i.e. closed properly embedded holomorphic submanifolds of $\bC^N$, equivalently complex manifolds equipped with an exhausting strictly plurisubharmonic function $h$, with $\omega = -dd^ch$.
%\item coadjoint orbits of Lie groups, with the Kostant-Kirillov-Souriau symplectic form.
\end{enumerate}

Symplectic manifolds are flexible enough to admit cut-and-paste type surgeries \cite{Gompf}; many such in dimension four yield examples which are provably not algebraic, and one dominant theme in symplectic topology is to understand the inclusions $\{$K\"ahler$\} \subset \{$Symplectic$\} \subset \{$Smooth$\}$.  %For instance, there are minimal (i.e. not blow-ups) symplectic manifolds which violate the basic Chern-class inequalities which hold for algebraic surfaces (e.g. Noether's inequality $5c_1^2-c_2+36 \geq 0$); on the other hand, at least in dimension 4, there are almost complex manifolds which look cohomologically symplectic, for instance $\bP^2 \# \bP^2 \# \bP^2$, but which admit no symplectic structure because of delicate constraints coming from gauge theory.   
For open manifolds, there is a good existence theory for symplectic structures, but questions of uniqueness remain subtle:  there are countably many symplectically distinct (finite type) Stein structures on Euclidean space, and recognising the standard structure is algorithmically undecidable \cite{McLean:Computability}.

\subsection{Second notions}
A second essential feature of symplectic geometry is that any symplectic manifold admits a \emph{contractible} space $\scrJ$ of almost complex structures which tame the symplectic form, in the sense that $\omega(v, Jv) > 0$ for any non-zero tangent vector $v$. Following a familiar pattern, discrete (e.g. enumerative) invariants defined with respect to an auxiliary choice drawn from a contractible space reflect intrinsic properties of the underlying symplectic structure. 

A submanifold $L\subset M$ is Lagrangian if it is isotropic, $\omega|_L = 0$, and half-dimensional (maximal dimension for an isotropic submanifold).  In the classes given above, Lagrangian submanifolds include (1) the real locus of an algebraic variety defined over $\bR$, (2) the zero-section and cotangent fibre in $T^*Q$, respectively.  Gromov's fundamental insight was that, for a generic taming almost complex structure, spaces of non-multiply-covered holomorphic curves in $M$ (perhaps with Lagrangian boundary conditions) are finite-dimensional smooth manifolds, which moreover admit geometrically meaningful compactifications by nodal curves.   It is the resulting invariants which count ``pseudoholomorphic" curves (maps from Riemann surfaces which are $J$-complex linear for some taming but not necessarily integrable $J$) that dominate the field.

Discrete invariants of symplectic manifolds and Lagrangian submanifolds include characteristic classes which determine the virtual, i.e. expected, dimensions of moduli spaces of holomorphic curves (when non-empty), via index theory. Any choice of taming $J$ makes $TM$ a complex bundle, which has Chern classes $c_i(M) \in H^{2i}(M;\bZ)$.  Since the Grassmannian  $U(n) / O(n)$ of Lagrangian subspaces of $\bC^n$ has fundamental group $\bZ$, any Lagrangian submanifold has a Maslov class $\mu_L: \pi_2(M,L) \rightarrow \bZ$, which measures the twisting of the Lagrangian tangent planes around the boundary of a disc $D$ relative to a trivialisation of $TM|_D$; under $H^2(M,L;\bZ) \rightarrow H^2(M;\bZ)$, $\mu_L$  maps to $2c_1(M)$.    The moduli space of \emph{parametrised} rational curves $\bP^1 \rightarrow M^{2n}$ representing a homology class $A$ has real dimension $2\langle c_1(M),A\rangle + 2n$, hence dividing by $\bP SL_2(\bC)$ the space of unparametrized curves has dimension $2\langle c_1(M), A\rangle + 2n - 6$. The space of parametrised holomorphic discs on $L$ in a class $u \in \pi_2(M,L)$ has virtual real dimension $n + \mu(u)$, and carries an action of the 3-dimensional real group $\bP SL_2(\bR)$.

\subsection{Contexts} Why study symplectic topology at all?  

\subsubsection{Lie theory} Alongside their renowned classification of finite-dimensional Lie groups, Lie and Cartan also considered infinite-dimensional groups of symmetries of Euclidean space. To avoid redundancies, they considered groups of symmetries acting locally transitively on $\bR^n$, which did not preserve any non-trivial foliation by parallel planes $\bR^k \subset \bR^n$.  There turn out to be only a handful of such pseudogroups (see \cite{SingerSternberg} for a modern account):  the full diffeomorphism group, volume-preserving diffeomorphisms and its conformal analogue, symplectic or  conformally symplectic transformations in even dimensions, contact transformations in odd dimensions, and holomorphic analogues of these groups. Thus, symplectic geometry arises as one of a handful of ``natural" infinite-dimensional geometries (in contrast to the classification in finite dimensions, there are no ``exceptional" infinite-dimensional groups of symmetries of $\bR^n$). 

\subsubsection{Dynamics} Fix a smooth function $H: \bR^{2n} \rightarrow \bR$. Hamilton's equations are
\[
\frac{\partial H}{\partial q} = -\dot{p}; \ \frac{\partial H}{\partial p} = \dot{q}
\]
where $(q,p) \in \bR^n \times \bR^n = T^*\bR^n$.  If $H = p^2/2 + U(q)$ for a potential function $U: \bR^n \rightarrow \bR$, these  
govern the evolution of a particle of mass 1 with position and momentum co-ordinates $(q,p) \in T^*\bR^n$, subject to a force $ -\partial U /\partial q$.  The identity
$
\frac{dH}{dt} =  \frac{\partial H}{\partial q} \frac{dq}{dt} + \frac{\partial H}{\partial p} \frac{dp}{dt} = 0
$
yields conservation of the energy $H$, and classical Poincar\'e recurrence phenomena follow from the fact that the evolution of the system preserves volume, via preservation of the measure $\omega_{\st}^n = \prod_j dp_j dq_j$; but in fact, the evolution preserves the underlying symplectic form $\omega_{\st}$.  Thus, classical Hamiltonian dynamics amounts to following a path in $\Symp(\bR^{2n})$.  

Given two points $x,y \in Q$, and a time-dependent Hamiltonian function $H_t: T^*Q \rightarrow \bR$, the intersections of the Lagrangian submanifolds $\phi_H^1(T_x^*)$ and $T_y^*$ exactly correspond to the trajectories of the system from $x$ to $y$, so from the dynamical viewpoint Lagrangian intersections are natural from the outset.

\subsubsection{Geometry of algebraic families\label{Sec:AlgFamily}} Let $f: X \rightarrow B$ be a holomorphic map of projective varieties, smooth over $B^0 \subset B$. Then $f$ defines a locally trivial fibration over $B^0$.  In most cases, the smooth fibres of $f$ will be distinct as complex manifolds, but Moser's theorem implies that they are isomorphic as symplectic manifolds; the structure group of the fibration reduces to $\Symp(f^{-1}(b))$. Consider for example the quadratic map $\pi: \bC^3 \rightarrow \bC$, $(z_1,z_2,z_3) \mapsto \sum z_j^2$, which is smooth over $\bC^*$.  The fibres away from zero are smooth affine conics, symplectomorphic to $(T^*S^2, d\theta)$.  The monodromy of the fibration is a \emph{Dehn twist} in the Lagrangian vanishing cycle $S^2 \subset T^*S^2$. A basic observation\footnote{$\Diff_{ct}$ resp. $\Symp_{ct}$ denotes the group of compactly supported diffeomorphisms resp.  symplectomorphisms.} uniting insights of Arnol'd, Kronheimer and Seidel is that this monodromy is of order 2 in $\pi_0 \Diff_{ct}(T^*S^2)$, but is of infinite order in $\pi_0 \Symp_{ct}(T^*S^2)$ \cite{Seidel:twist}, see Section \ref{Sec:Appl1}.  Thus, passing from symplectic to smooth monodromy loses information.  This phenomenon is ubiquitous, and symplectic aspects of monodromy seem inevitable in the parts of algebraic geometry that concern moduli.

The fixed points of a symplectomorphism $\phi: X \rightarrow X$ are exactly the intersection points of the Lagrangian submanifolds $\Delta_X, \Gamma_{\phi}$ in the product $(X\times X, \omega \oplus -\omega)$, connecting  questions of symplectic monodromy with Lagrangian intersections.

\subsubsection{Low-dimensional topology}  There are symplectic interpretations of many invariants in low-dimensional topology (the Alexander polynomial, Seiberg-Witten invariants, etc). Donaldson conjectured \cite[p.437]{McD-S} that two homeomorphic smooth symplectic 4-manifolds $X,Y$ are diffeomorphic if and only if the product symplectic structures on $X\times S^2$ and $Y\times S^2$ are deformation equivalent (belong to the same component of the space of symplectic structures), suggesting that symplectic topology is fundamentally bound up with exotic features of 4-manifold topology.

\subsection{Some open questions}
Many basic questions in symplectic topology seem, to date, completely inaccessible.  For instance, it is unknown whether $\Symp_{ct}(\bR^{2n}, \omega_{\st})$ is contractible, connected or even has countably many components, for $n>2$.  There is no known example of a simply-connected closed manifold of dimension at least 6  which admits almost complex structures and a degree 2 cohomology class of non-trivial top power but no symplectic structure.   Without denying the centrality of such problems, in the sequel we concentrate on questions on which at least some progress has been made.

\subsubsection{Topology of Lagrangian submanifolds} Questions here tend to fall into two types: what can we say about  (the uniqueness, displaceability of...) Lagrangian submanifolds that we see in front of us, and what can we say about (the existence, Maslov class of...) those we don't?  

\begin{Question} What are the Lagrangian submanifolds of $(\bC^n, \omega_{\st})$ or of  $(\bP^n, \omega_{\operatorname{FS}})$ ? \end{Question}

To be even more concrete, we know the only prime 3-manifolds which admit Lagrangian embeddings in $(\bC^3,\omega_{\st})$ are products $S^1\times \Sigma_g$ \cite{Fukaya:LagC3}; for any orientable 3-manifold $Y$, $Y\#(S^1\times S^2)$ admits a Lagrangian embedding in $(\bC^3,\omega_{\st})$ \cite{YETI}; but essentially nothing is known about which connect sums of hyperbolic 3-manifolds admit such Lagrangian embeddings.

\begin{Question}\label{Q:Arnold} If $T^*L \cong_{\omega} T^*L'$ are symplectomorphic, must $L$ and $L'$ be diffeomorphic ?\end{Question}

Arnold's ``nearby Lagrangian submanifold" conjecture would predict ``yes". Abouzaid and Kragh \cite{Kragh,Abouzaid:Maslov0} have proved that $L$ and $L'$ must be homotopy equivalent, and in a few cases --  $L=S^{4n+1}$ or  $L= (S^1\times S^{8n-1})$, see \cite{Abouzaid:exotic,EkholmSmith} --  it is known that $T^*L$ remembers aspects of the smooth structure on $L$, i.e. $T^*L \not\cong_{\omega} T^*(L\#\Sigma)$ for certain homotopy spheres $\Sigma$.

\begin{Question} Can a closed $(X,\omega)$ with $\dim_{\bR}(X)>2$ contain infinitely many pairwise disjoint Lagrangian spheres?
\end{Question}

For complete intersections in projective space there are bounds on numbers of \emph{nodes} (hence simultaneous vanishing cycles) coming from Hodge theory, but the relationship between Lagrangian spheres and vanishing cycles is slightly mysterious, see e.g. \cite{CortiSmith}. 

\subsubsection{Mapping class groups and dynamics} The symplectic mapping class group $\pi_0 \Symp(X)$  is a rather subtle invariant  in that it can change drastically under perturbations of $\omega$ which change its cohomology class (so are not governed by Moser's theorem).

\begin{Question}\label{Question:Monodromy} If $X_d \subset \bP^N$ is the degree $d$ hypersurface, $d>2$, and $\scrX_d$ the coarse moduli space of all such, describe the (co)kernel of the natural  representation $\pi_1(\scrX_d) \rightarrow \pi_0\Symp(X_d)$. 
\end{Question}

It is known the map has infinite image;  it seems unlikely that it is injective in this generality.  The cokernel is even more mysterious: it is unknown whether $\pi_0\Symp(X_d)$ is finitely generated.  Closer to dynamics, one can look for analogues of Rivin's result \cite{Rivin} that a random walk on the classical mapping class group almost surely yields a pseudo-Anosov.  

\begin{Question} \label{Question:RandomWalk}
For a random walk on $\pi_0\Symp(X_d)$, $d>2$, does the number of periodic points of any representative of the final mapping class grow exponentially with probability one?
\end{Question}

A quite distinct circle of dynamical questions arises from questions of \emph{flux}.  An element $\phi \in \Symp_0(M)$, i.e. a symplectomorphism isotopic to the identity, is Hamiltonian if it is the time one map of a Hamiltonian flow $\phi_H^t$.  Fix $a\in H^1(M;\bR)$ and a path $a_t$ of closed 1-forms with $\int_0^1 a_t dt = a$. The $a_t$ are dual to symplectic vector fields $X_t$ which generate an isotopy $\phi^t$, and $\phi^1 = \phi \in \Symp_0(M)$ depends up to Hamiltonian isotopy only on $a$ and not on the path $(a_t)$.  The \emph{flux group} $\Gamma \subset H^1(M;\bR)$ comprise those $a$ for which $\phi^1 \simeq \id$ is Hamiltonian isotopic to the identity; a deep theorem of Ono \cite{Ono} implies $\Gamma$ is discrete.

\begin{Question}
Which  $a\in H^1(M,\bR)$ are realised by a fixed-point-free symplectomorphism?
\end{Question}

Elements of $\Gamma$ cannot be, by the (solution of the) Arnol'd conjecture \cite{FOno, LiuTian}.  Note that flux is probing $\Symp_0(M)$, rather than $\pi_0 \Symp(M)$, in contrast to  Question \ref{Question:RandomWalk}.

\subsection{The phenomenon or philosophy of mirror symmetry}\label{Sec:Mirror}

In its original formulation, Kontsevich's homological mirror symmetry conjecture  \cite{Kontsevich} asserts
\begin{equation} \label{Eqn:HMS}
D^{\pi}\scrF(X) \simeq D^b(\Cech{X})
\end{equation}
where $X, \Cech{X}$ area a pair of mirror Calabi-Yau manifolds,  $D^b(\Cech{X})$ denotes the bounded derived category of coherent sheaves, whilst $D^{\pi}\scrF(X)$ is the split-closed derived Fukaya category. The former has objects finite-length complexes of algebraic vector bundles  $\mathcal{E}^{\bullet} \rightarrow \Cech{X}$, with morphisms being Cech complexes $\Cech{C}^*(\EuScript{U}; \mathcal{H}om(\mathcal{E}^{\bullet}, \mathcal{F}^{\bullet}))$, with $\EuScript{U}$ a fixed affine open cover of $\Cech{X}$; the latter is obtained by a formal algebraic process of enlargement from a category whose objects are  ``unobstructed" Lagrangian submanifolds of $X$ and whose morphisms are the Floer cohomology groups introduced below, see Sections \ref{Sec:Floer} and \ref{Sec:Fukaya}.  The formulation hides the fact that in general given $X$ there is no obvious way of producing $\Cech{X}$, and indeed it may not be unique.  
(Better formulations, due to Gross and Siebert \cite{GrossSiebert},  involve an  involutive symmetry on a certain class of toric degenerations of a Calabi-Yau. The central fibres of those degenerations should be unions of toric varieties which are  dual in a direct combinatorial manner.)  

Geometrically, following Strominger-Yau-Zaslow \cite{SYZ}, the conjectured  relation between $X$ and $\Cech{X}$ is that they carry dual Lagrangian torus fibrations (with singular fibres); the base of such a Lagrangian fibration is an integral affine manifold with singularities, and the toric duality arises from a ``Legendre transform"  on such manifolds.   From this viewpoint, mirror symmetry has been extended to a much wider range of contexts: given a variety $X$ with effective anticanonical divisor $D\subset X$, one looks for a Lagrangian fibration on $U=X\backslash D$ and takes its dual $\Cech{U}$.  Compactification from $U$ back to $X$ is mirror to turning on a holomorphic ``potential" function $W: \Cech{U}\rightarrow \bC$, built out of counts of holomorphic discs with boundary on the Lagrangian torus fibres, and vice-versa. (In the original Calabi-Yau setting, $D = \emptyset$ and $W$ is constant.) There are still categories of Lagrangians and sheaves  which can be compared on the two sides of the mirror: for instance, if $X$ is Fano, one  asks if $D^{\pi}\scrF(X) \simeq D^b_{sing}(W:\Cech{U} \rightarrow \bC)$, the RHS being the homotopy category of matrix factorisations of $W$ \cite{Orlov:Dbsing}.   

Elucidating this would be too much of a digression:  for motivational purposes, it suffices to know that there are algebro-geometric routes to computing categories which are conjecturally, and sometimes provably, equivalent to  categories built out of compact Lagrangian submanifolds, even though one has no realistic hope of classifying or even enumerating such Lagrangians.  Conversely, categories of interest in algebraic geometry can be studied via symplectic methods: symplectic automorphisms of $X$ act on $D^{\pi}\scrF(X)\simeq D^b(\Cech{X})$, and the symplectic mapping class group of $X$ tends to be much richer than the group of algebraic automorphisms of $\Cech{X}$.

\begin{Notes} For symplectic topology basics see \cite{ACdaS, McD-S}; for dynamics and topology see \cite{HZ, EOSS}; for a mirror symmetry overview see \cite{AspinwallEtAl, Ballard, Gross:tropical}. \end{Notes}

%%%%%%%%%%%%%%%%%%%%%%%%%%%
\section{Floer cohomology}\label{Sec:Floer}

\subsection{Outline of the definition}

Fix a coefficient field $\bK$. Under suitable technical assumptions,  Floer cohomology associates to a pair of closed oriented Lagrangian submanifolds $L, L' \subset X$  a $\bZ_2$-graded vector space $HF^*(L,L')$ over $\bK$ which has the following basic features:
\begin{enumerate}
\item The Euler characteristic $\chi(HF^*(L,L')) = (-1)^{n(n+1)/2}\, [L]\cdot[L']$ recovers the algebraic intersection number of the homology classes defined by $L, L'$;
\item There is a Poincar\'e duality isomorphism $HF^*(L,L') \cong HF^{n-*}(L',L)^{\vee}$;
\item The space $HF^*(L,L')$ is invariant under Hamiltonian isotopies of either $L$ or $L'$, i.e. $HF^*(L,L') \cong HF^*(\phi(L), L')$ for any Hamiltonian symplectomorphism $\phi$;
\item $HF^*(L, L') = 0$ if $L$ and $L'$ are disjoint; hence its non-vanishing provides an obstruction to Hamiltonian displaceability.
\end{enumerate}

Let $L, L'$ be closed orientable Lagrangian submanifolds of a symplectic manifold $(X,\omega)$. We assume that $X$ is either closed, or convex at infinity in the sense that there is a taming almost complex structure $I$ on $X$, and an exhaustion (a sequence of relatively compact open subsets $U_1 \subset U_2 \subset \cdots$ whose union is $X$), such that if $S$ is a connected compact Riemann surface with nonempty boundary, and $u: S \rightarrow X$ an $I$-holomorphic map with $u(\partial S) \subset U_k$, then necessarily $u(S) \subset U_{k+1}$.  Symplectic structures obtained from plurisubharmonic exhaustive functions on Stein manifolds satisfy these properties \cite{CieliebakEliashberg}. 

Choose a compactly supported time-dependent Hamiltonian $H \in \smooth_{ct}([0,1] \times X,\bR)$, whose associated Hamiltonian isotopy $(\phi^t)$ has the property that $\phi^1(L)$ intersects $L'$ transversally. If char$(\bK) = 2$, the Floer cochain complex is by definition freely generated over $\bK$ by that set of intersection points.  If $\textrm{char}(\bK)\neq 2$, index theory associates to an intersection point (i.e. Hamiltonian chord) $x$ a 1-dimensional $\bK$-vector space $\mathrm{or}_x$ generated by two elements (the ``coherent orientations" at $x$) subject to the relation that their sum vanishes.  For the differential, we choose a generic family $J=\{J_t\}_{t\in[0,1]}$ of taming almost complex structures on $X$. Given two intersection points $x_\pm$, we denote by $\scrM(x_-,x_+)$ the moduli space of Floer trajectories
\begin{equation} \label{eq:Floer}
\begin{aligned}
 & u: \bR \times [0,1] \longrightarrow X, \\
 & u(s,0) \in L, \;\; u(s,1) \in L', \\
 & \partial_s u + J(t,u) \big(\partial_tu - X(t,u)\big) = 0, \\
 & \textstyle\lim_{s \rightarrow \pm\infty} u(s,t) = y_\pm(t), \\
 & E(u) = \int u^*\omega = \int \!\!\! \int \big| \partial u / \partial s \big|^2 \, ds\,dt \ < \infty,
\end{aligned}
\end{equation}
where $X$ denotes the Hamiltonian vector field of $H$, meaning that $\omega(X,\cdot) = dH$, and $y_\pm$ are flow lines of $X$ with $y_\pm(1) = x_\pm$.  Then
\begin{equation} \label{Eqn:FloerDifferential}
d_J(x_+) = \sum_{x_-} n_{x_+, x_-} x_-
\end{equation} where $n_{x_+, x_-} \in \bK$ counts the isolated solutions $u \in \scrM(x_-, x_+) / \bR$ (where the $\bR$-action is by translation in the $s$-variable) with a sign canonically encoded as an isomorphism $\delta_u: \mathrm{or}_{x_+} \stackrel{\cong}{\longrightarrow} \mathrm{or}_{x_-}$. One often counts the solutions weighted by the symplectic area of the corresponding holomorphic disc; for that and the question of convergence of \eqref{Eqn:FloerDifferential} see Section \ref{Sec:Novikov}. 

\begin{center}
\begin{figure}[ht]
\includegraphics[scale=0.3]{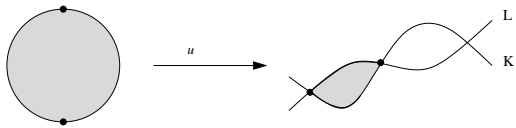}
\caption{The Floer differential in $CF^*(L,K)$\label{Fig:FloerWhitney}}
\end{figure}
\end{center}

Suppose $L \pitchfork L'$ are transverse, and take $H \equiv 0$. Then the equation \eqref{eq:Floer} is just the usual Cauchy-Riemann equation $\cdbar_J(u) = 0$.  There is a conformal identification $\bR\times[0,1] \cong D^2 \backslash \{\pm i\}$, and a solution to the Floer equation extends to a continuous map of the closed disk, taking the points $\pm i$ to the chosen intersection points $x_{\pm} \in L \cap L'$, so in this sense the Floer differential counts holomorphic disks, cf. Figure \ref{Fig:FloerWhitney}. Crucially, (i) for generic data one expects moduli spaces of solutions to \eqref{eq:Floer} to be finite-dimensional smooth manifolds; and (ii) those moduli spaces admit geometrically meaningful compactifications. The smoothness and finite-dimensionality of moduli spaces is a consequence of the ellipticity of the Cauchy-Riemann equations (with totally real boundary conditions), whilst the geometric description of compactifications of spaces of curves with uniformly bounded area is a variant on ideas going back to Deligne and Mumford's introduction of the moduli space of stable curves: the boundary strata index maps from nodal Riemann surfaces (trees of discs and spheres).

It is a non-trivial theorem, under additional hypotheses discussed below in Section \ref{Sec:GeometricHyp}, that $d_J^2 = 0$ so this indeed defines a chain complex.  The mechanism underlying $d_J^2=0$, as in Morse theory, is that one-dimensional moduli spaces of solutions to \eqref{eq:Floer}  have compactifications whose boundary strata label contributions to $d_J^2$; the additional hypotheses rule out other, unwanted, degenerations which would disrupt that labelling.  Given that, the associated cohomology $HF^*(L,L')$ is denoted Floer cohomology. A ``homotopy of choices" argument implies that it is independent both of the choice of $J$ and of Hamiltonian isotopies of either of $L$ or $L'$. Indeed, given either a family $\{J_s\}$ of almost complex structures or a moving Lagrangian boundary condition $\{L_s\}$ constant for $|s| \gg 0$, one considers the continuation map equation
\begin{equation} \label{Eqn:Continuation}
 \partial_s u + J_{s}(t,u) \big(\partial_tu - X_{s}(t,u)\big) = 0
\end{equation}  
with the boundary conditions $\{L_s, L'\}$, and the count of solutions gives a chain map between the relevant Floer complexes. One sees this induces an isomorphism on cohomology, by deforming the concatenation of the map with that induced by the inverse isotopy back to the constant family, which induces the identity on cohomology.

\begin{Example} Let $X=T^*S^1$ and consider the Lagrangians $L, L'$ given by the 0-section and a homotopic circle meeting that transversely twice at points $a,b$.  There are two lunes $u,v$ spanning the circles, which by Riemann mapping are conformally the images of holomorphic strips.  The complex $CF(L, L') = \bK\langle a\rangle \oplus \bK\langle b\rangle$.  Take $\bK = \bC$. Counting holomorphic strips $u$ weighted by their area $e^{-\int_u \omega}$, and taking into account  orientations of  holomorphic strips, one sees that
\[
\partial(a) = (e^{-\omega(u)} - e^{-\omega(v)}) \, b, \qquad \partial(b) = 0.
\]
Therefore $HF(L,L') =0$ unless $\omega(u) = \omega(v)$; indeed, if the areas are different, one can Hamiltonian isotope $L'$ to be a parallel translate to $L$, disjoint from it.  If the areas of the lunes are equal, $L'$ is a Hamiltonian image of $L$, and $HF(L,L')$ has rank 2.
\end{Example}

 A typical paper in Floer theory begins by laying down a number of technical hypotheses on the ambient geometry.  In Sections \ref{Sec:GeometricHyp} and \ref{Sec:AlgebraicHyp} we gather together some of the most common of these,  indicating where they play a role, and what one can achieve in their absence or presence.

\subsection{Motivations for the definition} The first motivation below is historically accurate, but the others provide useful context.

\subsubsection{Morse theory on the path space}  If $Q$ is a smooth finite-dimensional manifold, and $f: Q \rightarrow \bR$ is a generic function which, in particular, has non-degenerate critical points, then the Morse complex $C^*(f)$ of $f$ is generated by the critical points of $f$, and the differential counts gradient flow lines 
\[
\gamma: \bR \rightarrow Q,  \quad \dot{\gamma}(t) = \nabla f_{\gamma(t)}
\]
where the gradient flow is defined with respect to an auxiliary choice of Riemannian metric on $Q$.  There is a generalisation, due to Novikov, in which the function $f$ is replaced by a one-form which is not necessarily exact. The Floer complex is obtained from a formal analogue of that construction, replacing $Q$ by the space of paths $\mathcal{P}(L,L')$ and the function $f$ by the action functional (one-form) $\scrA$ which integrates the symplectic form over an infinitesimal variation of some path $\gamma$, $\xi \mapsto \int_0^1\omega(\xi, \dot{\gamma}) dt$, for $\xi$ a vector field along $\gamma$.  The critical points of $\scrA$ are constant paths, i.e. intersection points. Fix a compatible almost complex structure $J$ on $(X,\omega)$, i.e. $J$ is taming and $\omega(J\cdot, J\cdot) = \omega(\cdot,\cdot)$. Then $J$ defines an $L^2$-metric on the path space, and the formal gradient flow equation is the Cauchy-Riemann equation  \eqref{eq:Floer}, with $H=0$. 

\begin{Remark} \label{Rem:Filtration}
The gradient flow viewpoint on $HF^*(L,L')$ brings out an additional structure: if $\scrA$ is single-valued the complex $CF^*(L,L')$ is naturally filtered by action, i.e. by the values of the functional $\scrA$, and the Floer differential respects that filtration.
\end{Remark}

\subsubsection{Geometric intersection numbers} Let $\Sigma_g$ be an oriented surface of genus $g$. For (homotopically non-trivial) curves $\alpha, \beta$ on $\Sigma$, let $\iota(\alpha,\beta) \in \bZ_{\geq 0}$ denote their geometric intersection number.  A diffeomorphism $f$ defines a sequence $\iota(\alpha, f^n(\beta))$ for $n\in \bN$, which depends only on the mapping class $[f] \in \pi_0\Diff(\Sigma)$ of $f$.  The Nielsen-Thurston classification can be formulated dynamically as follows \cite{FLP}: each $f$ satisfies one of the following.

\begin{itemize}
\item $f$ is reducible if some power of $f$ preserves a simple closed curve up to isotopy;
\item $f$ is periodic if for every $\alpha, \beta$ the sequence $\{\iota(\alpha, f^n\beta)\}$ is periodic in $n$;
\item $f$ is pseudo-Anosov if for every $\alpha,\beta$ the sequence $\{\iota(\alpha, f^n\beta)\}$ grows exponentially in $n$.
\end{itemize}

The classification should be viewed as inductive in the complexity of the surface, with the reducible case indicating that  (a power of) $f$ is induced by a diffeomorphism of a simpler surface.  In the most interesting, pseudo-Anosov case, the logarithm of the growth rate of the geometric intersection numbers is independent of $\alpha,\beta$, and is realised by the translation length in the Teichm\"uller metric of the action of the mapping class $f$ on Teichm\"uller space.   Any diffeomorphism of $\Sigma$ is isotopic to an area-preserving diffeomorphism. For curves $\alpha, \beta$ which are not homotopic (or homotopically trivial), $\iota(\alpha,\beta) = \mathrm{rk}\, HF(\alpha,\beta)$, so the Thurston classification is also a classification of symplectic mapping classes in terms of the growth rate of Floer cohomology.  It is a long-standing challenge to find analogues in higher dimensions.

\subsubsection{Surgery theory}   The classification of high-dimensional manifolds relies on the Whitney trick. Consider two orientable submanifolds $K^k, N^{d-k} \subset M^d$ of complementary dimension in a simply-connected oriented manifold $M$, with $d>4$ and $k>2$. Suppose $K,N$ meet in a pair of points $\{x,y\}$ of opposite intersection sign; given arcs between $x$ and $y$ in $K$ and $N$, the Whitney trick  slides $K$ across a disc spanning the arcs (the disc exists by simple-connectivity of $M$, and can be taken embedded and disjoint from $K,N$ in its interior since $d > 4$ and $k>2$), to reduce the geometric intersection number of the submanifolds.  Floer cohomology imagines a refined version of the Whitney trick, in which one would only be allowed to cancel excess intersections between Lagrangian submanifolds by sliding across \emph{holomorphic} Whitney disks. 

The surgery-theoretic aspects of Floer cohomology itself have yet to be developed,  i.e. given a pair of Lagrangian submanifolds $L, L'$ which meet transversely at exactly two points and bound a unique holomorphic disk, it is unknown when the intersection points can be cancelled by moving $L$ by Hamiltonian isotopy.  This is one of the most basic gaps in the theory.

\subsubsection{Quantum cohomology}  Floer cohomology is an open-string (concerning genus zero Riemann surfaces with boundary) analogue of a closed-string invariant (defined through counts of rational curves).  The quantum cohomology of a closed symplectic manifold is  additively  $H^*(X;\bK)$ (with $\bK$ typically a Novikov field, cf. \eqref{Eqn:NovikovField}), but is equipped with a product $\ast$ which deforms the classical intersection product by higher-order contributions determined by 3-point Gromov-Witten invariants. For smooth projective varieties it can be defined purely algebro-geometrically, via intersection theory on moduli stacks of stable maps.   If $\phi: X\rightarrow X$ is a symplectomorphism, then both the diagonal $\Delta_X$ and the graph $\Gamma_{\phi}$ of $\phi$ are Lagrangian in $(X,\omega) \times (X,-\omega)$, and we define\footnote{There is a direct construction of $HF^*(\phi)$ which is more elementary and more generally applicable, but for brevity we will content ourselves with the description via Lagrangian Floer cohomology.} $CF^*(\phi) = CF^*(\Delta_X, \Gamma_{\phi})$.  The generators of $CF^*(\phi)$ are given by the fixed points of $\phi$, assuming these are non-degenerate. To connect the two discussions, there is an isomorphism $HF^*(\id) \cong QH^*(X)$. 

Quantum cohomology is an important invariant in its own right:  it distinguishes interesting collections of symplectic manifolds (e.g. showing the moduli space of symplectic structures modulo diffeomorphisms has infinitely many components on  $K \times S^2$, with $K \subset \bC\bP^3$ a quartic $K3$ surface); it is invariant under 3-fold flops; and it has deep connections to integrable systems.   Viewed as a far-reaching generalisation of $QH^*(X) \cong HF^*(\Delta_X, \Delta_X)$, Floer cohomology for general pairs of Lagrangian submanifolds has the disadvantage of rarely being amenable to algebro-geometric computation, at least prior to insights from mirror symmetry.

\subsection{Technicalities with the definition: Geometric hypotheses\label{Sec:GeometricHyp}}

Many invariants in low-dimensional topology and geometry are defined by counting solutions to an elliptic partial differential equation. Such a definition presupposes that, in good situations, the moduli spaces are compact, zero-dimensional manifolds, which it makes sense to count.  The heart of the matter is thus to overcome transversality (to make the solution spaces manifolds of known dimension) and compactness (to make the zero-dimensional solution spaces finite sets).

\subsubsection{Transversality.} A taming almost complex structure $J$ on $X$ is \emph{regular} for a particular moduli problem (e.g. of maps $u: \bP^1 \rightarrow X$ or of discs $u: (D^2,\partial D^2) \rightarrow (X,L)$ representing a fixed relative homology class which solve $\cdbar_J(u) = 0$) if the linearisation $D_u = D(\cdbar_J)_u$ of the Cauchy-Riemann or Floer equation  is surjective at every solution $u$.  In that setting, the Sard-Smale implicit function theorem shows that the moduli space of solutions $u$ is locally a smooth manifold, of dimension given by the index of the operator $D_u$.  The Sard-Smale theorem pertains to maps between Banach manifolds, so there is a routine complication: one should first extend the $\cdbar$-operator to a section of a Banach bundle $\cdbar_J: L^{k,p}(\Sigma, X) \rightarrow L^{k-1,p}(\mathcal{E})$, with $\mathcal{E}_u = \Omega^{0,1}(u^*TX)$, on a space of $L^{k,p}$ maps (if $kp>2$ the maps are continuous, so pointwise boundary conditions make sense), thereby obtain for regular $J$ a smooth moduli space of solutions which \emph{a priori} is composed of maps of low regularity, and then use ``bootstrapping" to infer that any $L^{k,p}$-solution of the equation is actually a smooth map.

There are  intrinsic obstructions to achieving ``as much transversality as you'd like".  Suppose for instance $X$ is a closed symplectic 6-manifold with $c_1(X) =0$. For any $A \in H_2(X;\bZ)$, the virtual dimension for unparametrised rational curves in class $A$ is $0$. However, if  $u: \bP^1 \rightarrow X$ represents a class $A$, then the class $kA$ is represented by any map $\bP^1 \stackrel{k:1}{\longrightarrow} \bP^1 \stackrel{u}{\longrightarrow} X$, so the moduli space of curves in class $kA$ is at least as large as the Hurwitz space of degree $k$ covers of $\bP^1$ over itself, and cannot have dimension zero.  This is symptomatic of the fact that one cannot hope to achieve transversality at multiply covered curves without perturbing the Floer or Cauchy-Riemann equation itself in a way that breaks the symmetry of the cover (for instance, one can make the almost complex structure $J$ depend on a point of the domain of the curve, so it will not be invariant under any finite symmetry).  For rational curves, a generic choice of $J$ will be regular for simple (non-multiple-cover) curves.  For discs, viewed as maps of the strip $D^2 \backslash \{\pm 1\}$ which extend smoothly over the boundary punctures, a generic path $\{J_t\}_{t\in[0,1]}$ of complex structures will again be regular for solutions which have an injective point $x$, i.e. one where $u^{-1}(u(x)) = \{x\}$. 

\begin{Example} \label{Ex:TransversalityFiniteGroup} There are also transversality issues not related to multiple covers. Suppose $X$ carries a symplectic involution $\iota$ with fixed locus $X^{\iota}$, and one has $\iota$-invariant Lagrangians $L_i$ with $L_i^{\iota} = L_i \cap X^{\iota}$ Lagrangian in $X^{\iota}$.  If for $x,y \in L_1^{\iota} \cap L_2^{\iota}$ the virtual dimension of the moduli space containing a Floer trajectory $u$ inside $X^{\iota}$  is greater than the corresponding virtual dimension of $u$ in $X$, then one cannot simultaneously have that the curve is regular in both spaces: the dense locus of regular $J$ may be disjoint from the infinite-codimension locus of $\iota$-invariant $J$. (If there are no trajectories in the fixed locus, one \emph{can} achieve equivariant transversality \cite{Khovanov-Seidel}.)
\end{Example}

\subsubsection{Compactness via bubbling.} Moduli spaces of solutions, even when regular, will typically not be compact.  Gromov compactness \cite{Gromov} produces a compactification which includes nodal curves (trees of spheres and discs).  An important point is that although the Floer equation is a deformed version of the Cauchy-Riemann equation, the bubbles arise from a rescaling process which means that they satisfy the undeformed equation $\overline{\partial}_J(u) = 0$.  In consequence, one cannot typically hope to overcome transversality problems with multiply covered bubbles by generic choices of inhomogeneous terms or domain-dependent choices of almost complex structures.  Typically, boundary strata for compactified spaces of holomorphic curves may have dimension far in excess of that of the open stratum one is trying to compactify. There are no easy general solutions to this issue (the solutions that exist \cite{FO3:technical, HWZ}, which involve perturbing moduli spaces of solutions inside ambient Banach spaces of maps rather than perturbing the equations or other geometric data themselves, go by the general name of \emph{virtual perturbation theory}).

Concretely, to define Floer cohomology, one needs to know that $d^2 = 0$ in the Floer complex.  As in Morse theory, the proof of this relation comes from studying boundaries of one-dimensional  (modulo translation) moduli spaces of solutions.  Along with the boundary strata which contribute to $d^2$, there is a problematic kind of breaking, in which a holomorphic disc with boundary on either $L$ or $L'$ bubbles off, cf. Figure \ref{Fig:Bubble}.  (The degenerate case in which the boundary values of the disc are mapped by a constant map to the Lagrangian, giving a rational curve inside $X$ which passes through $L$, is also problematic.)  
\begin{center}
\begin{figure}[ht]
\includegraphics[scale=0.4]{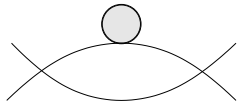}
\caption{Holomorphic disc bubbles can obstruct $d^2 = 0$ in the Floer complex.\label{Fig:Bubble}}
\end{figure}
\end{center}

There are three basic routes to avoiding the problems caused by disc bubbling.

\subsubsection{ Preclude bubbles a priori: asphericity or exactness.\label{Sec:exact}}  Here one insists $\langle \omega, \pi_2(X,L)\rangle = 0$ for all Lagrangians under consideration (which in particular implies that there are no symplectic 2-spheres in $X$).   Since bubbles are holomorphic, if non-constant they must have strictly positive area, by the basic identity 
\[
\int_C u^*\omega \ = \ \int_C \| du\|_J^2
\] 
for a $J$-holomorphic curve $u: C \rightarrow X$ with Lagrangian boundary and $J$ compatible with $\omega$. This was the setting adopted in the earliest papers in the subject, in particular by Floer himself.

For a non-compact symplectic manifold $X$, if the symplectic form $\omega = d\theta$ is globally exact and the Lagrangian $L\subset X$ is exact meaning $\theta|_L = df$ vanishes in $H^1(L)$, bubbles are immediately precluded by Stokes' theorem.  (However, $\omega^n > 0$ shows that exactness of $\omega$ is incompatible with $X$ being a closed manifold.)  Non-compactness of $X$ introduces another possible source of loss of compactness for moduli spaces, which is just that solutions might escape to infinity; this is controlled by a different mechanism, typically some version of the maximum principle, which requires imposing control on the geometry at infinity. Stein manifolds $X$ (in particular affine varieties) are modelled on a contact cone at infinity, i.e. outside a compact set they are symplectomorphic to a subset of $(Y\times \bR_+, d(e^t\alpha))$ with $(Y,\alpha)$ contact and $t\in\bR_+$.  For a holomorphic curve $u: \Sigma \rightarrow X$, the (partially defined) function $t\circ u$ is subharmonic, $\Delta(t\circ u) \geq 0$, which means the function has no interior maxima. It follows that  holomorphic curves whose boundary is contained in the interior $X \backslash (X \cap Y\times\bR_+)$ are entirely contained in the interior, so Stein manifolds are convex at infinity in the sense introduced earlier.

An argument going back to Floer  identifies holomorphic strips for a pair $L, \phi_H^t(L)$ (with $\phi_H^t$ the small-time flow of a Morse function $H$ on $L$, and for a carefully chosen time-dependent almost complex structure $J$) which stay close to $L$ with gradient flow lines of $H$. We may therefore identify $CF^*(L,L) = C^*_{Morse}(L)$ as vector spaces.  In the absence of holomorphic discs with boundary on $L$, a compactness argument shows that there are no other holomorphic strips which can contribute to the Floer differential, and hence:

\begin{Theorem}[Floer] \label{Lem:Floer}
If $\langle \omega, \pi_2(X,L) \rangle = 0$,  then $HF^*(L,L) \cong H^*(L)$. 
\end{Theorem}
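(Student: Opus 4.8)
The plan is to reduce the computation to finite-dimensional Morse theory by a degeneration argument, using the asphericity hypothesis to control compactness. First I would take as test Hamiltonian $H = \epsilon f$, where $f\colon L \to \bR$ is a Morse function and $\epsilon>0$ is small. Fixing a Weinstein tubular neighbourhood $T^*L \hookrightarrow X$ of $L$, the time-one flow $\phi^1_H$ carries the zero section $L$ to the graph of $\epsilon\, df$, which meets $L$ transversally in exactly the critical points of $f$. This identifies the generators of $CF^*(L,L)$ with those of the Morse cochain complex $C^*_{\mathrm{Morse}}(f)$, compatibly with the orientation lines $\mathrm{or}_x$ once one matches the coherent-orientation conventions with the standard ones for (un)stable manifolds.

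Second, I would pin down the differential. The energy of any Floer strip $u \in \scrM(x_-,x_+)$ equals the difference of the values of the action functional at its asymptotes, which for $H = \epsilon f$ is $O(\epsilon)$ uniformly in $u$. Combined with a monotonicity-type lower area bound for any nonconstant $J$-holomorphic disc or sphere, this forces, for $\epsilon$ sufficiently small, every isolated solution $u \in \scrM(x_-,x_+)/\bR$ to remain inside the chosen tubular neighbourhood, where (after rescaling the $\bR$-coordinate by $\epsilon$) the Floer equation becomes a small perturbation of the gradient-flow equation $\dot\gamma = \nabla f$. At this point I would invoke Floer's comparison theorem: for a compatible time-dependent $J$ adapted to the cotangent-bundle model near $L$, the isolated solutions of the Floer equation that stay near $L$ are in orientation-preserving bijection with the isolated gradient trajectories of $f$. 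Hence $d_J$ agrees with the Morse differential under the identification above.

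Third, having produced an isomorphism of complexes $CF^*(L,L) \cong C^*_{\mathrm{Morse}}(f)$ for this choice of auxiliary data, I conclude $HF^*(L,L) \cong H^*_{\mathrm{Morse}}(f) \cong H^*(L)$. That the left-hand side is independent of $H$ and $J$ is exactly the continuation-map invariance recalled earlier in this section, so the special degeneration used entails no loss of generality; and the hypothesis $\langle \omega, \pi_2(X,L)\rangle = 0$ is what guarantees $d_J^2=0$ in the first place, since a disc bubble would need strictly positive symplectic area yet the relevant relative classes carry area zero.

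The main obstacle is the analytic core of the second step: the comparison between short-time Floer strips and gradient lines. One must prove (i) a priori confinement---no sequence of isolated strips escapes the tubular neighbourhood as $\epsilon\to 0$, which combines the energy bound with Gromov compactness and the exclusion of bubbling from asphericity---and (ii) the bijection with gradient trajectories, which requires uniform elliptic estimates, a quantitative implicit function theorem on the relevant Banach manifolds, and an identification of the linearised operators and their determinant lines on the two sides. The delicate point is to choose $J$ so that it is of cotangent type near $L$ while still being regular for both the Floer and the Morse problems simultaneously; with that in hand, the remainder is bookkeeping.
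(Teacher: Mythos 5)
Your argument is exactly the one the paper sketches: identify $CF^*(L,L)$ with the Morse complex of a small Morse function via a Weinstein neighbourhood, use the $O(\epsilon)$ energy bound together with monotonicity/Gromov compactness and the asphericity hypothesis to confine isolated strips to the neighbourhood, and invoke Floer's comparison of short strips with gradient lines. The proposal is correct and takes essentially the same route, with the technical caveats (equivariant choice of $J$, the implicit-function-theorem comparison) accurately flagged.
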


This recovers Gromov's theorem \cite{Gromov} that $\bC^n$ contains no closed exact Lagrangian:  it would be displaceable, with $HF(L,L)=0$.   (Any isotopy of exact Lagrangians $\{L_t\} \subset X$ is induced by a Hamiltonian flow of $X$, in particular exact isotopies do not change Floer cohomology.)

\subsubsection{Preclude bubbles for reasons of dimension: monotonicity.\label{Sec:Monotone}}  If the possible disc and sphere bubbles sweep out subsets of $X$ of sufficiently high (co)dimension, then for \emph{generic} choices of data in the Floer equation (almost complex structure, Hamiltonian terms), the boundary values of holomorphic strips that appear in one-dimensional moduli spaces will never interact with bubbles.  The typical setting for this resolution is the case when $X$ and $L$ are \emph{monotone}, meaning that the Maslov index $\mu_L$ and area homomorphisms are positively proportional on $\pi_2(X,L)$, in particular $\omega_X$ and $c_1(X)$ are positively proportional on $\pi_2(X)$, which is a symplectic analogue of the Fano condition in algebraic geometry.  Hypersurfaces or complete intersections in $\bP^n$ of sufficiently low total degree $\leq n$ are monotone.  The minimal Chern number $N_{min}(X)>0$ of a monotone symplectic manifold is defined by the equality $\{\langle c_1(X), \pi_2(X)\rangle\} = N_{min}(X)\bZ$.  Similarly, a monotone Lagrangian $L\subset X$ has a minimal Maslov number $N_{min}(L)>0$.  Since $\mu_L \mapsto 2c_1(X)$ under $H^2(X,L)\rightarrow H^2(X)$,  $N_{min}(L) = 2N_{min}(X)$ when $\pi_1(L)=0$.

Suppose first $X$ is monotone with $N_{min}(X) \geq 2$, and $K, L\subset X$ are monotone Lagrangians each having $N_{min}\geq 4$. The locus of configurations depicted in Figure \ref{Fig:Bubble} is formally a fibre product $\scrM_{bubble} \times_L \scrM_{Floer}$ of a space of Floer strips and a space of disc bubbles, where  the fibre product is taken over the (presumed smooth and transverse) evaluation maps into $L$.  Moduli spaces of  unparametrised discs on $L$ in some class $\beta$ have virtual dimension $n+\mu_L(\beta)-3 \geq n+1$, hence after evaluation are trivial as chains on $L$, i.e. are  boundaries in $C^*_{sing}(L)$.  That means that, algebraically, degenerations with disc bubbles do not appear when computing $d^2$ in $CF^*(K,L)$.   

In the special case of monotone Lagrangians of minimal Maslov index 2, there is a slight but important difference:  Maslov 2 disks come in $(n-1)$-dimensional moduli spaces, and after evaluation at a boundary marked point disks in class $\beta$ sweep $L$ with some multiplicity $n_{\beta}$.  This leads to a distinguished ``obstruction term" 
\[
\frak{m}_0(L) = \sum_{\beta} n_{\beta} q^{\langle \omega, \beta\rangle} \in \bC[\![q]\!].
\] One can still define Floer cohomology for pairs $L, L'$ with $\frak{m}_0(L) = \frak{m}_0(L')$, since the relevant additional boundaries can then be cancelled against one another, but for monotone Lagrangians with differing $\frak{m}_0$-values one does \emph{not} have $d^2=0$ in the Floer complex. 

\begin{Example} \label{Ex:CY} If $X$ is a Calabi-Yau surface, then disc bubbles for a Maslov zero Lagrangian have negative virtual dimension $n+\mu-3 = -1$, so there are no bubbles for generic $J$ and one can define $CF^*(L,L)$ as a cochain complex. However, in one-parameter families of Lagrangians, isolated disc bubbles can appear, which means that it is not \emph{a priori} clear that the resulting Floer cohomology is Hamiltonian invariant: the continuation map equation \eqref{Eqn:Continuation} may fail to be a chain map because of bubbling.  See \cite[Section 8]{Seidel:HMSquartic}.\end{Example}

\subsubsection{Cancel bubbles: obstruction theory}  If disk bubbles appear, they introduce unwanted boundaries to one-dimensional moduli spaces which prevent $d^2=0$. One can recover the situation if one can cancel out these extra boundaries ``by hand": in Figure \ref{Fig:Bubble}, one may have the evaluation image $ev(\scrM_{bubble}) \in C^*_{sing}(L)$ being a boundary, even if not trivially so for dimension reasons. This is the origin of \emph{obstruction theory} for Floer cohomology. It does not always apply (the locus in $L$ swept by boundaries of disc bubbles may not be nullhomologous), and when it does it depends on the additional choices of bounding cochains (which complicates Hamiltonian invariance, cf. Example \ref{Ex:CY}), but it provides the most general setting in which Floer theory has been developed, notably in \cite{FO3, FO3:technical}.  The class of situations in which this strategy works is sufficiently broad to be a considerable advance over the others, but the details of the undertaking -- relying in general on virtual perturbation theory -- are somewhat fearsome.

In deference to the general situation, Lagrangians whose Floer cohomology is well-defined are said to be \emph{unobstructed}.

\subsection{Technicalities with the definition: Algebraic hypotheses\label{Sec:AlgebraicHyp}} Even once one has decided what class of Lagrangian submanifolds to allow, there are variations on the additional structures they will carry (the existence of which may impose further constraints).

\subsubsection{Choice of coefficient ring.\label{Sec:Novikov}}

Gromov compactness gives control over limits of sequences of holomorphic curves if one has a uniform area bound, but -- as in Novikov's circle-valued Morse theory -- one can still encounter infinitely many moduli spaces in a given problem (here of connecting strips between a fixed pair of intersection points). If infinitely many moduli spaces arise, then  for the sum \eqref{Eqn:FloerDifferential} to make sense one needs to keep track of homotopy classes. Thus, the most generally applicable definition of $HF^*$ involves working over a Novikov field
\begin{equation} \label{Eqn:NovikovField}
\Lambda \ = \ \left\{ \sum a_i q^{\lambda_i} \ | \ a_i \in \bK, \lambda_i \in \bR, \lambda_i \rightarrow \infty \right\}
\end{equation}
based over an underlying coefficient field $\bK$ (if $\bK$ is algebraically closed, so is $\Lambda$).  One then defines the Floer differential \eqref{Eqn:FloerDifferential} to be 
\begin{equation} \label{Eqn:FloerDifferential-Novikov}
d_J(x_+) = \sum_{\beta, x_-} n^{\beta}_{x_+, x_-} q^{\langle \omega, \beta\rangle} x_-
\end{equation}
 with $\beta$ the homotopy class of connecting bigon.   There is a subring $\Lambda_+ \subset \Lambda$ comprising the power series for which the lowest power of $q$ is $\geq 0$.
Floer cohomology is invariant under Hamiltonian isotopy only if one works over the Novikov field; nonetheless, it holds non-trivial information over $\Lambda_+$. The classification theorem for finitely generated $\Lambda_+$-modules shows that 
\[
HF(L,L') \cong \Lambda_+^r \, \bigoplus \ \oplus_{i=1}^n \Lambda_+ / \langle q^{k_i} \rangle
\]
The torsion part obviously dies when passing to $\Lambda$, whilst the free summands persist;  on the other hand, an explicit Hamiltonian isotopy can only change the coefficients $k_i$ by an amount bounded by the Hofer norm of the isotopy, which is information lost when working over the Novikov field. This is already apparent for the most trivial case of the circle $S^1 \subset \bC$.  This is Hamiltonian displaceable, so over $\Lambda$ its self-Floer cohomology vanishes, whilst over the ring $\Lambda_+$ the self-Floer cohomology is the torsion group $\Lambda_+ / \langle q^A \rangle$ where $A$ is the area enclosed by the circle.  Thus, Floer cohomology over $\Lambda_+$  remembers the (in this case elementary) fact that any displacing Hamiltonian must use energy at least $A$.

\begin{Example}
If $L, L'$ are exact with $\theta|_L = df$ and $\theta|_{L'} = df'$ then the area of any connecting holomorphic strip between intersection points $x,y \in L\pitchfork L'$ is given by $f(x)-f'(y)$. The uniform area bound and Gromov compactness imply that only finitely many homotopy classes $\beta$ can be realised by holomorphic strips, so the sum \eqref{Eqn:FloerDifferential} is finite.
\end{Example}

\subsubsection{Spin structures and twisted theories}

So far we have not mentioned the characteristic
of the field $\bK$. If this is not equal to 2, then the counts in \eqref{Eqn:FloerDifferential} or \eqref{Eqn:FloerDifferential-Novikov} need to be \emph{signed} counts of 0-dimensional moduli spaces, so those moduli spaces should be oriented.  A basic fact \cite{deSilva,FO3} is that a choice\footnote{One can get by with $Pin$, but we will stick to $Spin$.} of $Spin$ structure on $L$ induces an orientation on the moduli space $\scrM(\beta)$ of holomorphic discs on $L$ in some class $\beta$; a loop $\gamma: S^1 \rightarrow \scrM(\beta)$ induces, by taking boundary values, a map $\Gamma_{\gamma}: T^2 \rightarrow L$, and $\langle w_1(\scrM(\beta)), \gamma\rangle = \langle w_2(L), \Gamma_{\gamma}\rangle$.  By the same token, if $L$ is not $Spin$, one can't expect moduli spaces of discs or Floer trajectories to be orientable, and one should work in characteristic 2.

The existence of $Spin$ structures on the Lagrangians means that moduli spaces admit orientations. Those orientations are still not canonical, however: one can twist the sign with which any holomorphic disc counts to the Floer differential by its intersection number with an ambient codimension two cycle in $X$ disjoint from the Lagrangians, and such twists are compatible with the breaking of curves and resulting compactifications of moduli spaces which define Floer cohomology (and the operations thereon encountered later).  The upshot is that there is really a version of Floer cohomology $HF^*(L,L';\beta)$ for any choice of ``background" class $\beta \in H^2(X;\bZ_2)$, where the Lagrangians should be relatively spin in the sense that $w_2(L) + \beta|_L = 0$.

\subsubsection{Gradings}  Unlike the chain complexes of classical algebraic topology, the Floer complex is not generally $\bZ$-graded.  At least if the Lagrangian submanifolds $L, L'$ are oriented, then transverse intersection points are positive or negative, and that induces a $\bZ_2$-grading on the complex (if the Lagrangians are orientable but not oriented, this still persists as a relative $\bZ_2$-grading). One can refine this to a $\bZ/N$ grading, where $N$ is the minimal Maslov number of the Lagrangians. In particular, if $L$ and $L'$ have vanishing Maslov class (which necessitates in particular that $2c_1(X)=0$) then the Floer complex can be $\bZ$-graded.

\subsection{Applications, I} \label{Sec:Appl1} As we indicated initially, if $HF^*(L,L') \neq 0$ then $L$ and $L'$ cannot be displaced from one another by Hamiltonian isotopy.  Several of the first applications were variations on this theme: the following are taken from  \cite{Lalonde-Sikorav, FSS2, Polterovich}.

\subsubsection{Nearby Lagrangians} If $L\subset M$ is Lagrangian, $M$ is locally modelled on the cotangent bundle $(T^*L, d\theta)$ with its canonical symplectic structure.  That means the classification problem for Lagrangian embeddings is always at least as complicated as the local problem of classifying Lagrangians in the cotangent bundle. Arnol'd conjectured, cf. Question \ref{Q:Arnold}, that if $K\subset T^*L$ is a closed \emph{exact} Lagrangian it should be Hamiltonian isotopic to the zero-section. In particular
\begin{itemize}
\item  $HF^*(K,L) \cong HF^*(L,L) = H^*(L)$ should be non-zero, so $K$ should intersect the zero-section $L\subset T^*L$;
\item  $K \subset T^*L \rightarrow L$ should be a degree one map and hence onto.
\end{itemize}  

There is a rescaling action of $T^*L$ which multiplies the fibres by some constant $c$; the rescaled submanifolds $cK$ are all exact, hence Hamiltonian isotopic.   Letting $c\rightarrow \infty$, the submanifold $cK$ approximates a union of fibres $\bigcup_{x \in K \cap L} T_x^*L$.  In this schematic, holomorphic strips between $L$ and $cK$ either stay close to a single cotangent fibre, or sweep out a large piece of the cotangent bundle.  Filtering holomorphic discs by area, there is then a spectral sequence, cf. Remark \ref{Rem:Filtration}:
\begin{equation} \label{Eqn:FibresShouldGenerate}
\bigoplus_{x\in K\cap L} \, HF^*(K, T_x^*L) \ \Longrightarrow \ HF^*(K,cK) = H^*(K)
\end{equation}
(the final identity following from Theorem \ref{Lem:Floer}). 
One concludes $L\cap K \neq \emptyset$, and furthermore that $HF^*(K, T_x^*L) \neq 0$ for $x\in L\cap K$.  However, the group $HF^*(K, T_x^*L)$ is independent of the choice of $x$ by a suitable continuation argument, which gives the second part of the second statement: projection $K \rightarrow L$ is onto. This argument is \emph{not} sufficient as it stands to conclude that the projection map has degree 1; we will derive that later, in Section \ref{Sec:NearbyLag}.
 
 \subsubsection{Mapping class groups}  Recall from Section \ref{Sec:AlgFamily} the quadratic map $\pi: \bC^{n+1} \rightarrow \bC$, $(z_1,\ldots,z_{n+1}) \mapsto \sum z_j^2$. The generic fibre $\pi^{-1}(1) \subset (\bC^{n+1}, \omega_{\st})$ is symplectomorphic to $(T^*S^n, d\theta)$. The monodromy of the fibration is isotopic to a compactly supported symplectomorphism, the \emph{Dehn twist} $\tau \in \pi_0\Symp_{ct}(T^*S^n)$, which can be defined more directly as follows. The geodesic flow for the round metric on $S^n$ is periodic with period $\pi$.    Take a Hamiltonian function $H:T^*S^n \rightarrow \bR$ which is a function of $r=|p|$, the norm of the fibre co-ordinate, and which vanishes for $|p| \ll 1$  and has slope $\pi$ for $|p| \geq 1$.    Then one defines $\tau$ to be the (by construction compactly supported) symplectomorphism of $T^*S^n$ given by the composition (antipodal)$\circ \phi_H^1$.  
 
 We claim that $\tau$ is an element of infinite order in $\pi_0\Symp_{ct}(T^*S^n)$, for $n\geq 1$.  (For odd $n$ this is elementary, following from homology considerations, but in even dimensions the map has finite order in the smooth mapping class group.)  
   Consider the Floer cohomology $HF^*(\tau^m (T_x^*), T_y^*)$, with $x, y$ distinct but not antipodal.  Although these are non-compact Lagrangians, the intersections live in a compact set, and the Lagrangians are exact and modelled on Legendrian cones at infinity, which means that $HF$ is defined unproblematically (as an alternative, one could cap off the relevant two cotangent fibres to obtain a space containing an $A_3$-chain of Lagrangian spheres; compare to Section \ref{Sec:MCGIII}).  Moreover, since $c_1(T^*S^n) = 0$ and the Lagrangians have trivial Maslov class, $HF$ is also $\bZ$-graded.  The Floer complex generators correspond to geodesics which wrap at most $m$ times around the appropriate great circle. The $\bZ$-grading in the Floer complex reproduces the Morse grading of the geodesics as critical points of the energy function on loop space (determined by numbers of Jacobi fields), and one finds \cite{FrauSchlenk} that $CF^*(\tau^m (T_x^*), T_y^*)$ is concentrated in degrees
 \[
 (2n-2) \{0,1,...,(m-1)\}, \ \mathrm{and} \  (3n-3)\{0,1,...,(m-1)\}.
 \]
 If $n \geq 3$,  the groups are never non-zero in adjacent degrees, the Floer differential vanishes, and the rank of the Floer group grows with $m$.   The result also holds when  $n=2$ but an extra step is needed: for instance, one can work equivariantly with respect to a symplectic involution to reduce to the easier case of $T^*S^1$ (in this fixed locus there are no Floer differentials for homotopy reasons, meaning one can achieve equivariant transversality, cf. Example \ref{Ex:TransversalityFiniteGroup}).

\subsection{Algebraic structures} Floer cochain groups have a wealth of additional structure; we begin with the (chain-level) constructions which do descend to structures in cohomology.

\subsubsection{Product structures}
Given three Lagrangians $L,M,N \subset X$, there is a degree zero product 
\[
\mu^2: HF^*(M,N) \otimes HF^*(L,M) \longrightarrow HF^*(L,N)
\]
via the holomorphic triangle product schematically indicated below.
\begin{center}
\includegraphics[scale=0.4]{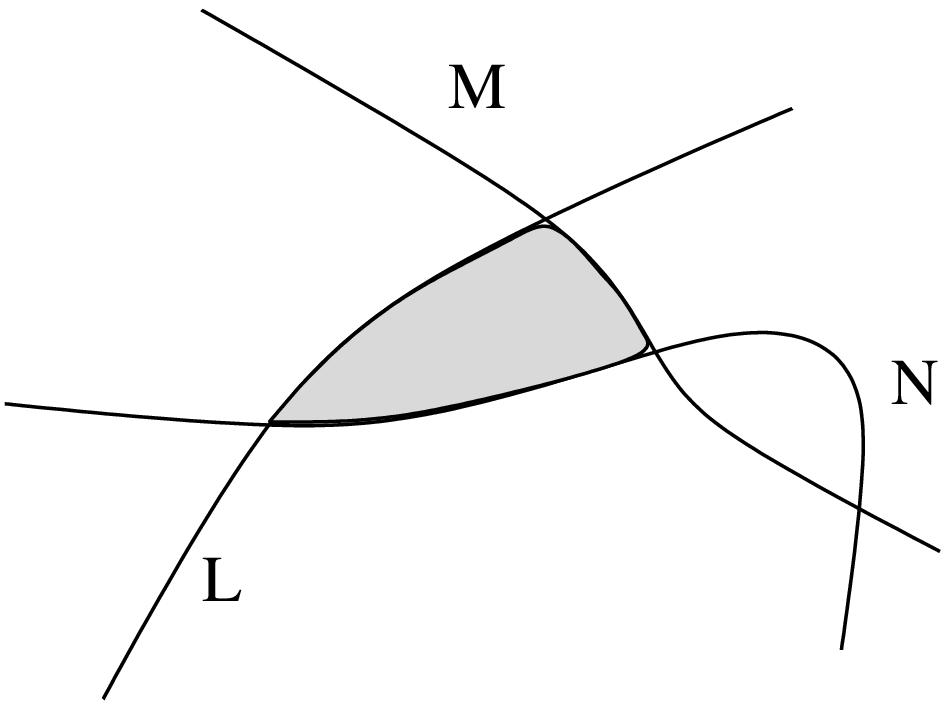}
\end{center}
The figure indicates that given input (presumed transverse) intersection points in $L\cap M$ and $M \cap N$, the coefficient of some given output point in $L\cap N$ is given by a count of triangles as indicated, subject to the usual caveats: one counts elements of zero-dimensional moduli spaces, under suitable geometric hypotheses to ensure compactness, and taking Novikov coefficients if infinitely many distinct moduli spaces can in principle contribute.  
\begin{center}
\begin{figure}[ht]
\includegraphics[scale=0.45]{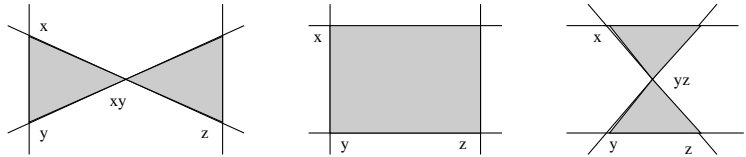}
\caption{The triangle product is associative on cohomology.\label{Fig:BrokenTriangles}}
\end{figure}
\end{center}

The fact that the chain-level operation descends to cohomology reflects the fact that, modulo bubbling, the boundary of a one-dimensional moduli space of such triangles has strata given by  broken configurations which involve breaking ``at the corners", and count the coefficient in a fixed output intersection point  of  the expressions $\mu^2(d(x),y)$, $\mu^2(x,d(y))$ and $d\mu^2(x,y)$, with $d$ the Floer differential.  Similarly, the fact that one-dimensional moduli spaces of quadrilaterals can break as shown in Figure \ref{Fig:BrokenTriangles} shows that the triangle product is associative on cohomology: if all the inputs are Floer cycles, $d(\cdot) = 0$, only the two ``concatenated triangle" pictures define non-trivial strata to the boundary of the one-dimensional moduli space, and those correspond to the coefficient of the top right  intersection point in each of $\mu^2(x,\mu^2(y,z))$ and $\mu^2(\mu^2(x,y),z)$.

It follows that for a single Lagrangian $L$, $HF^*(L,L)$ is naturally a ring, which moreover is unital.  The ring $HF^*(L,L)$ has Poincar\'e duality, arising from reversing the roles of the two boundary conditions for holomorphic strips. Indeed, there is a trace $\tr: HF^{n}(L,L) \rightarrow \bK$ which, together with the triangle product, gives rise to a non-degenerate pairing
\[
HF^*(K,L) \otimes HF^{n-*}(L,K) \stackrel{\tr}{\longrightarrow} \bK.
\]
 The following Proposition is essentially due to Oh \cite{Oh:Thomas} (the compatibility with the ring structure was made explicit in \cite{Buhovski:ring}), see also \cite{Albers,BiranCornea}. It provides a key computational tool in the subject.

\begin{Proposition}\label{Lem:Oh}
There is a  spectral sequence $H^*(L;\Lambda) \Rightarrow HF^*(L,L)$ which  is compatible with the ring structures in ordinary and Floer cohomology. 

\end{Proposition}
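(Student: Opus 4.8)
The plan is to realise $CF^*(L,L)$ by a chain-level model whose underlying complex is the Morse complex of $L$ with Novikov coefficients, to filter by symplectic area, and to read off the spectral sequence and its multiplicativity from the known leading ``zero-energy'' terms. Concretely, I would fix a Morse function $f$ and a generic metric on $L$ and pass to the Morse--Bott (``pearl'') model of $CF^*(L,L)$ --- that of Biran--Cornea in the monotone case, or of Fukaya--Oh--Ohta--Ono in the general unobstructed case --- whose generators are the critical points of $f$, so that the underlying $\Lambda$-module is $C^*_{\mathrm{Morse}}(L)\otimes\Lambda$. The key geometric input, a Morse--Bott refinement of the argument behind Theorem~\ref{Lem:Floer}, is that in this model the differential has the shape $\partial=\partial_0+\sum_{\lambda>0}q^{\lambda}\partial_{\lambda}$, with $\partial_0$ the \emph{classical} Morse differential of $f$ (the zero-energy contributions: gradient trajectories staying near $L$, no disc bubbles), and each $\partial_{\lambda}$ with $\lambda>0$ counting pearly configurations --- broken gradient segments punctuated by honest $J$-holomorphic discs of total area $\lambda$.

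Next I would equip $CF^*(L,L)$ with the $q$-adic (equivalently, area) filtration $F^{p}CF^*(L,L)$; since areas add under gluing of pearls, $\partial$ preserves it, and the associated graded complex is exactly $(C^*_{\mathrm{Morse}}(L)\otimes\Lambda,\partial_0)$, whose cohomology is $H^*(L;\Lambda)$. Hence the spectral sequence of the filtered complex has $E_1\cong H^*(L;\Lambda)$ (after the customary re-indexing, $E_2$) and abuts to $HF^*(L,L)$. For convergence one notes the filtration is exhaustive, $\Lambda$ is $q$-adically complete, and $C^*_{\mathrm{Morse}}(L)$ is finite-dimensional over $\bK$ in each degree, so the spectral sequence converges strongly.

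For compatibility with the ring structures I would use that the same pearl model carries a product refining the triangle product $\mu^2$: count trees of gradient segments with holomorphic-disc insertions having two inputs and one output. Its zero-energy part is Fukaya's Morse-tree description of the cup product, so the product induced on $E_1=H^*(L;\Lambda)$ is the ordinary cup product; the filtration is by total area, which is again additive under concatenation of such trees, so the product is filtered, the spectral sequence multiplicative, and its limiting product on $HF^*(L,L)$ is the Floer product. The chain-level Leibniz relation one needs here is precisely the boundary-degeneration analysis sketched around Figure~\ref{Fig:BrokenTriangles}.

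The main obstacle is the first step: producing the pearl/Morse--Bott model with the stated leading terms. This requires (a) transversality for the pearl moduli spaces within the perturbation class permitted by the geometric hypotheses of Section~\ref{Sec:GeometricHyp} --- so that the relevant discs are regular and the obstruction phenomena of Section~\ref{Sec:Monotone} do not intrude, which on the ``diagonal'' pair $(L,L)$ they do not --- and (b) the gluing-and-compactness argument identifying the $q\to 0$ degeneration of the Floer data with genuine Morse theory on $L$, a continuation-type argument of the flavour used for Theorem~\ref{Lem:Floer} but carried out in a Morse--Bott family. Everything after that is the standard homological algebra of multiplicative filtered complexes.
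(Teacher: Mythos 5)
Your proposal follows essentially the same route as the paper's sketch: filter the Floer differential by action/area so that the zero-energy part is the Morse differential (as in Theorem \ref{Lem:Floer}), realise this concretely via the Biran--Cornea pearl model with $\partial_0+\partial_1+\cdots$ graded by total Maslov index/area of the disc insertions, and obtain multiplicativity from the $Y$-shaped graphs of pearls refining Fukaya's Morse-tree cup product. Your added remarks on convergence and on the transversality/gluing issues for the pearl moduli spaces are exactly the points the paper's sketch leaves implicit, so this is a faithful (and slightly more careful) version of the same argument.
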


\begin{proof}[Sketch]
Fix a Morse function $f: L \rightarrow \bR$. As in Theorem \ref{Lem:Floer}, for a suitable $J$ Morse flow-lines define Floer trajectories. The spectral sequence arises by filtering the Floer differential by action, such that all low-energy solutions arise from Morse trajectories.  In the monotone case, where action and index correlate,  the spectral sequence arises from writing the Floer differential as a sum $\partial_0 + \partial_1 + \cdots + \partial_k$ where $\partial_0$ is the Morse differential in $C^*_{Morse}(L)$ and $\partial_i$ has degree $1-iN_L$, where $N_L$ is the minimal Maslov number of $L$.  More vividly, there is a Morse-Bott (or ``pearl") model for Floer cohomology \cite{BiranCornea}, in which the Floer differential counts gradient lines of $f$ interrupted with holomorphic disks with boundary on $L$, cf. Figure \ref{Fig:PearlyGradient}, and $\partial_i$ considers configurations where the total Maslov index of the discs is $iN_L$.  The Morse product counts $Y$-trees of flow-lines, and this has a generalisation to $Y$-shaped graphs of pearls, which gives the compatibility of the spectral sequence with ring structures. \end{proof}
\begin{center}
\begin{figure}[ht]
\includegraphics[scale=0.4]{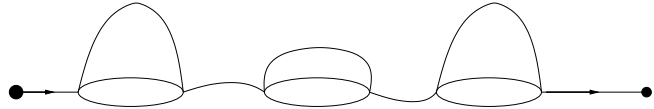}
\caption{A Morse-Bott or ``pearly" Floer trajectory\label{Fig:PearlyGradient}}
\end{figure}
\end{center}

\begin{Example}
The ring $HF^*(L,L)$ is \emph{not} always skew-commutative. 
Let $L = S^1_{eq} \subset \bP^1$; then $HF^*(L,L) \cong \bC[x]/(x^2=1)$ has  $\mu^2(x,x)=1_L$ with $|x|=1$ the top degree generator. Taking a pearl model, this product is defined by evaluation at the output of  discs $(D^2, \partial D^2) \rightarrow (\bP^1, S^1_{eq})$  with two input marked points which are fixed generic points on the equator. The 3 boundary marked points have a fixed cyclic order, which leads to two possibilities: if the image disc is the upper hemisphere, evaluation sweeps one chain on $S^1_{eq}$, whilst if it is the lower hemisphere it sweeps the complementary chain. Together these make up the fundamental class of $L$.
\end{Example}

\begin{Example} \label{Ex:ToricFibre} Let $X$ be monotone and $L\subset X$ a monotone Lagrangian torus of minimal Maslov number $\mu_L$. The  differential $\partial_1$ in the spectral sequence $H^*(L) \Rightarrow HF^*(L,L)$ is a map
\begin{equation} \label{Eqn:ToricDifferential}
H^1(L) \longrightarrow H^{2-\mu_L}(L).
\end{equation}
If $\mu_L=2$ and this is non-zero then the unit in $H^0(L)$ becomes a boundary and vanishes in $HF^*(L,L)$, which implies the latter group is identically zero. If \eqref{Eqn:ToricDifferential} vanishes then $H^1(L) \subset \ker(\partial_1)$, and since that kernel is a subring, $H^*(L) \subset \ker(\partial_1)$ and $\partial_1 \equiv 0$.  Since the higher $\partial_i$ have degree $1-2i \leq -2$, the same argument shows that $H^1(L)$, whence $H^*(L)$, lies in the kernel of every $\partial_i$, which  implies that the spectral sequence degenerates.  Therefore, if $L\subset X$ is a \emph{displaceable} monotone torus (e.g. one in $\bC^n$), then $\mu_L = 2$  since $HF(L,L)=0$.
\end{Example}

There is a category Don$(X)$, the Donaldson category, whose objects are (unobstructed) Lagrangian submanifolds, and whose morphisms are Floer cohomology groups. The composition is given by the holomorphic triangle product.  If $X$ is monotone, this category splits up into orthogonal subcategories
\begin{equation} \label{Eqn:DonSplits}
\mathrm{Don}(X) = \bigoplus_{\lambda} \mathrm{Don}(X;\lambda)
\end{equation}
where $\lambda$ varies through the possible values $\frak{m}_0(L)$, since $HF^*(L,L')$ is only well-defined for pairs of Lagrangians when $\frak{m}_0(L) = \frak{m}_0(L')$. Since $\frak{m}_0(L)$ counts Maslov 2 discs, if $X$ has minimal Chern number $>1$ then any simply-connected Lagrangian necessarily lies in  Don$(X;0)$.

\subsubsection{Module structures}\label{Subsec:ModuleStructures}
The Floer cohomology $HF^*(L, L')$ is a right module for the unital ring $HF^*(L,L)$.   There are also module actions over quantum cohomology, so for any (orientable) Lagrangian $L$ there is a unital ring homomorphism 
\begin{equation} \label{Eqn:ClosedOpen}
\mathcal{CO}^0: QH^*(X) \longrightarrow HF^*(L,L)
\end{equation}
 called (the length zero part of) the closed-open  string map. This is defined by counting holomorphic disks with boundary on $L$, an interior input marked point constrained on a cycle in $X$ representing a $QH^*$-generator, and a boundary marked point at which we evaluate to define the output Floer class. $QH^*(X)$ is commutative, and $\mathcal{CO}^0$ has image in the centre of $HF^*(L,L)$.

\begin{Example} The quantum cohomology of $\bC\bP^n$ is $\Lambda[x]/(x^{n+1}=1)$, with $|x|=2$.  Suppose $L\subset \bC\bP^n$ is a Lagrangian submanifold. If $HF^*(L,L)$ is well-defined, since $x$ maps in $HF^*(L,L)$ to an invertible element, the latter group must be 2-periodic. 
\end{Example}

A beautiful observation of Auroux, Kontsevich and Seidel \cite{Auroux}  is that in the monotone case, and working away from characteristic 2,  the map \eqref{Eqn:ClosedOpen} takes $c_1(X) \mapsto \frak{m}_0(L) e_L$, which implies that the values $\lambda$ occuring in \eqref{Eqn:DonSplits} are exactly the eigenvalues of the endomorphism $\ast c_1(X)$ acting on $QH^*(X)$.  See Remark \ref{Rem:StarC1} for a brief discussion.

\subsubsection{Exact triangles\label{Subsec:ExactTriDon}} Let's say, provisionally, that a triple of Lagrangian submanifolds and morphisms 
\[
\xymatrix{ L \ar^{a}[r] & L' \ar^{b}[r] & \ar@/^1.5pc/[ll]_{[1]} L''}
\]
  form an \emph{exact triangle} in Don$(X)$ if for every other Lagrangian $K$, composition with the given morphisms (Floer cocycles) induces a long exact sequence of Floer cohomology groups
\[
\cdots  \rightarrow HF^i(K,L) \stackrel{a}{\longrightarrow} HF^i(K,L')\stackrel{b}{\longrightarrow} HF^i(K,L'') \rightarrow HF^{i+1}(K,L) \rightarrow \cdots.
 \]
 Perhaps surprisingly, exact triangles exist:

\hspace{1em} $\bullet$ If $L \pitchfork L' = \{p\}$ meet transversely in a single point, and $L'' = L\# L'$ is the Lagrange surgery \cite{Polterovich} (which is topologically the connect sum, replacing a neighbourhood $\bR^n \cup i\bR^n$ of the double point by a Lagrangian one-handle $S^{n-1} \times \bR$), then \cite{BiranCornea2, FO3}  there is an exact triangle $L \stackrel{p}{\longrightarrow} L' \longrightarrow L''$.  In Figure \ref{Fig:SurgeryTriangle}, the intersection point of the straight Lagrangians $L, L'$ is resolved going from left to right; the dotted arc is a third Lagrangian $K$; and one sees two holomorphic discs contributing to the complex $CF^*(K, L\#L')$ on the right, one coming from  the complex $CF^*(K,L)$, and one coming from multiplication with $p$ viewed as a map $CF(K,L) \rightarrow CF(K,L')$, exhibiting schematically the  quasi-isomorphism 
\[
CF(K,L\#L') \stackrel{\sim}{\longrightarrow} \left( CF(K,L) \oplus CF(K,L'), \left(\begin{array}{cc} \partial_L & 0 \\ \mu^2(p,\cdot) & \partial_{L'} \end{array} \right)\right)
\]
(in higher dimensions, the corresponding injection from the set of rigid holomorphic triangles to rigid  discs with boundary on $K$ and $L\#L'$ is non-trivial).
\begin{center}
\begin{figure}[ht]
\includegraphics[scale=0.4]{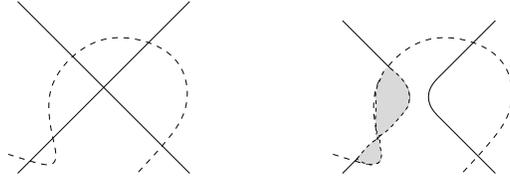}
\caption{Towards the surgery exact triangle\label{Fig:SurgeryTriangle}}
\end{figure}
\end{center}

\newcommand{\ev}{\mathrm{ev}}
\hspace{1em} $\bullet$ If $V\cong S^n \subset X$ is a (monotone or exact) Lagrangian sphere, and $\tau_V$ denotes the Dehn twist in $V$, then \cite{Seidel:LES} there is an exact triangle  $L \stackrel{\phi}{\longrightarrow} \tau_V(L) \rightarrow HF^*(V,L)\otimes V$ in the additive enlargement of Don$(X)$, in which we formally allow direct sums of objects, or equivalently tensor products of Lagrangians by graded vector spaces.  Fix any other Lagrangian $K$. One starts by identifying intersection points of $K \cap \tau_V(L)$ with points of $K \cap L$ (which we assume lie outside the support of $\tau_V$) together with a copy of $K \cap V$ for each point of $V \cap L$, as $\tau_V$ wraps $L$ around $V$ each time $L$ meets $V$. That gives 
$CF(K,\tau_V(L))  = CF(K,L) \oplus (CF(V,L)\otimes CF(K,V))$ as vector spaces. The morphism $\phi$ counts sections of a Lefschetz fibration over $D^2\backslash \{-1\}$ with one critical point at $0$ (where the vanishing cycle $V$ has been collapsed to a point) and boundary conditions $L, \tau_V(L)$.  The required nullhomotopy 
\begin{equation} \label{Eqn:Nullhtpy}
\kappa: CF(V,L) \rightarrow CF(V,\tau_V(L)), \quad \partial \kappa \pm \kappa \partial + \mu^2(\phi, \cdot) = 0
\end{equation}
which yields the LES is obtained from a 1-parameter family of Lefschetz fibrations as in Figure \ref{Fig:LESnullhtpy}. On the left one sees the composite $\mu^2(\phi,\cdot)$; the final fibration on the right of the Figure admits no rigid sections, because there are \emph{no} such sections of the model Lefschetz fibration $\bC^{n+1} \rightarrow \bC$ with boundary condition the vanishing cycle $V$ which has  bubbled off, since all moduli spaces of sections of the model have odd virtual dimension.  Thus if $\kappa$ counts rigid curves in the whole family, it yields \eqref{Eqn:Nullhtpy}.  Viewing $\kappa$ as a map $CF(V,L)\otimes V \rightarrow \tau_V(L)$, and letting $\ev: CF(V,L) \otimes V \rightarrow L$ be the tautological evaluation, \eqref{Eqn:Nullhtpy} shows $\mu^2(\phi,\ev)$ is nullhomotopic via $\kappa$, and identifies Cone$(\ev)$ with $\tau_V(L)$ as required.
\begin{center}
\begin{figure}[ht]
\includegraphics[scale=0.4]{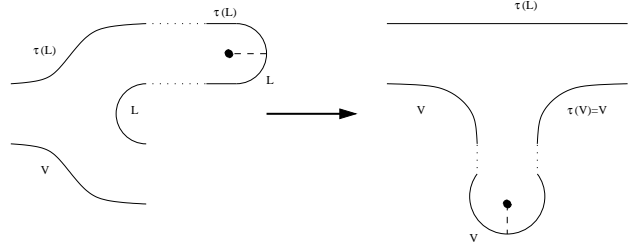}
\caption{Towards the Dehn twist exact triangle\label{Fig:LESnullhtpy}}
\end{figure}
\end{center}

\subsubsection{Functors from correspondences}\label{Sec:Quilt}  Let Don$^{\#}(Y)$ denote the category of generalised Lagrangian submanifolds of $Y$, whose objects are sequences of Lagrangian correspondences
\[
\{pt\} \stackrel{L_0}{\longrightarrow} Y_0 \stackrel{L_{01}}{\longrightarrow} \cdots \stackrel{L_{k-1,k}}{\longrightarrow} Y_k = Y
\]
with $L_{i,i+1} \subset Y_i^- \times Y_{i+1}$ a Lagrangian submanifold.  Here $(Y,\omega_Y)^- = (Y, -\omega_Y)$. There is an (essentially formal) extension of Floer cohomology to pairs of generalised Lagrangians, due to Wehrheim and Woodward \cite{WW1, WW2}. For instance, given $L \subset X$, $K\subset Y$ and $\Gamma \subset Y^- \times X$, $HF(L, (\{pt\} \stackrel{K}{\longrightarrow} Y \stackrel{\Gamma}{\longrightarrow} X))$ is by definition  $HF(K \times L, \Gamma)$, computed in $Y^- \times X$. There is a more involved extension of the Floer product  to generalised Lagrangians, involving counts of ``quilted" Riemann surfaces (ones with interior seams labelled by Lagrangian boundary conditions).   Let $(X,\omega_X)$ and $(Y,\omega_Y)$ be symplectic manifolds and $\Gamma \subset X^-\times Y$ a Lagrangian correspondence, where $X^-$ denotes $(X,-\omega)$. Under strong monotonicity hypotheses, $\Gamma$ induces a functor
\[
\Phi_{\Gamma}: \mathrm{Don}(X) \longrightarrow \mathrm{Don}^{\#}(Y),  \qquad  \Phi_{\Gamma}: L \, \mapsto \,  \left( \{pt\} \stackrel{L}{\longrightarrow} X \stackrel{\Gamma}{\longrightarrow} Y\right)
\]
which on objects acts by the indicated concatenation.
An important and difficult theorem due to Wehrheim and Woodward is that if the geometric composition 
\[
\Gamma \circ L \ = \ \left\{ u \in Y \ \big| \ (x,u) \in \Gamma \ \mathrm{for \ some} \ x\in L\, \right\}
\]
is an embedded Lagrangian submanifold, then $\Phi_{\Gamma}(L)$ and $\Gamma\circ L$ are isomorphic in Don$^{\#}(Y)$ (in particular, the formal composition is isomorphic to an object of the usual Donaldson category). 
There is an underlying functor, built using quilt theory, 
\begin{equation} \label{Eqn:QuiltFunctor}
\Phi: \mathrm{Don}(X^- \times Y) \longrightarrow \mathrm{Fun}(\mathrm{Don}(X), \mathrm{Don}^{\#}(Y))
\end{equation}
where $\Phi: \Gamma \mapsto \Phi_{\Gamma}$. In particular, if $HF(\Gamma,\Gamma) = 0$ (computed in $X^-\times Y$) then $\Phi_{\Gamma}$ is the zero functor, whilst if $\Phi_{\Gamma}(L) \neq 0$ then $\Gamma \neq 0$.  

\begin{Example} \label{Ex:Corr} cf. \cite{WW2}.
Let $S^3 \subset \bC^2$ denote the unit sphere. The Hopf map $h: S^3 \rightarrow S^2$ has $(\omega_{\st})|_{S^3} = h^*(\omega_{\operatorname{FS}})$. The graph of the composition $(\mathrm{antipodal})\circ h$ defines a Lagrangian submanifold of $B^4(r) \times \bP^1$, for any $r>1$, and hence a Lagrangian submanifold $\Gamma=S^3 \subset \bP^2 \times \bP^1$ with the monotone symplectic form.   View $\Gamma$ as a functor from $\mathrm{Don}(\bP^1)$ to $\mathrm{Don}^{\#}(\bP^2)$.  The image $\Phi_{\Gamma}(S^1_{eq})$ is quasi-isomorphic to the Clifford torus in $\bP^2$. This has non-trivial self-Floer cohomology as an instance of Example \ref{Ex:ToricFibre}, which implies $HF(\Gamma,\Gamma) \neq 0$. 
\end{Example}

At first sight the passage to Don$^{\#}(Y)$ is slightly intimidating, since its definition involves arbitrary sequences of intermediate symplectic manifolds, rendering it apparently unmanageable.  However, given a finite collection of Lagrangians $\{L_i\} \subset Y$ and associated algebra $A = \oplus_{i,j} HF^*(L_i, L_j)$, there is a functor Don$^{\#}(Y) \rightarrow \mathrm{mod}$-$(A)$, which takes any generalised Lagrangian to the direct sum of its Floer cohomologies with the $L_i$, and  this enables one to bring tools from non-commutative algebra to bear on the extended category.

\subsection{Applications, II} We briefly illustrate how the algebraic structures introduced so far increase the applicability of the theory; the examples come from \cite{Buhovsky} and \cite{Smith:HFQuadrics}.

\subsubsection{Nearby Lagrangians}  Let $L \subset T^*S^{2k+1}$ be an exact Lagrangian with $H_1(L;\bZ) = 0$. We claim that $H^*(L) \cong H^*(S^{2k+1})$.  As in Example \ref{Ex:Corr}, the Hopf map
\[
\bC^{k+1}\backslash \{0\} \supset S^{2k+1} \stackrel{h}{\longrightarrow} \bP^k
\]
given by projectivising has the feature that $h^*\omega_{\operatorname{FS}} = (\omega_{\st})|_{S^{2k+1}}$. Therefore, taking the graph, there is a Lagrangian embedding $S^{2k+1} \subset \bC^{k+1} \times (\bP^k)^-$.  Rescaling $L\subset T^*S^{2k+1}$ to lie near the 0-section if necessary, $L$ also embeds in this product.  Since by hypothesis $H_1(L;\bZ) = 0$, the embedding is monotone, hence $HF(L,L)$ is well-defined, computing in the product $\bC^{k+1} \times (\bP^k)^-$. By translations in the first factor it is obviously displaceable off itself, so $HF(L,L)=0$. 

Now consider the Oh spectral sequence, Proposition \ref{Lem:Oh}.  $L$ has vanishing Maslov class in $T^*S^{2k+1}$, but  has minimal Maslov number $2k+2$ in $\bC^{k+1} \times (\bP^k)^-$. That means the only non-trivial differential in the Oh spectral sequence has the form
\[
H^{i+2k+1}(L) \rightarrow H^i(L) \rightarrow H^{i-2k-1}(L) \qquad \forall \ 0 \leq i \leq 2k+1.
\]
The desired conclusion is an immediate consequence.

\subsubsection{Mapping class groups}   Let $S$ be a closed surface of genus 2, $p
\in S$, and $\scrM(S)$ the moduli space of conjugacy classes of representations of $\pi_1(S\backslash \{p\})$ into $SU(2)$ which take the boundary loop to $-I$. This is symplectic, being a  quotient of the infinite-dimensional space of connections  in a rank two bundle $E \rightarrow S$, which has a natural symplectic form $\omega(a,b) = \int_S \tr(a \wedge b)$ for $a,b \in \Omega^1(S;\End(E))$.  The  mapping class group $\Gamma(S,p)$ of diffeomorphisms of $S$ fixing $p$ acts symplectically on $\scrM(S)$, and we claim that for any $\phi \in \Gamma(S,p)$, $HF^*(\phi) \neq \{0\}$.  

The heart of the proof is the following inductive argument.  A homologically essential  simple closed curve $\gamma \subset S$ defines a Lagrangian 3-sphere $SU(2) \cong L_\gamma \subset \scrM(S)$ by taking the connections with trivial holonomy along $\gamma$. The Dehn twist $\tau_\gamma \in \Gamma(S,p)$ acts on $\scrM(S)$ by the higher-dimensional Dehn twist $\tau_{L_{\gamma}}$.  The space $\scrM(S)$ is monotone with minimal Chern number 2, and Spec$(\ast c_1: QH^*(\scrM(S)) \rightarrow QH^*(\scrM(S)))$ is $\{0, \pm 4i\}$.  There are exact triangles
\begin{equation} \label{Eqn:InductiveTriangle}
\xymatrix{ HF^*(\phi) \ar[r] & HF^*(\tau_{L_\gamma} \circ \phi)  \ar[r] & \ar@/^1.5pc/[ll]^{[1]} HF^*(L_{\gamma}, \phi(L_{\gamma}))}
\end{equation}
for any $\phi \in \Gamma(S,p)$ and essential $\gamma \subset S$.  Consider the corresponding exact triangle of generalised eigenspaces for the eigenvalue $4i$.  Since $c_1 \mapsto \frak{m}_0(L_{\gamma}) \in HF^*(L_{\gamma}, L_{\gamma}) = A$, and $L_{\gamma}$ has minimal Maslov number $>2$, $c_1$ acts trivially on $A$, and hence on the $A$-module $HF^*(L_{\gamma}, \phi(L_{\gamma}))$. It follows that the $\ast c_1$-generalised eigenspace for the eigenvalue $4i$ is unchanged by composition with $\tau_{L_{\gamma}}$, and an analogous argument shows it is unchanged by composing with an inverse Dehn twist.  Since such twists generate the mapping class group, inductively it suffices to prove that the relevant eigenspace is non-zero for $HF^*(\id)\cong QH^*(\scrM(S))$. A theorem of Newstead \cite{Newstead} identifies $\scrM(S)$ with an intersection of two quadric hypersurfaces in $\bP^5$, from which the computation of $QH^*$ is straightforward \cite{SKD}: the $4i$-generalised eigenspace has rank 1.

A corollary is that if $Y \rightarrow S^1$ is any 3-manifold fibred by genus 2 curves, then $\pi_1(Y)$ admits a non-abelian $SO(3)$-representation.

\subsection{Shortcoming} \label{Subsec:Shortcoming} The following situation often arises: a given symplectic manifold $X$ contains some finite collection of ``distinguished" Lagrangian submanifolds, which one expects to generate all Lagrangians by iteratedly forming cones of morphisms (e.g. by  Lagrange surgeries, or taking images under Dehn twists).   The finite collection of Lagrangians $\{L_i\}$ define a ring 
\[
A = \oplus_{i,j} HF^*(L_i, L_j)
\]
and any other Lagrangian $K$ defines a right $A$-module
\[
Y_K = \oplus_i HF^*(L_i, K).
\]
Now given two Lagrangians $K, K'$ there are two obvious groups one may consider: the geometrically defined Floer cohomology $HF^*(K,K')$, and the algebraically defined group of morphisms $\Ext_{A-mod}^*(Y_K, Y_{K'})$.  With the structure developed thus far, even if the intuition that the $\{L_i\}$ should ``generate" is correct, these two groups will typically disagree.

\begin{center}
\begin{figure}[ht]
\setlength{\unitlength}{1cm}
\begin{picture}(5,2)(0,-1)
\qbezier[200](0,0)(0,1.2)(1.5,1)
\qbezier[200](1.5,1)(2.5,0.85)(3.5,1)
\qbezier[200](3.5,1)(5,1.2)(5,0)
\qbezier[200](0,0)(0,-1.2)(1.5,-1)
\qbezier[200](1.5,-1)(2.5,-0.85)(3.5,-1)
\qbezier[200](3.5,-1)(5,-1.2)(5,0)
\qbezier[60](1,0)(1,0.3)(1.5,0.3)
\qbezier[60](2,0)(2,0.3)(1.5,0.3)
\qbezier[60](1,0)(1,-0.3)(1.5,-0.3)
\qbezier[60](2,0)(2,-0.3)(1.5,-0.3)
\qbezier[60](3,0)(3,0.3)(3.5,0.3)
\qbezier[60](4,0)(4,0.3)(3.5,0.3)
\qbezier[60](3,0)(3,-0.3)(3.5,-0.3)
\qbezier[60](4,0)(4,-0.3)(3.5,-0.3)
\put(1.5,0){\circle{1.1}}
\put(3.5,0){\circle{1.1}}
\qbezier[60](0,0)(0,-0.1)(0.5,-0.1)
\qbezier[60](1,0)(1,-0.1)(0.5,-0.1)
\qbezier[20](0,0)(0,0.1)(0.5,0.1)
\qbezier[20](1,0)(1,0.1)(0.5,0.1)
\qbezier[60](2,0)(2,-0.1)(2.5,-0.1)
\qbezier[60](3,0)(3,-0.1)(2.5,-0.1)
\qbezier[20](2,0)(2,0.1)(2.5,0.1)
\qbezier[20](3,0)(3,0.1)(2.5,0.1)
\qbezier[60](4,0)(4,-0.1)(4.5,-0.1)
\qbezier[60](5,0)(5,-0.1)(4.5,-0.1)
\qbezier[20](4,0)(4,0.1)(4.5,0.1)
\qbezier[20](5,0)(5,0.1)(4.5,0.1)
\qbezier[100](2.5,0.91)(2.6,0.91)(2.6,0)
\qbezier[100](2.5,-0.91)(2.6,-0.91)(2.6,0)
\qbezier[30](2.5,0.91)(2.4,0.91)(2.4,0)
\qbezier[30](2.5,-0.91)(2.4,-0.91)(2.4,0)
\put(-0.35,0){$\zeta_1$}
\put(0.8,0.5){$\zeta_2$}
\put(2.1,-0.35){$\zeta_3$}
\put(3.9,0.5){$\zeta_4$}
\put(5.1,0){$\zeta_5$}
\put(2.65,0.7){$\sigma$}
\end{picture}
\caption{A genus two surface\label{Fig:genus2pencil}}
\end{figure}
\end{center}
\begin{Example} Consider the genus two surface.  
The curves $\zeta_i$ generate all simple closed curves, in the sense that any homotopically essential curve must have non-trivial geometric intersection number with some $\zeta_i$, and the Dehn twists $\tau_{\zeta_j}$ generate the mapping class group\footnote{It will follow from Proposition  \ref{Prop:SpheresGenerate} that the curves $\zeta_i$ split-generate the Fukaya category.}. Over the ring $A = \oplus_{i,j} HF^*(\zeta_i, \zeta_j)$  the module $Y_{\sigma}$ defined by the curve $\sigma$ is isomorphic to the sum $\hat{Y}_{\sigma} = S_{\zeta_3} \oplus S_{\zeta_3}[1]$ of the simple module over $\zeta_3$ and its shift, essentially because there are no non-constant holomorphic lunes or triangles in the picture.  However, the endomorphism algebra of $S_{\zeta_3} \oplus S_{\zeta_3}[1]$ is a matrix algebra, in particular Ext$_{A-\mathrm{mod}}(\hat{Y}_{\sigma}, \hat{Y}_{\sigma}) \not \cong HF^*(\sigma,\sigma) = H^*(S^1)$.  
\end{Example}

The problem here is that $A$ is not just a ring, and in regarding it as such we are ignoring important structure: we introduce that structure next.

\begin{Notes}
For basics on Floer (and quantum) cohomology, see \cite{Floer, Oh, AudinDamian, McD-S2}. For algebraic structures, see \cite{BiranCornea, FO3, WW2, RitterSmith}, and many of Seidel's papers, e.g. \cite{Seidel:graded, Seidel:MCG, Seidel:LES, Seidel:twist}.   
\end{Notes}
%%%%%%%%%%%%%%%%%%%%%%%%%%%%%

\section{The Fukaya category}\label{Sec:Fukaya}

\subsection{$A_{\infty}$-categories}

A (non-unital) $A_{\infty}$-category $\scrA$ over $\bK$ comprises: a set of objects Ob$\,\scrA$; for each $X_0, X_1 \in$ Ob$\,\scrA$ a ($\bZ_2$-graded or $\bZ$-graded, depending on the setting) $\bK$-vector space $hom_{\scrA}(X_0, X_1)$; and $\bK$-linear composition maps, for $k \geq 1$,
\[
\mu_{\scrA}^k: hom_{\scrA}(X_{k-1},X_k) \otimes \cdots \otimes hom_{\scrA}(X_0,X_1) \longrightarrow hom_{\scrA}(X_0,X_k)[2-k]
\]
of degree $2-k$ (the notation $[l]$ refers to \emph{downward} shift by $l \in \bZ$, or $l\in \bZ_2$ as appropriate).  The maps $\{\mu^k\}$ satisfy a hierarchy of quadratic equations
\[
\sum_{m,n} (-1)^{\maltese_n} \mu_{\scrA}^{k-m+1} (a_d,\ldots,a_{n+m+1}, \mu_{\scrA}(a_{n+m},\ldots,a_{n+1}),a_n \ldots, a_1) = 0
\]
with $\maltese_n = \sum_{j=1}^n |a_j|-n$, $|a|$ the degree of $a$, and where the sum runs over all possible compositions: $1 \leq m \leq k$, $0\leq n \leq k-m$.  The equations imply in particular that $hom_{\scrA}(X_0,X_1)$ is a cochain complex with differential $\mu^1_{\scrA}$; the cohomological category $H(\scrA)$ has the same objects as $\scrA$ but morphism groups are the cohomologies of these cochain complexes.  $H(\scrA)$  has an associative composition 
\[
[a_2]\cdot[a_1] = (-1)^{|a_1|}[\mu_{\scrA}^2(a_2,a_1)]
\]
whereas the chain-level composition $\mu_{\scrA}^2$ on $\scrA$  is only associative up to homotopy.  (Thus $\scrA$  is not in fact a category.) 
More explicitly, if $x,y,z$ satisfy $\mu^1(\cdot) = 0$ then in $\scrA$
\[
\mu^1\mu^3(x,y,z) + \mu^2(x,\mu^2(y,z)) + \mu^2(\mu^2(x,y),z) = 0.\]
The higher-order compositions $\mu_{\scrA}^d$ are \emph{not} chain maps, and do not descend to cohomology.   An $A_{\infty}$-category with a single object is an $A_{\infty}$-algebra.

\begin{Example}
The singular chain complex $C_*(\Omega Q)$ of the based loop space of a topological space $Q$ is an $A_{\infty}$-algebra, reflecting the fact that $\Omega Q$ is an $h$-space, with a composition from concatenation of loops which is associative up to systems of higher homotopies. 
\end{Example}

A non-unital $A_{\infty}$-functor $\cF: \scrA \rightarrow \scrB$ between non-unital $A_{\infty}$-categories $\scrA$ and $\scrB$ comprises a map $\cF$: Ob$\,\scrA \rightarrow$ Ob$\,\scrB$, and multilinear maps for $d\geq 1$
\[
\cF^d: hom_{\scrA}(X_{d-1}, X_d) \otimes \cdots \otimes hom_{\scrA}(X_0,X_1) \rightarrow hom_{\scrB}(\cF X_0, \cF X_d)[1-d]
\]
now satisfying the polynomial equations
\begin{multline*}
\sum_{r} \sum_{s_1+\cdots+s_r=d} \mu_{\scrB}^r(\cF^{s_r}(a_d,\ldots,a_{d-s_r+1}),\ldots, \cF^{s_1}(a_{s_1},\ldots, a_{1})) \\
\quad  = \ \sum_{m,n} (-1)^{\maltese_n} \cF^{d-m+1}(a_d,\ldots, a_{n+m+1}, \mu_{\scrA}^m (a_{n+m},\ldots, a_{n+1}), a_{n},\ldots a_1)
\end{multline*}
Any such defines a functor $H(\cF):H(\scrA) \rightarrow H(\scrB)$ which takes $[a] \mapsto [\cF^1(a)]$. We say $\cF$ is a \emph{quasi-isomorphism} if $H(\cF)$ is an isomorphism.   The class of $A_{\infty}$-functors from $\scrA$ to $\scrB$  form the objects of a non-unital $A_{\infty}$-category $\scrQ=nu$-$fun(\scrA,\scrB)$,  see \cite{FCPLT}. 
If one pictures the $A_{\infty}$-operation (with many inputs and one output) as a tree, then the $A_{\infty}$-relations involve summing over images as on the right of the top line of Figure \ref{Fig:Ainftytree}.  In that language, if the functor operation is like the mangrove (partially submerged tree) on the lower left of Figure \ref{Fig:Ainftytree}, the functor relations involve summing over pictures as on the lower right.
\begin{center}
\begin{figure}[ht]
\includegraphics[scale=0.3]{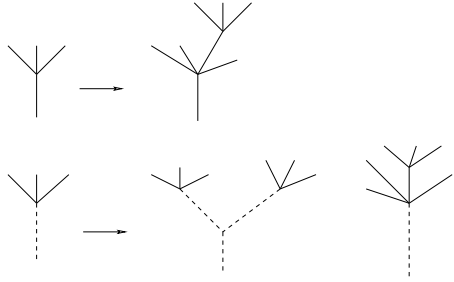}
\caption{The $A_\infty$-relations (top) and $A_{\infty}$-functor relations (below) are sums of compositions as depicted on the right sides of the figure.\label{Fig:Ainftytree}}
\end{figure}
\end{center}

\begin{Remark} A \emph{strictly unital} $A_{\infty}$-category $\scrA$ is one which has unit elements $e_X \in hom^0_{\scrA}(X,X)$ for each object $X$, such that the $e_X$ are closed, $\mu^1_{\scrA}(e_X)=0$, and 
\[
\mu^2_{\scrA}(a,e_X) = a = (-1)^{|a|}\mu^2_{\scrA}(e_X, a), \quad \mu^k_{\scrA}(\ldots, e_X, \ldots) = 0 \ \forall \, k\geq 3.
\]
$\scrA$ is \emph{cohomologically unital} if $H(\scrA)$ is unital, i.e. is a linear category in the usual sense. Any cohomologically unital $A_{\infty}$-category is quasi-isomorphic to a strictly unital one \cite{FCPLT}. 
\end{Remark}

\subsection{$A_{\infty}$-modules and twisted complexes}

An $A_{\infty}$-module $\scrM$ is an element of the category $nu$-$fun(\scrA^{opp},Ch)$, with $Ch$ the $dg$-category of chain complexes.  Concretely, this associates to any $X\in$ Ob$\,\scrA$ a $\bZ$ or $\bZ_2$-graded $\bK$-vector space $\scrM(X)$, and there are maps for $k\geq 1$
\[
\mu_{\scrM}^k: \scrM(X_{k-1}) \otimes hom_{\scrA}(X_{k-2},X_{k-1}) \otimes \cdots \otimes hom_{\scrA}(X_0,X_1) \rightarrow \scrM(X_0)[2-k].
\]
The $A_{\infty}$-functor equations imply that $\mu_{\scrM}^1$ is a differential on the chain complex  $\scrM(X_0)$. For any $Y \in$ Ob$\,\scrA$, there is an associated \emph{Yoneda} module $\scrY$ defined by
\[
\scrY(X) = hom_{\scrA}(X,Y); \quad \mu_{\scrY}^d = \mu_{\scrA}^d. 
\]
The association $Y \mapsto \scrY$ extends to a canonical non-unital $A_{\infty}$-functor $\scrA \rightarrow mod$-$(\scrA)$, the Yoneda embedding.  The category of $A_{\infty}$-modules is naturally a \emph{triangulated} $A_{\infty}$-category:  morphisms have cones.  More precisely, if $c\in hom_{\scrA}(Y_0,Y_1)$ is a degree zero cocycle, $\mu^1_{\scrA}(c)=0$, there is an $A_{\infty}$-module Cone$(c)$ defined by
\begin{equation}\label{eqn:cone}
\mathrm{Cone}(c)(X) = hom_{\scrA}(X,Y_0)[1] \oplus hom_{\scrA}(X,Y_1)
\end{equation}
and with operations $\mu^d_{Cone(c)}((b_0,b_1),a_{d-1},\ldots,a_1)$ given by the pair of terms
\[
\left(\mu_{\scrA}^d(b_0,a_{d-1},\ldots,a_1), \mu_{\scrA}^d(b_1,,a_{d-1},\ldots,a_1)+ \mu_{\scrA}^{d+1}(c,b_0,a_{d-1},\ldots,a_1) \right).
\]
One can generalise the mapping cone construction and consider \emph{twisted complexes} in $\scrA$. First, the additive enlargement $\Sigma\scrA$ has objects formal direct sums
\begin{equation} \label{eq:twisted_complex_vector_space}
X = \oplus_{i\in I} V^i \otimes X^i
\end{equation}
with $\{X^i\} \in$ Ob$\,\scrA$ and $V^i$ finite-dimensional graded $\bK$-vector spaces. A  twisted complex is a pair $(X,\delta_X)$ with $X \in \Sigma\scrA$ and $\delta_X \in hom^1_{\Sigma\scrA}(X,X)$  a matrix of differentials
\begin{equation} \label{eq:twisted_complex_differential}
\delta_X = (\delta_X^{ji});\qquad \delta_X^{ji} = \sum_k \phi^{jik} \otimes x^{jik}
\end{equation}
with $\phi^{jik} \in Hom_{\bK}(V^i,V^j)$, $x^{jik} \in hom_{\scrA}(X^i,X^j)$ and having total degree $|\phi^{jik}| + |x^{jik}| = 1$.  The differential $\delta_X$ should satisfy the two properties
\begin{itemize}
\item $\delta_X$ is strictly lower-triangular with respect to some filtration of $X$;
\item $\sum_{r=1}^{\infty} \mu^r_{\Sigma\scrA}(\delta_X,\ldots,\delta_X)=0$.
\end{itemize}
Twisted complexes form the objects of an $A_{\infty}$-category Tw$(\scrA)$, which has the property that all morphisms can be completed with cones to sit in exact triangles.  Moreover, exact triangles can be manipulated rather effectively, for instance they can be ``rolled up":
\begin{equation} \label{Eqn:RollTriangle}
\xymatrix{ X_0 \ar^{a}[r] & X_1 \ar[r] & \ar@/^1.5pc/[ll]_{[1]} Y}, \ 
\xymatrix{ X_1 \ar^{b}[r] & X_2 \ar[r] & \ar@/^1.5pc/[ll]_{[1]} Z} \ \Rightarrow \ 
\xymatrix{ X_0 \ar^{ba}[r] & X_2 \ar[r] & \ar@/^1.5pc/[ll]_{[1]} \mathrm{Cone}(Z\to Y)}
\end{equation}
which means that one can also manipulate the associated long exact sequences in cohomology. By taking the tensor product of an object $X$ by $\bK$ placed in some non-zero degree, one sees Tw$(\scrA)$ has a \emph{shift functor}, which has the effect of shifting degrees of all morphism groups down by one.  In the $\bZ_2$-graded case, the square of this functor is trivial.

Finally, the \emph{derived category} $D^{\pi}(\scrA) = H(\Tw^{\pi}(\scrA))$ is given by taking the idempotent completion of Tw$(\scrA)$, which incorporates new objects so that any cohomological idempotent morphism in $H(\Tw\, \scrA)$ splits in Tw$^{\pi}(\scrA)$, and then passing to cohomology.  This is also sometimes called the category of \emph{perfect modules}, denoted Perf$(\scrA)$.   Despite its rather involved construction, this is an $A_{\infty}$-category with good properties: being both triangulated and idempotent complete, it can often be reconstructed (up to an unknown quasi-isomorphism) from a finite amount of data, see Section \ref{Sec:Generation}, in a way that neither $\scrA$ nor Tw$(\scrA)$ themselves can be.

\newcommand{\Chord}{{\EuScript X}}
\newcommand{\ro}{{\mathrm{or}}}
\newcommand{\scrR}{\EuScript R}

\subsection{Holomorphic polygons}

Let $X$ be a symplectic manifold; for simplicity we will suppose this satisfies some exactness  or monotonicity, and restrict to exact or monotone Lagrangians, rather than grappling with obstruction theory.  We follow \cite{FCPLT, Sheridan3} in constructing the Fukaya category via consistent choices of Hamiltonian perturbations.

 To start, fix a (usually finite) collection $\scrL$ of Lagrangians which we require to be exact or monotone, oriented, and compact.  The Fukaya category $\scrF(X)$ will be a $\bK$-linear $A_{\infty}$-category with objects the elements of $\scrL$.  If char$(\bK)\neq 2$ then we assume that the elements of $\scrL$ are also equipped with spin structures.  Given a pair  $(L_0, L_1)$ of  Lagrangians in $\scrL$, fix a compactly supported Hamiltonian function 
\begin{equation}
  H_{L_0,L_1} :  [0,1] \times X \rightarrow \bR
\end{equation}
whose time-$1$ flow maps $L_0$ to a Lagrangian $\phi_H^1(L_0)$ transverse to $L_1$. Let $\Chord(L_0,L_1)$ denote the set of intersection points $\phi_H^1(L_0) \pitchfork L_1$, and set 
\begin{equation}
  CF^*(L_0,L_1) \equiv \bigoplus_{x \in \Chord(L_0,L_1)}  \ro_{x}
\end{equation}
where $\ro_x$ is a $1$-dimensional $\bK$-vector space of ``coherent orientations" at $x$, see \cite[Section 11h]{FCPLT}, which is associated to $x$ through the index theory of the $\overline{\partial}$-operator.  We take a time-dependent almost complex structure given by a map $J: [0,1] \rightarrow \scrJ$, which defines an $\bR$-translation invariant structure on the domain $[0,1] \times \bR$. The differential in $ CF^*(L_0,L_1)  $ counts Floer trajectories, i.e. solutions of the equations \eqref{eq:Floer} defined with respect to $J$, which are rigid (virtual dimension zero). We assume that $H_{L_0,L_1}  $ and $J$ are chosen generically so that these moduli spaces are regular, see \cite{FHS}.

To define the $A_{\infty}$-structure on the category, for $k\geq 2$ let $\scrR^{k+1}$ denote the moduli space of discs with $k+1$ punctures on the boundary; these domains are all rigid, and the moduli space of such is the Stasheff associahedron. Let $\Delta$ denote the unit disc in $\bC$. As in \cite[Section 12g]{FCPLT}, we orient $\scrR^{k+1}$ by fixing the positions of $p_0$, $p_1$, and $p_2$, thereby identifying the interior of $  \scrR^{k+1} $ with an open subset of $(\partial \Delta)^{k-2}$, which is canonically oriented.  Having fixed the distinguished puncture $p_0$, the remainder $\{p_1,\ldots, p_k\}$ are ordered counter-clockwise along the boundary. Following \cite[Section 9g]{FCPLT},  choose families of strip-like ends for all punctures. Concretely, we set
\begin{equation} \label{eqn:in-out-striplike-ends}
  Z_- = (-\infty,0] \times [0,1] \textrm{ and } Z_+ = [0,\infty) \times [0,1]
\end{equation}
and choose, for each surface $\Sigma \in \scrR^{k+1}$,  conformal embeddings
\begin{equation}
\epsilon_{0}: Z_- \rightarrow \Sigma, \quad \epsilon_i: Z_+ \rightarrow \Sigma \quad  \textrm{for} \, 1 \leq i \leq k\end{equation}
 which take $\partial Z_{\pm}$ into $\partial \Sigma$, and which converge at the end to the punctures $p_i$. The differences in the choices of ends reflects the fact that $p_0$ will be an output and the other $p_i$'s inputs to the $A_{\infty}$-operations.

Given a sequence  $(L_0, \ldots, L_k)$ of objects of $\scrL$ we next choose inhomogeneous data on the  universal curve over $ \scrR^{k+1} $. Given  $\Sigma \in \scrR^{k+1}  $ and $z \in \Sigma$, this data comprises a map 
\begin{equation}
  K : T_{z} \Sigma \to C^{\infty}_{ct}(X, \bR)
\end{equation} 
subject to the constraint that $\epsilon_i^* K = H_{L_{i-1},L_{i}} dt $. (The Deligne-Mumford compactifications of the spaces $\scrR^{k+1}$ have boundary strata which are products of smaller associahedra, and the inhomogeneous data is constructed inductively over these moduli spaces so as to be compatible with that stratification as well as with the strip-like ends.)  
 $K$ defines a $1$-form on $\Sigma$ with values in the space of vector fields on $X$, namely $Y(\xi) = X_{K(\xi)}$ (where $X_H$ is as usual the Hamiltonian vector field associated to $H$). We obtain a Floer (i.e. perturbed Cauchy-Riemann) equation:
\begin{equation} \label{eq:Floer_equation_0-puncture}
 J \left( du(\xi) - Y(\xi)  \right) = du(j \xi) - Y(j \xi) ,
\end{equation}
on the space of maps $u: \Sigma \to X$ taking the boundary segment  from $p_{i-1}$ to $p_{i}$ on the disc  to $L_i$.  We denote by
\[
  \scrR^{k+1}(X | x_0; x_k ,\cdots, x_1)
\]
the space of finite energy solutions which converge to $x_i$ along the end $\epsilon_i$. Standard regularity results imply that, for generic data, this space is  a smooth manifold of dimension
\begin{equation} \label{Eqn:VirDim}
k - 2 +  \deg(x_0) - \sum_{i=1}^k \deg(x_i).
\end{equation}

The Gromov-Floer construction produces a compactification of $  \scrR^{k+1}(X| x_0; x_k ,\cdots, x_1 )  $ consisting of stable discs. The exactness or monotonicity hypotheses we impose ensure that, if the dimension of $\scrR^{k+1}(X | x_0; x_k ,\cdots, x_1)$ is at most 1, the compactification contains no sphere or disc bubbles, and is again a smooth closed manifold of the correct dimension.  The moduli space is oriented relative to the orientation lines by the choice of orientation on the associahedron.  In particular, whenever \eqref{Eqn:VirDim} vanishes, the signed count of elements of  $  \scrR^{k+1}(X|x_0; x_k, \ldots, x_1) $ defines a map:
\begin{equation} \label{eqn:summand}
   \ro_{x_k} \otimes  \cdots \otimes \ro_{x_1} \to \ro_{x_0}.  
\end{equation}
By definition, \eqref{eqn:summand} defines the $\ro_{x_0}$-component of the $A_{\infty}$-operation $\mu_{\scrF(X)}^k$ restricted to the given inputs:
\begin{equation}
   \ro_{x_k} \otimes  \cdots \otimes \ro_{x_1}  \subset CF^*(L_{k-1}, L_k) \otimes \cdots \otimes CF^*(L_{0}, L_1)  \to CF^*(L_{0}, L_k).
\end{equation}
The $A_{\infty}$-relations are algebraic shadows of the structure of the Deligne-Mumford compactification of the moduli space of  stable disks with boundary marked points $\overline{\scrR}^{k+1}$: the terms in the quadratic associativity equations correspond to codimension one facets in the boundary locus of singular stable disks, cf. Figure \ref{Fig:DMboundary} (and compare to Figure \ref{Fig:BrokenTriangles}).  The next compactification $\overline{\scrR}^{5}$ is a pentagon, and the five facets are the terms in the first quadratic equation involving $\mu^4$, etc. 

\begin{center}
\begin{figure}[ht]
\includegraphics[scale=0.7]{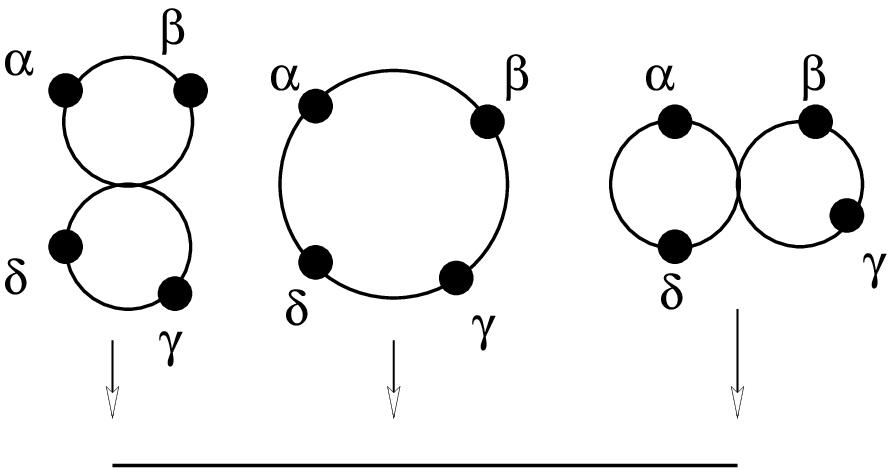} \qquad \includegraphics[scale=0.26]{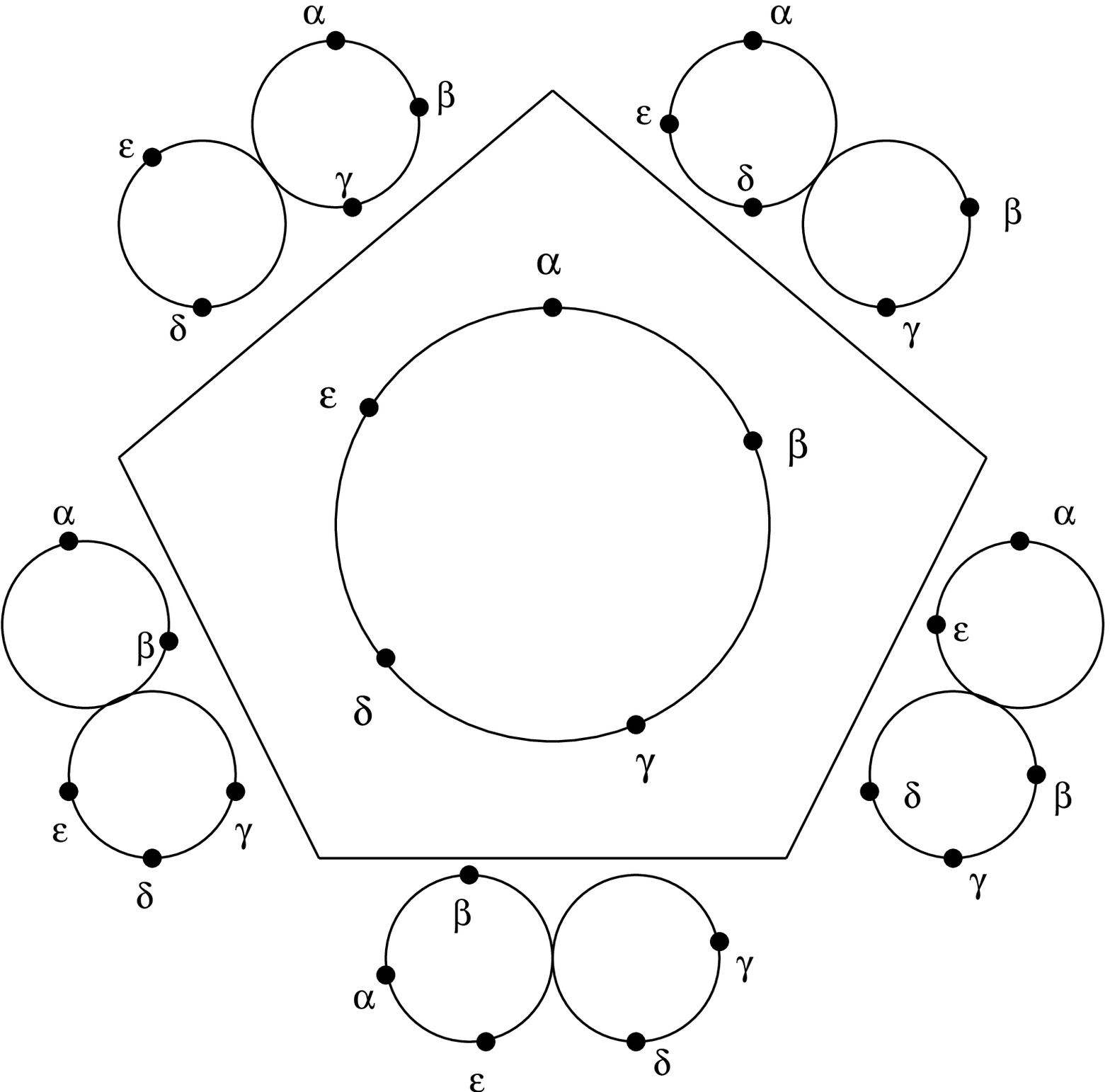}
\caption{The compactifications $\overline{\scrR}^4(\delta;\alpha,\beta,\gamma)$ and $\overline{\scrR}^5(\epsilon; \alpha,\beta,\gamma,\delta)$.\label{Fig:DMboundary}}
\end{figure}
\end{center}

\begin{Example} \label{Ex:CoveringSpace} The Fukaya category is built out of holomorphic discs, which lift  to covering spaces. 
Let $\pi:\tilde{M} \rightarrow M$ be a finite Galois cover of Stein manifolds, and let $\scrF(\cdot)$ denote the exact Fukaya category. There is a functor
$\pi^*: \scrF(M) \longrightarrow \scrF(\tilde{M})$ 
which takes $L \in \scrF(M)$ to the total preimage $\pi^{-1}(L) \subset \tilde{M}$, and such that the map on morphisms $ HF^{*}(L,L) \to HF^{*}(\pi^{-1}(L), \pi^{-1}(L) )$ 
agrees with the classical pullback on cohomology (since the fixed locus of the group action is presumed trivial, equivariant transversality is not problematic).  Deck transformations of $\pi$ act by autoequivalences of $\scrF(\tilde{M})$.
\end{Example}

Fukaya categories are cohomologically unital because $HF^*(L,L)$ is a unital ring.  However, they are not ``geometrically" strictly unital, i.e. there are not typically strict units without applying some abstract quasi-isomorphism.   

In a few cases, one can regard the passage from twisted complexes $\Tw\,\scrF(X)$ to their idempotent completion $\Tw^{\pi}\scrF(X)$ as follows: iterated Lagrange  surgeries (which we can hope to interpret as mapping cones) give rise to Lagrangian submanifolds which may be disconnected, and one is including the individual components in taking split-closure. In general no such geometric interpretation is available: for instance, for the diagonal $S^2 \subset \bP^1\times(\bP^1)^-$ the Floer cohomology $\cong QH^*(\bP^1)$ has an idempotent, and the summands of the object have homology classes $(1\pm[S^2])/2$, which do not live in pure degree.

\subsection{Twists}
A particularly important class of mapping cones are those arising from twist functors.  Given $Y \in$ Ob$\,\scrA$ and an $\scrA$-module $\scrM$, we define the \emph{twist} $\scrT_Y\scrM$ to be the module 
\[
\scrT_Y\scrM(X) = \scrM(Y)\otimes hom_{\scrA}(X,Y)[1] \oplus \scrM(X)
\]
which is the cone over the canonical evaluation morphism
\begin{equation} \label{Eqn:Twist}
\scrM(Y) \otimes \scrY \rightarrow \scrM,
\end{equation}
where $\scrY$ denotes the Yoneda image of $Y$.  For two objects $Y_0, Y_1 \in$ Ob$\, \scrA$, the essential feature of the twist is that it gives rise to a canonical exact triangle in $H(\scrA)$
\[
\cdots \rightarrow Hom_{H(\scrA)}(Y_0,Y_1) \otimes Y_0 \rightarrow Y_1 \rightarrow T_{Y_0}(Y_1) \stackrel{[1]}{\rightarrow} \cdots
\]
where $T_{Y_0}(Y_1)$ is any object whose Yoneda image is $\scrT_{Y_0}(\scrY_1)$; if $H(\scrA)$ has finite-dimensional morphism spaces, such an object necessarily exists, cf. \cite[p. 64]{FCPLT}. 

 For Fukaya categories, the twist functor $\scrT_L \in nu$-$fun(\scrF(X),\scrF(X))$ plays a special role when $L$ is \emph{spherical}, meaning that $\Hom_{\mathrm{Don}(X)}(L,L) \cong H^*(S^n)$.  Suppose $L\subset X$ is a Lagrangian sphere;  the \emph{geometric twist} $\tau_L$ is the autoequivalence of $\scrF(X)$ defined by the positive Dehn twist in $L$, cf. Section \ref{Sec:Appl1}. 
 On the other hand, in the monotone or exact case, such a sphere gives rise to a well-defined (though in the monotone case not \emph{a priori} non-zero) object of the Fukaya category $\scrF(X)$, hence an algebraic twist functor in the sense described above. 

\begin{Theorem}[Seidel] \label{Prop:twists}
If $L\subset X$ is a Lagrangian sphere, equipped with the non-trivial \emph{Spin} structure if $L \cong S^1$, then $\tau_L$ and $\scrT_L$ are quasi-isomorphic in $nu$-$fun(\scrF(X),\scrF(X))$.
\end{Theorem}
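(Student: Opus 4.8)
The plan is to translate the statement into the language of $A_{\infty}$-bimodules over $\scrF=\scrF(X)$ and to recognise both functors as the cone of the \emph{same} bimodule morphism. Under the dictionary between $nu$-$fun(\scrF,\scrF)$ and $\scrF$-bimodules, the identity functor corresponds to the diagonal bimodule $\Delta$, the autoequivalence $\tau_L$ to its graph bimodule $\Gamma_{\tau_L}$ with $\Gamma_{\tau_L}(K,Y)\simeq CF^*(K,\tau_L(Y))$, and the ``restrict-and-tensor'' functor $Y\mapsto CF^*(L,Y)\otimes L$ to the bimodule $\scrB_L$ with $\scrB_L(K,Y)=CF^*(L,Y)\otimes CF^*(K,L)$, the external tensor product of the right and left Yoneda modules of $L$. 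By construction --- the cone \eqref{eqn:cone} applied to the canonical evaluation \eqref{Eqn:Twist}, $\scrM(L)\otimes\scrY_L\to\scrM$ --- the algebraic twist $\scrT_L$ is the cone of the evaluation bimodule morphism $\varepsilon\colon\scrB_L\to\Delta$ (composition with the fundamental Floer classes of $L$). So it suffices to produce a bimodule morphism $g\colon\scrB_L\to\Delta$ with $\mathrm{Cone}(g)\simeq\Gamma_{\tau_L}$ and $g\simeq\varepsilon$; since quasi-isomorphic bimodules correspond to quasi-isomorphic functors, this identifies $\tau_L$ with $\scrT_L$ in $nu$-$fun(\scrF,\scrF)$.

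First I would construct a bimodule morphism $f\colon\Delta\to\Gamma_{\tau_L}$ promoting the ``inclusion'' cochain map $CF^*(K,Y)\to CF^*(K,\tau_L(Y))$ to the $A_{\infty}$ level. That chain map is the morphism $\phi$ from the proof of the Dehn twist exact triangle \cite{Seidel:LES} (cf. Section~\ref{Subsec:ExactTriDon}): it counts rigid sections of the Lefschetz fibration over the once-punctured disc whose critical point has vanishing cycle $L$, with moving boundary conditions interpolating $L_0$ and $\tau_L(L_0)$. Placing extra incoming boundary marked points on the domains and choosing perturbation data consistently with the Deligne--Mumford and Lefschetz degenerations --- exactly as $\scrF(X)$ itself is built, following \cite{FCPLT} --- the higher counts assemble $\phi$ into an $A_{\infty}$-bimodule morphism $f$.

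Next I would identify $\mathrm{Cone}(f)$ and rotate. The Dehn twist long exact sequence \cite{Seidel:LES}, applied with every test object $K$, says precisely that for each pair $(K,Y)$ the chain map underlying $f$ sits in a long exact sequence whose third term is $\scrB_L(K,Y)$; hence $f$ induces an objectwise cohomology isomorphism $H(\mathrm{Cone}(f))\cong\scrB_L[1]$, and a bimodule morphism that is an objectwise cohomology isomorphism is a quasi-isomorphism. Thus $\mathrm{Cone}(f)\simeq\scrB_L[1]$ as bimodules, and rotating the triangle $\Delta\xrightarrow{f}\Gamma_{\tau_L}\to\mathrm{Cone}(f)\to\Delta[1]$ presents $\Gamma_{\tau_L}$ as the cone of the connecting bimodule morphism $g\colon\scrB_L\to\Delta$ (the shifts match because the excerpt's two cone descriptions of $\tau_L(Y)$ and $\scrT_L(Y)$ already agree objectwise). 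It remains to see $g\simeq\varepsilon$: the geometric connecting map of \cite{Seidel:LES} is the holomorphic-triangle product with the fundamental Floer cocycle of $L$, i.e. the $A_{\infty}$-composition maps of $\scrF(X)$ themselves, which is exactly how $\varepsilon$ is defined; since $\scrB_L$ is built from the Yoneda modules of the single object $L$, the only ambiguity in such a morphism is an overall scalar, pinned down by the unit normalisation, so $g\simeq\varepsilon$. Finally, when $L\cong S^1$ the orientations of the moduli of Dehn-twist sections are shifted by the Spin structure of $L$; choosing the non-trivial one makes the signs in $f$ and in the connecting map consistent (cf. the orientation bookkeeping for the surgery exact triangle).

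The crux, and the main obstacle, is the passage from the \emph{objectwise} exact triangle --- essentially the content of \cite{Seidel:LES}, which may be assumed here --- to an exact triangle of $A_{\infty}$-bimodules. This forces the construction of the natural transformation $f$ with all its higher components, a parametrised moduli problem over families of Lefschetz fibrations carrying extra marked points, dragging along the nullhomotopy $\kappa$ of \eqref{Eqn:Nullhtpy} and its higher analogues with compatible perturbation data. By comparison, everything downstream --- identifying the cone, rotating into the correct triangle, recognising the connecting morphism as the canonical evaluation, and the $S^1$ sign fix --- is largely formal.
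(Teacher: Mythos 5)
The paper does not actually prove this statement: it is stated as a citation, ``This is \cite[Corollary 17.17]{FCPLT} for exact symplectic manifolds, and the argument in the monotone case is similar'', so there is no in-paper argument to compare yours against. Judged against the cited source, your sketch is in substance a faithful outline of Seidel's proof, transposed into bimodule language: the morphism $f$ you describe is exactly Seidel's natural transformation built by decorating the section-counting Lefschetz fibrations of Section \ref{Subsec:ExactTriDon} with auxiliary boundary inputs, and you correctly locate the real work in choosing consistent perturbation data over the compactified parameter families carrying the nullhomotopy $\kappa$ and its higher analogues. Two steps you assert rather than argue deserve flagging. First, ``quasi-isomorphic graph bimodules give quasi-isomorphic functors'' is not free: one needs cohomological full faithfulness of the Yoneda embedding to descend from functors into $mod$-$(\scrF)$ back to $nu$-$fun(\scrF,\scrF)$. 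Second, in identifying the connecting morphism $g$ with the evaluation $\varepsilon$, the Yoneda-type computation $H^0\bigl(hom(\scrB_L,\Delta)\bigr)\cong HF^0(L,L)\cong \bK$ does reduce the ambiguity to a scalar, but you must still rule out $g\simeq 0$ (in which case $\mathrm{Cone}(g)$ would split as $\Delta\oplus\scrB_L[1]$ and could not be $\Gamma_{\tau_L}$); the scalar is pinned down not by abstract unit normalisation but by the explicit computation that the relevant section count is $\pm 1$, and this is precisely where the $Spin$ hypothesis enters: the orientations of those moduli spaces depend on the $Spin$ structure, and for $L\cong S^1$ only the non-trivial choice makes the count agree with $\varepsilon$. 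With those caveats, your outline matches the cited argument.
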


This is \cite[Corollary 17.17]{FCPLT} for exact symplectic manifolds, and the argument in the monotone case is similar.    This gives a basic link between algebra and geometry, and strengthens the exact triangle in the Donaldson category which we introduced previously in Section \ref{Subsec:ExactTriDon}.

\subsection{Classification strategies} One strategy for understanding the Fukaya category of a given symplectic manifold $(X,\omega)$ has three basic steps: identify a collection of ``favourite" Lagrangian submanifolds $\{L_i\}$; compute the $A_{\infty}$-structure on the finite-dimensional algebra $\oplus_{i,j} HF^*(L_i, L_j)$; and then prove the $\{L_i\}$ (split-)generate $\scrF(X)$.   The central step is often  roundabout,  to minimise the need for  explicit solutions to the Cauchy-Riemann equations.

\subsubsection{Hochschild cohomology}
The \emph{Hochschild cohomology} of an $A_{\infty}$-category is defined by the following bar complex  $CC^*(\scrA,\scrA)$.  A degree $r$ cochain is a sequence $(h^d)_{d\geq 0}$ of collections of linear maps
\[
h^d_{(X_1,\ldots,X_{d+1})}: \bigotimes_{i=d}^1 hom_{\scrA}(X_i,X_{i+1}) \rightarrow hom_{\scrA}(X_1,X_{d+1})[r-d]
\]
for each $(X_1,\ldots,X_{d+1})\in \Ob(\scrA)^{d+1}$.   The differential is defined by the sum over  concatenations
\begin{equation} \label{Eqn:Hochschild}
\begin{aligned}
(\partial h)^d & (a_d,\ldots, a_1) = \\
& \sum_{i+j<d+1} (-1)^{(r+1)\maltese_i} \mu_{\scrA}^{d+1-j}(a_d,\ldots,a_{i+j+1},h^j(a_{i+j},\ldots,a_{i+1}),a_{i},\ldots,a_1) \\
+ &  \sum_{i+j\leq d+1} (-1)^{\maltese_{i} +r +1}  h^{d+1-j} (a_d,\ldots,a_{i+j+1},\mu_{\scrA}^j(a_{i+j},\ldots,a_{i+1}),a_{i},\ldots,a_1).
\end{aligned}
\end{equation}
More succinctly, in fact  $HH^*(\scrA,\scrA) = H(hom_{fun(\scrA,\scrA)}(\id,\id))$. Hochschild cohomology of an $A_{\infty}$-category over a field is invariant under both taking twisted complexes and idempotent completion \cite[Section 1c]{Seidel:Flux}. 

Let $A$ be a $\bZ_2$-graded algebra over a field $\bK$ which has characteristic not equal to 2.  We will be interested in $\bZ_2$-graded $A_{\infty}$-structures on $A$ which have trivial differential. Such a structure comprises a collection of maps 
\[
\{ \alpha^i: A^{\otimes i} \rightarrow A \}_{i\geq 2}
\]
of parity $i$.  We introduce the mod 2 graded Hochschild cochain complex
\[
CC^{\bullet+1}(A,A) \ = \ \prod_{i \geq 2} Hom^{(\bullet+i)}(A^{\otimes i},A)
\]
where $Hom^{(\bullet +i)}$ means we consider homomorphisms of parity $\bullet+i$. This carries the usual differential $d_{CC^{\bullet}}$ and the Gerstenhaber bracket $[\cdot, \cdot]$.   $A_{\infty}$-structures $(\alpha^i)$ on $A$ which extend its given product correspond to elements of $CC^{1}(A,A)$ satisfying the Maurer-Cartan equation
\begin{equation} \label{Eqn:Maurer-Cartan}
d_{CC^{\bullet}} (\alpha) + \frac{1}{2} [\alpha, \alpha] = 0\qquad \textrm{with} \quad \alpha^1 = \alpha^2 = 0.
\end{equation}
Elements $(g^i)_{i\geq 1}$ of $CC^0(A,A)$, comprising maps $A^{\otimes i} \rightarrow A$ of parity $i-1$,   act by gauge transformations on the set of Maurer-Cartan solutions $(\alpha^i)_{i\geq 2}$, at least when suitable convergence conditions apply. Thus, the set of $A_{\infty}$-structures on a given algebra $A$ is bound up with the Hochschild cochain complex of $A$. To connect the two discussions, if $\scrA$ denotes the $A_{\infty}$-algebra defined by the Maurer-Cartan element $\alpha \in CC^1(A,A)$, with $H(\scrA) = A$, then $HH^*(\scrA,\scrA)$ is the cohomology of the complex $CC^{\bullet}(A,A)$ with respect to the twisted differential $d_{CC^{\bullet}} + [\cdot, \alpha]$. 

The closed-open map $\mathcal{CO}^0: QH^*(X) \rightarrow HF^*(L,L)$ from \eqref{Eqn:ClosedOpen} extends, by considering discs with an interior input and several boundary inputs as well as a single boundary output, to a map
\begin{equation} \label{Eqn:COHH}
\mathcal{CO}: QH^*(X) \longrightarrow HH^*(\scrF(X), \scrF(X))
\end{equation}
which we still call the closed-open string map; composing this with the truncation map which projects to the length zero part of the Hochschild complex recovers the map \eqref{Eqn:ClosedOpen} introduced previously.  

\begin{Remark}\label{Rem:StarC1}
In Section \ref{Subsec:ModuleStructures} we remarked that Auroux, Kontsevich and Seidel proved that for monotone symplectic manifolds, the possible summands of the Fukaya category $\scrF(X)$ are indexed by the eigenvalues of $\ast c_1(X)$.  Formally, their argument starts from the observation that the composition
\[
\mathcal{CO}^0: \ QH^*(X) \longrightarrow HH^*(\scrF(X), \scrF(X)) \longrightarrow HF^*(L,L)
\]
of \eqref{Eqn:COHH} with truncation to the length zero part of the Hochschild complex satisfies  
\begin{equation} \label{Eqn:2c1degrees}
\mathcal{CO}^0(2c_1(X)) = 2 \frak{m}_0(L) \in HF^*(L,L).
\end{equation}  
Recalling that $\mu_L \mapsto 2c_1(X)$ under $H^2(X,L) \rightarrow H^2(X)$ and taking a representative $D$ for the Maslov cycle  $\mu_L \in H^2(X,L) = H^2_{ct}(X\backslash L)  \cong H_{2n-2}(X\backslash L)$ disjoint from $L$, to eliminate contributions from constant discs,  both sides of \eqref{Eqn:2c1degrees} count essentially the same thing: a Maslov index two disc with boundary on $L$ and with an interior marked point constrained to lie on $D$.  For index reasons,  $\frak{m}_0(L) \in HF^*(L,L)$ is a multiple of the unit $e_L$ (tautologically, the zero multiple if $L$ bounds no Maslov index two disc). If $\frak{m}_0(L) = \lambda e_L$ then 
\[
QH^*(X) \rightarrow HF^*(L,L) \quad \mathrm{takes} \ 2(c_1-\lambda 1) \mapsto 0
\]
and hence $c_1-\lambda 1$ is not invertible, in characteristic $\neq 2$.  \end{Remark}

\subsubsection{Finite determinacy\label{Subsec:FiniteDeterminacy}}
Just as the product on Floer cohomology has an underlying $A_{\infty}$-structure at chain level, the Lie bracket on Hochschild cohomology has an underlying $L_{\infty}$-structure at chain level.  Kontsevich's formality theorem \cite{Kontsevich:DefQuant} asserts that if $A$ is a smooth commutative algebra,  under suitable filtered nilpotence conditions there is a quasi-isomorphism of $L_{\infty}$-algebras $CC^*(A,A) \rightarrow HH^*(A,A)$ which induces a bijection on the set of Maurer-Cartan solutions. This enables one to transfer existence and classification results for $A_{\infty}$-structures on $A$ into questions about Hochschild cohomology and the action thereon of gauge transformations. 

The conclusion is well illustrated by taking $A = \Lambda(V)$ to be a finite-dimensional exterior algebra. The Hochschild-Kostant-Rosenberg theorem identifies Hochschild cohomology with the Lie algebra of polyvector fields on $V$, namely 
\[
Sym^{\bullet}(V^{\vee}) \otimes \Lambda(V) \ = \ \bC[\![V]\!] \otimes \Lambda(V).
\]
The Lie algebra structure comes from the Schouten bracket
\begin{equation}
\begin{aligned} {}
& [f \,\xi_{i_1} \wedge \cdots \wedge \xi_{i_k}, g \,\xi_{j_1} \wedge \cdots \xi_{j_l}] = \\
& \textstyle \qquad \sum_q (-1)^{k-q-1} f \, (\partial_{i_q}g) \,
\xi_{i_1}  \wedge \cdots \wedge \widehat{\xi_{i_q}} \wedge \cdots
\wedge \xi_{i_k} \wedge \xi_{j_1} \wedge \cdots \wedge \xi_{j_l} + \\ & \textstyle \qquad
 \sum_q (-1)^{l-q+ (k-1)(l-1)} g \,(\partial_{j_q}f)\, \xi_{j_1} \wedge \cdots \wedge \widehat{\xi_{j_q}} \wedge \cdots \wedge \xi_{j_l} \wedge \xi_{i_1} \wedge
 \cdots \wedge \xi_{i_k}.
\end{aligned}
\end{equation}
Since the Schouten bracket vanishes on functions, Kontsevich's theorem implies that any formal function $W\in\bC[\![V]\!]$ defines a solution of Maurer-Cartan, hence an $A_{\infty}$-structure on $\Lambda(V)$; and that two functions which differ by pullback by a formal diffeomorphism define quasi-isomorphic $A_{\infty}$-structures.  

Suppose now $W$ has an isolated critical point at $0\in V$. Then classical finite determinacy for singularities implies that $W$ is equivalent under formal change of variables to a polynomial. In this sense, the $A_{\infty}$-structure defined by $W$ is completely determined by a finite amount of data (up to quasi-isomorphism), hence is in principle computable, even though \emph{a priori} it may admit infinitely many non-zero products.   See Section \ref{Subsec:genus2} for an outline example.

\subsection{Generation\label{Sec:Generation}}
It is important to have a criterion for a finite collection of Lagrangians to generate the Fukaya category.  Explicitly, given $L_1,\ldots,L_k \subset X$, there is an $A_{\infty}$-algebra 
$\scrA \ = \ \oplus_{i,j} HF^*(L_i,L_j)$.
One says that the $L_i$ split-generate $\scrF(X)$ if every object of $\scrF(X)$ is quasi-isomorphic to a summand of some twisted complex on the $\{L_i\}$, which implies the functor
\[
\scrF(X) \longrightarrow  \mathrm{mod}_{\!A_{\infty}}(\scrA), \qquad K \mapsto \oplus_i \, HF^*(K,L_i)
\]
is a fully faithful embedding. We will mention two sufficient criteria; the latter is stronger, but the former is applicable in some interesting situations.

\subsubsection{Positive relations} Suppose first $\{L_1,\ldots, L_k\} \subset X$ are all Lagrangian spheres, and furthermore that they satisfy a \emph{positive relation}, meaning that some word $w \in \langle \tau_{L_1}, \ldots, \tau_{L_k}\rangle$   in positive Dehn twists $\tau_{L_i}$ is Hamiltonian isotopic to the identity. The data of $w$, together with the isotopy, define a symplectic Lefschetz fibration $\scrX \rightarrow \bP^1$ with generic fibre symplectomorphic to $X$, whose monodromy over $\bC \subset \bP^1$ is given by $w$, and which is completed to a fibration over $\infty \in \bP^1$ via the given Hamiltonian isotopy. The point of insisting that the isotopy be Hamiltonian (and not just symplectic) is that the total space of $\scrX$ then carries a symplectic structure which extends the fibrewise structure coming from $X$. In particular, there is a good theory of holomorphic curves in $\scrX$.
In stating the following Proposition, we assume that $X$ is monotone and work over $\bC$, so as in Remark \ref{Rem:StarC1},  $\scrF(X) = \oplus_{\lambda} \scrF(X;\lambda)$ with $\lambda \in \mathrm{Spec}(\ast c_1(X))$.

\begin{Proposition} \label{Prop:SpheresGenerate}
Let $\mathcal{C}(w) \in QH^*(X)$ denote the cycle obtained by evaluating holomorphic sections of $\scrX\rightarrow \bP^1$ at a base-point. Suppose the $L_i$ all belong to $\scrF(X; \lambda)$, and that quantum cup-product by $\mathcal{C}(w)$ on $QH^*(X;\lambda) $ is nilpotent. Then the $\{L_i\}$ split-generate $\scrF(X;\lambda)$. In particular, if $\lambda =0$ and $\mathcal{C}(w)$ is a multiple of $c_1(X)$, the $L_j$ split-generate $\scrF(X;0)$.
\end{Proposition}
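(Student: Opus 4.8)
The strategy is to apply Abouzaid's split-generation criterion \cite{Abouzaid:generation}, adapted to the monotone setting. Write $\scrA = \oplus_{i,j} HF^*(L_i,L_j)$ for the full $A_\infty$-subcategory of $\scrF(X;\lambda)$ on the $L_i$. There is an \emph{open-closed} string map $\mathcal{OC}: HH_*(\scrA,\scrA) \to QH^*(X;\lambda)$ (up to a degree shift), dual in flavour to the closed-open map $\mathcal{CO}$ of \eqref{Eqn:COHH} and running the other way; the criterion asserts that if the unit $e_\lambda \in QH^*(X;\lambda)$ lies in the image of $\mathcal{OC}$, then the $\{L_i\}$ split-generate $\scrF(X;\lambda)$. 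So it suffices to prove $e_\lambda \in \operatorname{im}(\mathcal{OC})$. A first observation is that $I := \operatorname{im}(\mathcal{OC})$ is an \emph{ideal} of the ring $QH^*(X;\lambda)$: this is the $\mathcal{OC}$-counterpart of the statement (recalled before Remark \ref{Rem:StarC1}) that $\mathcal{CO}$ is a unital ring map into the centre, and encodes compatibility of $\mathcal{OC}$ with the $QH^*(X)$-module structure on Hochschild homology. Hence $e_\lambda \in I$ if and only if the quotient ring $Q := QH^*(X;\lambda)/I$ vanishes, and it is enough to show $Q = 0$.

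The geometric input identifies $\mathcal{C}(w)$ modulo $I$: I claim $e_\lambda - \mathcal{C}(w) \in I$, i.e. quantum multiplication by $\mathcal{C}(w)$ acts on $Q$ as the identity. To see this, use the Lefschetz fibration $\pi: \scrX \to \bP^1$ determined by the positive relation $w$, whose vanishing cycles (for a suitable system of distinguished vanishing paths) are Hamiltonian isotopic to $L_1,\ldots,L_k$ and whose total space carries a symplectic form restricting to $\omega$ on fibres; a holomorphic section of $\pi$ evaluated at the base-point represents $\mathcal{C}(w)$. Degenerate $\pi$ by dragging the $k$ critical values towards $\infty$ (equivalently, stretch the neck around each singular fibre). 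The boundary of a compact one-dimensional moduli space of such sections then consists of a section of the trivial piece near $\infty$ --- which, because of the Hamiltonian trivialisation at infinity, contributes exactly $e_\lambda$ --- together with bubbled Lefschetz thimbles over the critical values, each a holomorphic disc with boundary on a vanishing cycle $L_i$. Reading off the resulting count of rigid configurations, the thimble contributions organise precisely into terms of $\mathcal{OC}$ evaluated on Hochschild cycles of $\scrA$, and one obtains $\mathcal{C}(w) \equiv e_\lambda \pmod I$. Monotonicity is used here to keep the relevant moduli spaces of sections of dimension $\leq 1$ free of unwanted sphere and disc bubbling, and the hypotheses $L_i \in \scrF(X;\lambda)$, $\mathcal{C}(w) \in QH^*(X;\lambda)$ guarantee all of this stays inside the $\lambda$-summand.

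Granting the congruence, the proposition follows at once: $\mathcal{C}(w)$ is nilpotent in $QH^*(X;\lambda)$ by hypothesis, so its image in $Q$ is nilpotent; but that image is $e_\lambda \bmod I$, the unit of $Q$, and a nilpotent unit forces $Q = 0$. Hence $e_\lambda \in I$ and the $\{L_i\}$ split-generate $\scrF(X;\lambda)$. For the final clause: when $\lambda = 0$ the operator $\ast c_1(X)$ on $QH^*(X;0)$ has $0$ as its only (generalised) eigenvalue and is therefore nilpotent, so if $\mathcal{C}(w)$ is a scalar multiple of $c_1(X)$ then quantum multiplication by $\mathcal{C}(w)$ is nilpotent on $QH^*(X;0)$ and the main statement applies.

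The step I expect to be the main obstacle is the degeneration argument of the second paragraph --- setting up the one-parameter family of Lefschetz fibrations and its moduli spaces of holomorphic sections, proving the requisite gluing and compactness, and matching the broken configurations (with signs, and in the correct eigensummand) against the moduli spaces defining $\mathcal{OC}$ on the subcategory generated by the vanishing cycles. An alternative, more algebraic route converts the positive relation, via Theorem \ref{Prop:twists}, into the functor identity $\scrT_{L_1}\cdots\scrT_{L_k} \simeq \id$ on $\scrF(X)$ and then resolves the iterated twist as a twisted complex on the $L_i$, with the class $\mathcal{C}(w)$ governing whether the input object persists in the resolution; but the $A_\infty$-bookkeeping there is comparably involved.
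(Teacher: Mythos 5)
Your route is genuinely different from the paper's. The paper does \emph{not} go through Abouzaid's criterion or the open-closed map at all: it takes what you describe at the end as the ``alternative, more algebraic route''. Namely, it invokes Theorem \ref{Prop:twists} to get, for each twist in the word $w$, the exact triangle $HF(L,K)\otimes L \to K \to \tau_L(K)$, rolls these up as in \eqref{Eqn:RollTriangle} into a single triangle $K \to wK \cong K \to E$ with $E \in \Tw\langle L_i\rangle$, and then uses the gluing theorem of \cite{Seidel:LES} to identify the composite map $K \to K$ with multiplication by $r(\mathcal{C}(w)) = \mathcal{CO}^0(\mathcal{C}(w)) \in HF^*(K,K)$. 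Nilpotence (after replacing $w$ by $w^N$) makes that map vanish, so $K$ splits off $E$. What the paper's approach buys is that the only analytic input is a gluing statement for sections over annuli, already established in \cite{Seidel:LES}; what yours would buy, if completed, is a statement purely about the closed-string invariant $QH^*(X;\lambda)$ modulo $\operatorname{im}(\mathcal{OC})$, independent of any test object $K$. Your surrounding algebra is fine: $\operatorname{im}(\mathcal{OC})$ is indeed an ideal, a nilpotent unit kills the quotient ring, and $\ast c_1$ is nilpotent on the generalised $0$-eigensummand.

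The gap is exactly where you predicted, but it is worth being precise about why. The congruence $\mathcal{C}(w) \equiv e_\lambda \pmod{\operatorname{im}(\mathcal{OC})}$ is a different identity from the one the paper proves, not a repackaging of it: the paper's gluing statement computes $\mathcal{CO}^0(\mathcal{C}(w))$ as a composite of continuation-type maps and contains no ``$e_\lambda$'' term, whereas yours is a statement about $\mathcal{OC}$ in the opposite direction. Your sketched degeneration does not obviously produce the unit as a boundary contribution: a holomorphic section of $\scrX \to \bP^1$ lies in a nontrivial homotopy class (in the elliptic surface example of Section 4.1 its count is a multiple of $[pt]$, not of $[T^2]$), so after stretching the neck the piece over the disc near $\infty$ cannot simply be a constant section contributing $e_\lambda$ --- the homotopy class has to be absorbed somewhere, and accounting for this (together with matching the broken thimble configurations against the actual moduli spaces defining $\mathcal{OC}$, with signs and in the correct eigensummand) is a substantial theorem in its own right, comparable to or harder than the gluing result the paper cites. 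As written, the key step is asserted rather than proved, so the argument is incomplete; either supply that Cardy-type identity or fall back on the iterated-triangle argument, which is the paper's proof.
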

  
\begin{proof}
By the correspondence between algebraic and geometric Dehn twists, Proposition \ref{Prop:twists}, there are exact triangles
\[
\cdots \rightarrow  HF(L,K)\otimes L \rightarrow K \rightarrow \tau_L(K) \stackrel{[1]}{\longrightarrow} \cdots
\]
for any Lagrangian $K \subset X$ lying in the $\lambda$-summand of the category.  Concatenating the triangles for the $\tau_{L_j}$ occuring in $w$, as in \eqref{Eqn:RollTriangle}, defines a natural map
\begin{equation}\label{eqn:concatenate}
K \rightarrow \prod_{i_j} \tau_{L_{i_j}} K \cong K
\end{equation}
defined by an element of $HF^*(K,K)$.  From the original construction of the long exact sequence in Floer cohomology \cite{Seidel:LES} and Section \ref{Subsec:ExactTriDon},  the natural map $K \rightarrow \tau_L(K)$ arises from counting sections of a Lefschetz fibration over an annulus with a single critical point. The gluing theorem \cite[Proposition 2.22]{Seidel:LES} implies that the concatenation of such maps counts holomorphic sections of the Lefschetz fibration given by sewing several such annuli together.   Under the natural restriction map $r: H^{*}(M) \rightarrow HF^{*}(K,K)$, Equation \ref{eqn:concatenate} is given by multiplication by the image of the cycle class $r(\mathcal{C}(w)) \in HF^{*}(K,K)$.  If $\mathcal{C}(w)$ vanishes, one map in the iterated exact triangle vanishes, and  $K$ is a summand in an iterated cone amongst the  $\{L_i\}$.  If $\ast \mathcal{C}(w)$ is nilpotent, we can run the same argument after replacing $w$ by $w^N$ for $N \gg 0$.
\end{proof}

\subsubsection{Abouzaid's criterion}
The obvious drawback of Proposition \ref{Prop:SpheresGenerate} is that the $L_i$ need to be spheres.  A better criterion was given by Abouzaid \cite{Abouzaid:generation}, in the exact case, and by Abouzaid-Fukaya-Oh-Ohta-Ono (in preparation) in general; explanations of the latter for monotone manifolds are given in \cite{RitterSmith, Sheridan3}.

\begin{Theorem} [Abouzaid-Fukaya-Oh-Ohta-Ono] \label{Thm:AFOOOgenerate}\label{Thm:Generation}
If the map $\mathcal{CO}: QH^*(X) \rightarrow HH^*(\scrA,\scrA)$ is an isomorphism, then the $L_i$ split-generate $\scrF(X)$.
\end{Theorem}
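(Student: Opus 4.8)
The plan is to reduce the statement to a \emph{resolution of the diagonal} and then feed in the hypothesis through the open--closed string map. Work in the product $Z=X^-\times X$ with its induced exact (resp.\ monotone) structure; the diagonal $\Delta\subset Z$ is an object of $\scrF(Z)$ with $HF^*(\Delta,\Delta)\cong QH^*(X)$. Under the dictionary between objects of $\scrF(Z)$ built from products $L_j\times L_i$ and $\scrA$-bimodules, $\Delta$ corresponds to the diagonal bimodule $\scrA_\Delta$ and the $L_j\times L_i$ to the free bimodules $\scrY^r_{L_j}\otimes_{\bK}\scrY^l_{L_i}$ (the K\"unneth identification); and by the standard categorical principle that $\scrA_\Delta$ induces the identity endofunctor, split-generating $\scrA_\Delta$ by such free bimodules is equivalent to split-generating $\scrF(X)$ by $\{L_i\}$. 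So it suffices to exhibit $\Delta$ as a retract, in $\Tw\,\scrF(Z)$, of a finite twisted complex on the $L_j\times L_i$. (In the monotone case this is carried out one eigenvalue-summand $\scrF(X;\lambda)$ at a time.)

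For the candidate resolution, one assembles from the products $\mu^d$ of $\scrA$ the finite truncations $\mathbf{P}_N$ of the bar resolution of $\scrA_\Delta$ --- these are honest twisted complexes on the $L_j\times L_i$ --- together with their canonical evaluation morphisms $\varepsilon_N\colon\mathbf{P}_N\to\Delta$ in $\Tw\,\scrF(Z)$; under the identifications above, the closed--open map $\mathcal{CO}$ of \eqref{Eqn:COHH} is the map on endomorphism rings induced by the (formal) evaluation $\mathrm{Bar}(\scrA)\to\scrA_\Delta$. Now $\Delta$ is split-generated by the $L_j\times L_i$ as soon as some $\varepsilon_N$ admits a right inverse up to homotopy; and since $HF^*(\Delta,\Delta)$ is the \emph{unital} ring $QH^*(X)$, it is enough to find $N$ and a morphism $\eta\colon\Delta\to\mathbf{P}_N$ with $\varepsilon_N\circ\eta=e$, the unit.

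The unit is produced by the open--closed map. Dual to $\mathcal{CO}$ there is an open--closed map $\mathcal{OC}\colon HH_*(\scrA,\scrA)\to QH^*(X)$ (I suppress a degree shift), built from moduli of discs with several boundary inputs and a single interior output. Two facts make it decisive. First, since $\scrA$ is the endomorphism $A_\infty$-algebra of finitely many \emph{closed} Lagrangians, Poincar\'e duality on Floer cohomology, $HF^*(L,L')\cong HF^{n-*}(L',L)^{\vee}$, equips it with a proper cyclic (weak Calabi--Yau) structure of dimension $n$; this identifies $HH_*(\scrA,\scrA)^{\vee}$ with $HH^*(\scrA,\scrA)$ up to shift and fits $\mathcal{CO}$ and $\mathcal{OC}$ into a square commuting with Poincar\'e duality on $QH^*(X)$. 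Hence if $\mathcal{CO}$ is an isomorphism then so is $\mathcal{OC}$, and in particular $e=\mathcal{OC}(\beta)$ for some Hochschild cycle $\beta$. Second, the \emph{Cardy relation} --- obtained by analysing the one-parameter families of moduli interpolating the relevant closed--open and open--closed degenerations, equivalently the two ways an annulus carrying markings can pinch at its boundary --- asserts that, reading $\beta$ (of length $\leq N$) as a morphism $\eta_\beta\colon\Delta\to\mathbf{P}_N$, one has $\varepsilon_N\circ\eta_\beta=\mathcal{OC}(\beta)$ in $HF^*(\Delta,\Delta)=QH^*(X)$. Combining, $\varepsilon_N\circ\eta_\beta=e$, the splitting required, and the theorem follows \cite{Abouzaid:generation, RitterSmith, Sheridan3}.

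The technical heart, and the step I expect to dominate the work, is precisely this last package: the construction of $\mathcal{OC}$, the proof that $\mathcal{CO}$ and $\mathcal{OC}$ are adjoint through the cyclic structure (so that $\mathcal{CO}$ iso $\Rightarrow$ $\mathcal{OC}$ iso), and above all the Cardy relation $\varepsilon_N\circ\eta_\beta=\mathcal{OC}(\beta)$. Each rests on gluing and degeneration theorems for $1$-dimensional moduli of discs and annuli carrying both boundary and interior marked points, on compatibility with the associahedral and cyclic structures, and on consistent coherent orientations with the attendant signs; in the monotone setting one uses in addition that the monotonicity hypotheses of the setup exclude sphere and disc bubbles in all the $\leq 1$-dimensional moduli at issue, so the argument proceeds summand by summand. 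The remaining ingredients --- the diagonal-bimodule reduction, the K\"unneth identification, and the construction of the $\mathbf{P}_N$ and $\varepsilon_N$ --- are formal given the foundations of Section \ref{Sec:Fukaya}.
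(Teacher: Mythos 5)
Your proposal is correct in outline, and it is essentially the strategy of the references this theorem points to \cite{Abouzaid:generation, RitterSmith, Sheridan3}: resolve the diagonal in $X^-\times X$ by bar-type twisted complexes on the products $L_j\times L_i$, convert the hypothesis on $\mathcal{CO}$ into surjectivity of $\mathcal{OC}$ via the weak Calabi--Yau pairing, and split the evaluation $\mathbf{P}_N\to\Delta$ using the Cardy relation. The paper itself does something deliberately more modest: it only presents a toy model, with a single Lagrangian $L$ and only the length-zero map $\mathcal{CO}^0\colon QH^*(X;\lambda)\to HF^*(L,L)$ assumed to be an isomorphism. In that setting one avoids bimodules entirely: it suffices to show that $\mu^2\colon HF^*(L,K)\otimes HF^*(K,L)\to HF^*(K,K)$ hits the unit $1_K$, after which writing $1_K=\sum_j p_j\cdot q_j$ produces an explicit idempotent exhibiting $K$ as a summand of copies of $L$ in $\Tw^{\pi}\langle L\rangle$; and the unit is produced by the same one-parameter family of annuli you invoke, one degeneration being the nodal annulus representing the diagonal class $[X]\otimes \id+\cdots$ capped off by the trace on $HF^*(L,L)\cong QH^*(X;\lambda)$, the other being the composite through $HF^*(L,K)\otimes HF^*(K,L)$. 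So the geometric heart --- the annulus/Cardy degeneration --- is identical in both treatments; your version buys the theorem in the generality actually stated (several objects, the full Hochschild cohomology of $\scrA$), at the cost of the diagonal-bimodule and bar-resolution formalism and of the $\mathcal{CO}$--$\mathcal{OC}$ duality, which you correctly identify as the technical centre of gravity of the full proof.
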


It is instructive to consider a toy-model case of the situation considered in Theorem \ref{Thm:Generation}, which works at the level of the length zero part of Hochschild cohomology, where the closed-open map of \eqref{Eqn:COHH} reduces to the simpler \eqref{Eqn:ClosedOpen} considered before. Suppose there is a Lagrangian submanifold $L\in \scrF(X;\lambda)$ with the property that 
\[
\mathcal{CO}^0: QH^*(X;\lambda) \rightarrow HF^*(L,L)
\]
is an isomorphism. We will show that in that case $L$ split-generates $\scrF(X;\lambda)$.  Indeed, for any $K\in \scrF(X;\lambda)$, it will suffice to show the map
\begin{equation} \label{Eqn:HitUnit}
HF^*(L,K) \otimes HF^*(K,L) \longrightarrow HF^*(K,K)
\end{equation}
hits the unit $1_K$.  To check that, write  $1_K = \sum_{j=1}^N p_j \cdot q_j$ in \eqref{Eqn:HitUnit}. There are maps 
\[
\alpha = \sum_{j=1}^N \mu^2(p_j, \cdot) \quad \mathrm{and} \quad \beta = (\mu^2(q_1,\cdot),\ldots, \mu^2(q_N,\cdot))
\]
 which fit into a diagram
\[
\xymatrix{
\bigoplus_{j=1}^N \, HF^*(L,L)\ar@<+2pt>@{->}^-{\alpha}[rr] \ar@<-2pt>@{<-}_-{\beta}[rr] && HF^*(L,K)
& \alpha \circ \beta  =1_K} 
\]
which shows that $\beta \circ \alpha$ is idempotent, and has image a copy of $HF^*(L,K) \subset \oplus_j HF^*(L,L)$. Given this, $K \in \Tw^{\pi}\langle L\rangle $ is a direct summand of copies of $L$ (quasi-isomorphic to the image of the idempotent $\beta\circ\alpha$, which is introduced formally as an object in taking the split-closure). 

\begin{center}
\begin{figure}[ht]
\includegraphics[scale=0.5]{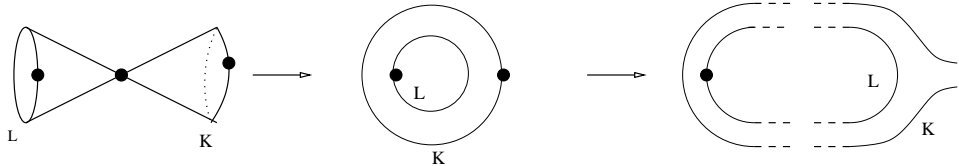}
\caption{A variant of the Cardy relation\label{Fig:Cardy2}}
\end{figure}
\end{center}

To see \eqref{Eqn:HitUnit} hits the unit, we study a 1-parameter family of annuli, see Figure \ref{Fig:Cardy2} (such arguments go by the name of Cardy relations, and go back in this context to \cite{BiranCornea}).   On the left side of that picture, we fix marked points on the $L$ and $K$ boundary components.  By considering that nodal annulus as a disc in $X\times X$ with an  incidence condition at the origin, the picture represents the image of the diagonal
\begin{equation} \label{Eqn:Nodal}
\Delta \subset QH^*(X) \otimes QH^*(X) \longrightarrow HF^*(L,L) \otimes HF^*(K,K).
\end{equation}
If tr$: HF^*(L,L) \cong QH^*(X;\lambda) \rightarrow \bC$ is the restriction of the usual trace on $QH^*(X)$ given by pairing with the fundamental class, we therefore get a class $(\tr \otimes \id)(\Delta) \in HF^*(K,K)$. On the other hand,  the annulus in the middle of Figure \ref{Fig:Cardy2} (with its two boundary marked points) can be broken at the other end of a one-dimensional degeneration into a strip with a boundary marked point and a three punctured disc.  Using the trace on $HF^*(L,L)$ to cap off the boundary marked point  in this setting  amounts to considering the composite 
\begin{equation} \label{Eqn:Smoothed}
\bK \rightarrow HF^*(L,K) \otimes HF^*(L,L) \otimes HF^*(K,L) \stackrel{\mathrm{tr}}{\longrightarrow} HF^*(L,K) \otimes HF^*(K,L) \stackrel{\mu^2}{\longrightarrow}HF^*(K,K). 
\end{equation}
Via this 1-parameter family of annuli, one sees that $(\tr\otimes \id)(\Delta)$ lies in the image of \eqref{Eqn:Smoothed}, and hence in the image of multiplication \eqref{Eqn:HitUnit}.  Since the diagonal is given by $[X]\otimes \id + $(other terms), and $QH^*(X) \rightarrow HF^*(K,K)$ is unital and factors through $QH^*(X;\lambda)$, one sees by comparing \eqref{Eqn:Nodal} and \eqref{Eqn:Smoothed} that the image of \eqref{Eqn:HitUnit} contains the unit, as required.

\subsection{Applications, III} We contine our running theme of illustrating the algebraic machinery in some typical situations; see \cite{FSS,Nadler} respectively \cite{AbouzaidSmith:plumbing}.

\subsubsection{Nearby Lagrangians\label{Sec:NearbyLag}}  It is a non-trivial theorem that every exact Lagrangian submanifold of the cotangent bundle is isomorphic in the Fukaya category to the zero-section, so the compact Fukaya category only contains one object.  The proof of this result goes via a description of a more interesting category which incorporates non-compact Lagrangian submanifolds, namely the cotangent fibres.  The essential point is that these fibres generate (suitable versions of) the Fukaya category; this fact was already prefigured in \eqref{Eqn:FibresShouldGenerate}.

There are several different approaches at a technical level, but any of them lead to the following. Let $L\subset T^*Q$ be an exact Lagrangian with vanishing Maslov class. The Floer cohomology groups $HF^*(T^*_xQ,L)$  form a flat bundle of $\Z$-graded vector spaces over $Q$, which we denote by $E_L$.  One associates to $L\subset T^*Q$ the underlying chain-level object, which is a $\bZ$-graded $dg$-sheaf $\scrE_L$  over $Q$ with fibre $CF^*(L, T_x^*Q)$, and argues that this association fully faithfully embeds the Fukaya category $\scrF(T^*Q)$ into a category of graded $dg$-sheaves on $Q$.  There is the usual local-to-global spectral sequence for computing Ext's of sheaves
\[
E_1 = H^*(Q; \mathcal{E}xt(\scrE_L,\scrE_L)) \ \Longrightarrow \ \mathrm{Ext}^*(H(\scrE_L), H(\scrE_L))
\]
and the target group can be identified with Floer cohomology $HF^*(L,L)$ by fullness and faithfulness of the embedding of $\scrF(T^*Q)$. The $E_2$-page of the spectral sequence then gives 
\begin{equation} \label{eq:ss}
E_2^{rs} = H^r(Q;\End^s(E_L)) \Rightarrow H^*(L;\bK)
\end{equation}
with $\End^*(E_L)$ the graded endomorphism 
bundle. Because $\pi_1(Q)=0$, $E_L$ is  a trivial bundle, so the $E_2$ page is a ``box'' $H^*(Q) \otimes \End(HF^*(T^*_x,L))$. The $E_2$ level differential goes from $(r,s)$ to $(r+2,s-1)$, and similarly for the higher pages. Hence, the bottom left and top right corners of the box necessarily survive to $E_\infty$. Since $E_{\infty}$ is concentrated in degrees $0\leq \ast \leq n=\mathrm{dim}(Q)$,  it follows that $HF^*(T^*_x,L) \iso \bK$ must be one-dimensional, and projection $L \rightarrow Q$ has degree $1$. Given that, the spectral sequence degenerates, yielding $H^*(L;\bK) \iso H^*(Q;\bK)$.  There is also a spectral sequence
\begin{equation} \label{eq:ss2}
E_1 = H^*(Q; \mathcal{E}xt(\scrE_Q,\scrE_L)) \ \Longrightarrow \ HF^*(Q,L).
\end{equation}
A similar analysis shows the edge homomorphism is an isomorphism in top degree $n$; that edge homomorphism has an interpretation in terms of a Floer product, and one concludes that 
\[
HF^n(T_x^*Q, L) \otimes HF^0(Q, T_x^*Q) \longrightarrow HF^n(Q,L) 
\] 
is an isomorphism,  for any $x\in Q$.  It then follows that there is an element $\alpha \in HF^0(Q,L)$ with the property that
\[
\mu^2(\alpha,\cdot): HF^*(T_x^*,Q) \rightarrow HF^*(T_x^*,L)
\]
is an isomorphism, but since fibres generate the category, that is exactly saying that $\alpha$ defines an isomorphism between $Q$ and $L$ in $\scrF(T^*Q)$.

\subsubsection{Mapping class groups}\label{Sec:MCGIII}
Part of  Theorem \ref{Prop:twists} is the statement that the algebraic twist functor $T_L$, which exists for any object $L \in \scrF(X)$, is invertible when $L$ is a sphere.  That argument can be made purely algebraic, and would apply equally well when $L=\Sigma$ is any integral homology sphere.  Let $M_{\Sigma} = T^*\Sigma\#T^*S^n$ be the manifold given by plumbing the given cotangent bundles, i.e. by capping off one fibre of $T^*\Sigma$ to a closed Lagrangian sphere (this surgery can be performed in the Stein category). The $\bZ_2$-graded exact Fukaya category $\scrF(M_{\Sigma})$ admits the  endofunctor $T_{\Sigma}$,  which is an equivalence since $\Sigma$ is a homology sphere. That equivalence is easily seen to be infinite order.  Suppose $\pi_1(\Sigma)$ is finite but not trivial.  We claim that $T_{\Sigma}$ is \emph{not} induced by any compactly supported symplectomorphism, i.e. \emph{there is no geometric Dehn twist}. (We introduce $M_\Sigma$ to give an argument wholly in terms of closed Lagrangians, rather than with the non-compact cotangent fibres.)

 Let $\iota: \tilde{\Sigma} \rightarrow \Sigma$ denote the universal covering, and $\pi: \tilde{M}_{\Sigma} \rightarrow M_{\Sigma}$  the induced covering. Fix a coefficient field $\bK$ of characteristic dividing the index of  $\iota$.   Suppose for contradiction that the algebraic twist $T_{\Sigma}$ is  induced by a compactly supported symplectomorphism $\tau$.  Considering $\tau \circ \tau$, or its inverse, and recalling the definition of the twist functor in Equation \ref{Eqn:Twist}, we infer that there is an exact Lagrangian submanifold $L \subset M_{\Sigma}$ representing the twisted complex 
\begin{equation} \label{Eqn:ImpossibleComplex}
\Sigma \leftarrow \Sigma \leftarrow S^n
\end{equation}
with the leftmost arrow given by multiplication by the fundamental class $[\Sigma]$, and where $S^n$ is the other core component.  Now recall Example \ref{Ex:CoveringSpace}. The inverse image of \eqref{Eqn:ImpossibleComplex} in $\scrF(\tilde{M}_{\Sigma})$ is represented by $\tilde{\Sigma} \leftarrow \tilde{\Sigma} \leftarrow \pi^{-1}(S^n)$, 
in which \emph{the left differential vanishes}: it is obtained by pullback from the fundamental class of $\Sigma$, so is zero in the coefficient field $\bK$ by our choice of characteristic of $\bK$.  

Being the preimage of a connected Lagrangian submanifold $L$ under a covering, all the components of $\pi^{-1}(L)$ are diffeomorphic and moreover related by deck transformations and hence autoequivalences of  $\scrF(\tilde{M}_{\Sigma})$. The complex $\Sigma \leftarrow S^n$ is the image of $\Sigma$ under the Dehn twist in $S^n$.  Therefore, both $\tilde{\Sigma}$ and $\tilde{\Sigma} \leftarrow \pi^{-1}(S^n)$ are represented by connected Lagrangians, hence are indecomposable objects (we are in a $\bZ$-graded situation, and $HF^0$ has rank $1$ for a connected Lagrangian). They are not in the same orbit under deck transformations, again by Example \ref{Ex:CoveringSpace}: on taking Floer cohomology with a cotangent fibre to one of the components of $  \pi^{-1}(S^n)$,  the Floer group has rank $0$ in one case, and $1$ in the other. The previous paragraph showed that, up to shifts,
\begin{equation} \label{Eqn:SplitPreimage}
\pi^{-1}(L) \cong \tilde{\Sigma} \oplus \left( \tilde{\Sigma} \leftarrow \pi^{-1}(S^n) \right)
\end{equation}
in the category $\scrF(\tilde{M}_{\Sigma})$.  We have now shown that the RHS of \eqref{Eqn:SplitPreimage} is not quasi-isomorphic to a direct sum of indecomposables lying in a single orbit of the covering group, which is a  contradiction.

\begin{Notes} For background on $A_{\infty}$-algebra and the Fukaya category, see \cite{FCPLT, FO3}.  For Hochschild cohomology and deformation theory arguments, see \cite{Seidel:HMSquartic, Seidel:HMSgenus2}.
\end{Notes}

\section{Examples}

We describe the Fukaya categories of various manifolds ``explicitly". The examples are real surfaces (with Lagrangian circles which bound no discs), Calabi-Yau surfaces (governed by Example \ref{Ex:CY}) or monotone varieties, so these categories can be defined by elementary means.

\subsection{The 2-torus} Take $\bK = \Lambda$. An application of Proposition \ref{Prop:SpheresGenerate} shows that the longitude $l$ and meridian $m$ split-generate $\scrF(T^2)$.  Indeed, $(\tau_l \tau_m)^6 = \id$, the corresponding Lefschetz fibration $\scrX \rightarrow \bP^1$ is a rational elliptic surface, and the cycle class $\mathcal{C}(w)$ counts the finitely many $(-1)$-curve sections, hence evaluates to a non-zero multiple of the top class in $H^2(T^2)$.  The circles $l$, $m$ bound no holomorphic discs so have self-Floer cohomology equal to $H^*(S^1)$.  The two circles meet transversely at a single point, which defines generators $[p]\in HF^0(l,m)$ and $[q]\in HF^1(m,l)$. Here we choose the grading for $p$ and that for $q$ is forced by Poincar\'e duality, which also  implies that the products $\mu^2(p,q)$ and $\mu^2(q,p)$ are non-trivial.  For grading reasons these are the only non-trivial products except those forced by unitality.  
It follows that the six-dimensional algebra $A = HF(l \oplus m, l \oplus m)$ can be described in terms of the quotient of the path algebra $\Pi\,Q$ of the quiver $Q$ below
\begin{equation*}
\xymatrix{
 l \ \ar@/^/^{[p]}[r] & \ m\ar@/^/^{[q]}[l]
}
\end{equation*}
by the ideal generated by the compositions $a\rightarrow b\rightarrow a \rightarrow b$ and $b\rightarrow a \rightarrow b \rightarrow a$, i.e. as $\Pi\,Q / \langle J\rangle^3$, for $J$ the ideal generated by all arrows.  For quotients of path algebras by powers of the arrow ideal, the bar complex has large acyclic subcomplexes, which means that Hochschild cohomology can be computed rather tractably. 
Starting from here, Lekili and Perutz \cite{LekiliPerutz} determined all possible $A_{\infty}$-structures on the 6-dimensional algebra $A$.

\begin{Theorem}[Lekili-Perutz]\label{Thm:LPelliptic}
Suppose char$(\bK) \not\in \{2,3\}$. Any minimal $A_{\infty}$-structure on $A$ is gauge-equivalent to 
%one with vanishing multiplications $\mu^i$ for $i\in \{3,4,5\}$. Such structures are then classified up to equivalence by invariants 
%\[
%(m_6, m_8) \in \, HH^2(A,A)^{-4} \oplus HH^2(A,A)^{-6} \, \cong\,  \bK \oplus \bK.
%\]
%Moreover, every pair of invariants is realised by 
the subcategory of the derived category of sheaves on a unique cubic curve in the family
\[
\{y^2z = 4x^3 - pxz^2 - qz^3 \} \subset \mathbb{A}^2_{p,q}  \times \bP^2
\]
generated by the structure sheaf $\mathcal{O}$ and skyscraper sheaf $\mathcal{S}_{[0,1,0]}$.
\end{Theorem}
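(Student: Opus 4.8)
The plan is to reduce the classification to a Hochschild-cohomology computation, following the deformation-theoretic framework of Section~\ref{Subsec:FiniteDeterminacy}. A minimal $A_\infty$-structure on $A$ extending its associative product is a Maurer-Cartan element $\alpha=(\mu^3,\mu^4,\dots)\in CC^1(A,A)$ with $\alpha^1=\alpha^2=0$, and two such are gauge-equivalent exactly when they differ by the action of $CC^0(A,A)$. So the first task is to compute $HH^*(A,A)$ together with its internal weight grading (by path length, equivalently by the $\mathbb{Z}$-grading carried by $A$, with $p$ in degree $0$ and $q$ in degree $1$) and the Gerstenhaber bracket. Since $A=\Pi\,Q/\langle J\rangle^3$ is the quotient of a path algebra by a power of the arrow ideal — the defining relations being the length-three paths $pqp$ and $qpq$ — it admits a small projective bimodule resolution, and as noted in the excerpt the bar differential has large acyclic pieces; a direct computation then yields $HH^*(A,A)$ as a bigraded space, with the Lie bracket read off from Schouten-type formulas.

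Next I would exploit the weight grading for finite determinacy. Each hypothetical operation $\mu^k\colon A^{\otimes k}\to A$ has cohomological degree $2-k$, and since $A$ is concentrated in degrees $0$ and $1$ this pins down the possible weights of Hochschild classes that can carry a nontrivial deformation; one checks that the relevant (negative-weight) part of $HH^2(A,A)$ is small and that of $HH^3(A,A)$ controls the obstructions. Hence the Maurer-Cartan variety is cut out by finitely many equations in finitely many unknowns, and the gauge group $CC^0$ acts so as to kill all but a bounded number of the $\mu^k$: after gauge-fixing one may take $\mu^{\geq k}=0$ for an explicit small $k$. This is the analogue, for this particular algebra, of classical finite determinacy; it identifies the set of minimal $A_\infty$-structures up to gauge with an explicit affine variety $V$ of small dimension, equipped with a residual rescaling symmetry of the quiver arrows.

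In parallel, on the algebro-geometric side I would show that the Weierstrass family realises these structures. For a cubic $E=E_{p,q}$ in the stated family, $\Ext^*_E(\mathcal{O}\oplus\mathcal{S}_{[0,1,0]},\mathcal{O}\oplus\mathcal{S}_{[0,1,0]})$ is supported in degrees $0$ and $1$ (as $E$ is a curve), and a Serre-duality computation using $\omega_E\cong\mathcal{O}_E$ shows this graded algebra is isomorphic to $A$; a minimal model of the dg-enhancement of the subcategory $\langle\mathcal{O},\mathcal{S}_{[0,1,0]}\rangle\subset D^b(E)$ therefore gives a point of $V$. The heart of the matter is to show this assignment $E_{p,q}\mapsto[\text{$A_\infty$-structure}]$ surjects onto $V/(\text{gauge})$ with fibres exactly the projective-isomorphism classes of the cubics, i.e. the $j$-invariant: surjectivity amounts to computing the leading ($\mu^3$) term of the $B$-side structure and matching it with the versal deformation direction in $HH^2(A,A)$, while the fibre statement follows by reconstructing the Weierstrass data of $E$ from the subcategory (the structure sheaf and a point determine a line bundle of degree $1$, hence an embedding). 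The hypothesis $\mathrm{char}(\bK)\notin\{2,3\}$ enters to guarantee that every smooth or nodal plane cubic is projectively equivalent to one in the displayed form, that $j$ is then a complete invariant, and to avoid small-characteristic collapses in the Hochschild computation and in the gauge action.

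The step I expect to be the main obstacle is the matching in the last paragraph: proving that the geometric family \emph{exhausts} all gauge-equivalence classes (not merely lands in $V$), which requires either enough explicit $B$-side products to see that they vary with $(p,q)$ transversally to the gauge orbits, or a more structural identification of $V/(\text{gauge})$ with the coarse moduli of cubics a priori. The Hochschild computation itself is laborious but routine given the small resolution.
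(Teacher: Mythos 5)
A preliminary remark: the paper does not actually prove Theorem \ref{Thm:LPelliptic} --- it is quoted from \cite{LekiliPerutz} (see also \cite{LekiliPerutz2}), and the surrounding text only gestures at the method (tractability of $HH^*(A,A)$ for quotients of path algebras by powers of the arrow ideal, together with the finite-determinacy philosophy of Section \ref{Subsec:FiniteDeterminacy}). Measured against that method, which is indeed the one Lekili and Perutz follow, your outline is the right one: compute the bigraded Hochschild cohomology from a small bimodule resolution, use the internal grading to get finite determinacy and identify the Maurer--Cartan moduli with a small affine variety, produce the $B$-side structures as minimal models of the endomorphism dg-algebra of $\mathcal{O}\oplus\mathcal{S}_{[0,1,0]}$, and match; you also correctly isolate exhaustion as the hard step. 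One caveat: path length and cohomological degree are \emph{not} equivalent gradings on $A$ (the arrows $p$ and $q$ both have length one but degrees $0$ and $1$), and the determinacy argument uses the interplay of the two; this is fixable but not cosmetic.

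The genuine gap is your description of the fibres of the classification map. The theorem asserts that each gauge-equivalence class is realised by a \emph{unique} cubic in the Weierstrass family, i.e.\ a unique point of $\mathbb{A}^2_{p,q}$; your claim that the fibres of $(p,q)\mapsto[\text{$A_\infty$-structure}]$ are the projective-isomorphism classes of cubics (the $j$-invariant) would make this map non-injective, since $(p,q)$ and $(t^4p,t^6q)$ give isomorphic curves. The two statements are incompatible, and the tension is resolved only by pinning down the notion of gauge equivalence: the theorem requires the restricted notion in which the linear term of the gauge transformation is the identity, so that rescaling the arrows of $Q$ --- which implements $(p,q)\mapsto(t^4p,t^6q)$ --- is \emph{not} a gauge transformation. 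Your own parenthetical, that the Weierstrass data can be reconstructed from the subcategory, in fact shows the fibres are singletons and contradicts your $j$-invariant claim. As written, your argument would establish only the coarser classification by isomorphism type of the cubic, not the stated bijection with $\mathbb{A}^2_{p,q}$. Beyond this, the proposal defers all of the quantitative content --- the weight-by-weight computation of $HH^2(A,A)$ and $HH^3(A,A)$, and the identification of the leading $B$-side products (Massey products on the Weierstrass curve) that shows the family sweeps out the whole deformation space --- which is where the normal form $y^2z=4x^3-pxz^2-qz^3$ and the exclusion of characteristics $2$ and $3$ actually earn their keep.
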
 
Lekili and Perutz also show that if one takes a point on the discriminant $\{p^3 = 27 q^2\}$ where the cubic curve acquires a node, the category reproduces the subcategory of $\scrF(T^2 \backslash \{pt\})$ generated by $\{l,m\} \subset T^2\backslash \{pt\}$, which in particular shows that the latter category is not formal -- even though the punctured 2-torus is exact and the only non-trivial holomorphic polygons are constant polygons (which, crucially, are not transversely cut out, so must be counted ``virtually").  This illustrates the difficulty in making direct computations in the Fukaya category without resort to algebraic classification results.

\subsection{The genus two surface\label{Subsec:genus2}}  Consider a hyperelliptic curve $\Sigma_2 \rightarrow \bP^1$ branched at the origin and the fifth  roots of unity.  Fix a primitive $\theta$ for the pullback of $\omega_{\Sigma_2}$ to the unit circle bundle of $T\Sigma_2$.  Although not monotone, the curve $\Sigma_2$ has a well-defined ``balanced" Fukaya category (linear over $\bC$) whose objects are Lagrangian circles $\gamma: S^1 \hookrightarrow \Sigma_2$ for which $\int_{\gamma} (D\gamma)^*\theta = 0$.  (Non-contractible isotopy classes of curves  contain unique balanced representatives; energy and index are correlated for polygons with balanced boundary conditions; finally, the category $\scrF(\Sigma_2)$ is independent of the choice of $\theta$ up to quasi-isomorphism.)  A collection of $5$ balanced curves $\gamma_i$ is given by the matching paths depicted on the left of  Figure \ref{Fig:BalancedSphere}, and these split-generate the Fukaya category by another application of Proposition \ref{Prop:SpheresGenerate}.

\begin{center}
\begin{figure}[ht]
\includegraphics[scale=0.4]{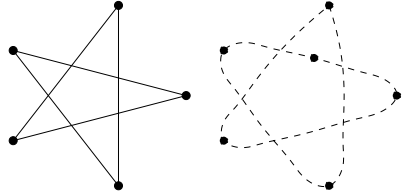}
\caption{Generators for $\scrF(\Sigma_2)$ and a holomorphic hexagon\label{Fig:BalancedSphere}}
\end{figure} 
\end{center}

The choice of the  $\gamma_i$ as split-generators is motivated by Seidel's observation \cite{Seidel:HMSgenus2}  that
\[
\oplus_{i,j} HF^*(\gamma_i, \gamma_j) \ \cong \ \Lambda^*(V) \rtimes \bZ_{5}
\]
for a 3-dimensional vector space $V$.  $A_{\infty}$-structures can therefore be approached using the tools of Section \ref{Subsec:FiniteDeterminacy} (which admit straightforward equivariant versions in the presence of a finite group action), i.e. described in terms of polyvector fields.  Direct computation shows  the $A_{\infty}$-structure is given by a pair $(W,\eta)$ comprising a formal function and formal 2-form with 
\[
W=z_1z_2z_3 + z_1^{5} + z_2^{5} + z_3^{5} + O(6)
\]
(we are ignoring signs, which can be massaged by suitable changes of basis in the Floer complexes). The low-order non-trivial coefficients arise from higher multiplications $\mu^3$ and $\mu^5$ determined by ``manifest" holomorphic polygons: the triangles which sit in the  corners of the pentagram, or the pentagons such as that whose boundary is sketched on the right side of Figure \ref{Fig:BalancedSphere} (projecting to $\bP^1$ with multiplicity two over the inner pentagon). In each case these polygons have an additional boundary marked point at which the map is smooth but constrained to lie on a cycle on a given Lagrangian, which occurs as two adjacent boundary conditions in the associated quadrilateral respectively hexagon.  Since $W$ has an isolated singularity at $0$, it is equivalent by formal change of variables to a polynomial, and in fact to its degree $\leq 5$ Taylor series. This gives a complete description of the split-closure of $\scrF(\Sigma_2)$ in terms of modules over an explicit $A_{\infty}$-algebra. 
%One can check that
%\[
%HH^*(\scrF(\Sigma_2),\scrF(\Sigma_2)) \cong H^*(\Sigma_2) \cong \mathrm{Jac}(W_{\leq 5})
%\]
%where the left hand side is given by the cohomology of the differential $a \mapsto [W_{\leq 5},a]$ on $\bC[[z_1,z_2,z_3]]$. This would be false if one worked with polynomial rings and not power series rings: the required gauge transformations really are \emph{formal} changes of variables. 

\subsection{The 4-torus\label{Sec:4torus}}  Take $\bK = \Lambda$. A special case of Theorem \ref{Thm:LPelliptic} (which was known earlier) shows that $D^{\pi}\scrF(T^2) \simeq D^b(E)$, where $T^2$ is the square symplectic torus and $E$ the Tate curve.  Fukaya categories obey a kind of K\"unneth theorem  \cite{AbouzaidSmith}; using (the $A_{\infty}$-refinement of) Wehrheim-Woodward's  quilt theory, Section \ref{Sec:Quilt} and \eqref{Eqn:QuiltFunctor}, one can identify $D^{\pi}\scrF(T^2 \times T^2)$ with the category of endofunctors of $D^{\pi}\scrF(T^2) \simeq D^b(E)$. That implies that $\scrF(T^4)$ is split-generated by the four Lagrangian two-tori $l\times l, l\times m, m\times l, m\times m$, and
\begin{equation} \label{Eqn:HMST4}
D^{\pi} \scrF(T^4) \simeq D^b(E\times E).
\end{equation}
\eqref{Eqn:HMST4} has concrete consequences for the symplectic topology of the 4-torus.  
Suppose $\phi: T^4 \rightarrow T^4$ is a symplectomorphism which acts trivially on cohomology. It then has a well-defined flux, meaning that for any 1-cycle $\gamma \in H_1(T^4)$, one can pick a 2-chain $C_{\gamma}$ with $\partial C_{\gamma} = \gamma \cup \phi(\gamma)$ and consider  $\int_{C_{\gamma}} \omega \in \bR / \bZ$, where dividing out by the periods of the symplectic form removes the non-uniqueness in the choice of $C_{\gamma}$.  

\begin{Proposition} If $\phi$ has trivial flux, then any symplectomorphism in the Hamiltonian isotopy class of $\phi$ with non-degenerate fixed points has at least $16$ fixed points.
\end{Proposition}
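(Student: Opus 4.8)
The plan is to show that $\dim_{\Lambda} HF^*(\phi) = 16$, where $HF^*(\phi) = HF^*(\Delta_{T^4}, \Gamma_\phi)$ is the fixed-point Floer cohomology; since this group is invariant under Hamiltonian isotopy of $\phi$ and since the generators of $CF^*(\psi)$ are precisely the fixed points of any non-degenerate representative $\psi$ of the Hamiltonian isotopy class of $\phi$, the bound $\#\mathrm{Fix}(\psi) \geq \dim_{\Lambda} HF^*(\phi) = 16$ is then automatic. Note that $\phi$ is only assumed to act trivially on cohomology, not to lie in $\Symp_0(T^4)$, so one cannot simply invoke the definition of the flux group together with the Arnol'd conjecture; the content is that mirror symmetry computes $HF^*(\phi)$ directly.

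First I would transport the problem to the Fukaya category. A symplectomorphism $\phi$ induces an autoequivalence $\phi_*$ of the split-closed derived Fukaya category $D^\pi\scrF(T^4)$, and $HF^*(\phi)$ depends only on $\phi_*$ up to isomorphism. Indeed, by the K\"unneth theorem for Fukaya categories \cite{AbouzaidSmith} and smoothness of $\scrF(T^4)$ (a consequence of the generation theorem, since $\mathcal{CO}$ is an isomorphism here), an isomorphism $\phi_* \cong \id$ would identify $\Gamma_\phi$ with the diagonal, whence $HF^*(\phi) \cong HF^*(\id) = QH^*(T^4)$. Using \eqref{Eqn:HMST4}, $\phi_*$ becomes a derived autoequivalence of the abelian surface $E\times E$, so it suffices to prove $\phi_*\cong\id$.

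The hard part will be to show that trivial flux forces $\phi_* \cong \id$. Because $\phi$ acts trivially on $H^*(T^4)$, the autoequivalence $\phi_*$ acts trivially on the cohomology, hence on the numerical Grothendieck group, of $E\times E$. By Orlov's classification of derived autoequivalences of an abelian variety $A$ --- an extension of a discrete ``isometry'' group $U(A\times\hat A)$ by the ``inner'' group $(A\times\hat A)\times\bZ[1]$ --- the isometry factor acts faithfully on cohomology and so is trivial here, while the shift factor, being even and moreover trivial since $\phi_*$ is strictly grading-preserving (it comes from a symplectomorphism of the $\bZ$-graded Calabi--Yau category $\scrF(T^4)$), also drops out. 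Thus $\phi_*$ is the composite of a translation $t_a$, $a\in E\times E$, with tensoring by a degree-zero line bundle $\mathcal{L}$, the pair $(a,\mathcal{L})$ ranging over $(E\times E)\times\widehat{E\times E}$. I would then match $(a,\mathcal{L})$ with the flux of $\phi$ through the SYZ dictionary for $T^4 = T^2\times T^2$ --- translations of $T^4$ in base directions being mirror to translations on $E\times E$, in fibre directions to $\otimes\,\mathcal{L}$, with the flux recording exactly the resulting displacement modulo periods --- along the lines of the flux homomorphism on Hochschild cohomology and the K\"unneth description of $\scrF(T^4)$, cf. \cite{Seidel:Flux, AbouzaidSmith}. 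Trivial flux then yields $(a,\mathcal{L}) = (0,\mathcal{O})$, i.e. $\phi_* \cong \id$.

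Given this, $HF^*(\phi) \cong HF^*(\id) = QH^*(T^4) \cong H^*(T^4;\Lambda)$ has total rank $\sum_i b_i(T^4) = 16$, so any non-degenerate symplectomorphism in the Hamiltonian isotopy class of $\phi$ has at least $16$ fixed points. The genuinely delicate point throughout is the identification of the flux of $\phi$ with the inner parameter of $\phi_*$: both invariants are continuous and vanish on Hamiltonian isotopies, and matching them amounts to knowing that translations of $T^4$ account, up to Hamiltonian isotopy, for the entire image of the flux map inside $\Auteq(D^b(E\times E))$.
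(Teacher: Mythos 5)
Your proposal is correct and follows essentially the same route as the paper: pass through the equivalence $D^{\pi}\scrF(T^4)\simeq D^b(E\times E)$, invoke the Mukai--Orlov--Polishchuk description of autoequivalences acting trivially on cohomology as the inner torus $(E\times E)\times\widehat{E\times E}$ (modulo shift), identify that torus with the image of flux under the SYZ dictionary so that trivial flux forces $\phi_*\cong\id$, and conclude $HF^*(\phi)\cong HF^*(\id)\cong H^*(T^4;\Lambda)$ of rank $16$. Your version is somewhat more careful than the paper's -- in particular in disposing of the shift via grading-preservation and in flagging the flux/inner-parameter matching as the genuinely delicate step -- but it is the same argument.
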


\begin{proof}
The autoequivalences of the category $D^b(E\times E)$ are completely understood through work of Mukai, Orlov and Polishchuk, see \cite{Orlov:abelian, Pol:weil};  those that act trivially on cohomology are generated, modulo shift, by tensoring by line bundles and by translations of $E\times E$.  On the symplectic side, these correspond to the autoequivalences which either translate $T^4$, and have non-zero flux, or equip Lagrangians with non-trivial local coefficients, which act trivially at the level of spaces, i.e. on the points of $T^4$. Hence, $\phi$ has the same Floer cohomology as the identity autoequivalence of $\scrF(T^4)$ (or symplectomorphism of $T^4$), which is just $H^*(T^4)$.
\end{proof}

\subsection{Quadric hypersurfaces}\label{Sec:Quadrics}

Let $X=Q \subset \bP^{2k+1}$ denote an even-dimensional quadric hypersurface. Any quadric has at most one node, so $X$ contains a distinguished Lagrangian sphere $L$ up to isotopy, the unique such which is the vanishing cycle of an algebraic degeneration. For $k>0$ this has minimal Maslov number $2c_1(Q) > 2$ so lies in the zero-summand $\scrF(Q;0)\subset \scrF(Q)$. 

\begin{enumerate} 
\item  Let $\scrX \rightarrow \bP^1$ denote the total space of a Lefschetz pencil of quadric hypersurfaces: all the vanishing cycles are isotopic to the fixed Lagrangian $L$. It is straightforward to check that the only holomorphic sections are those coming from blowing up the base-locus of the pencil, hence evaluate in a single fibre to $Q \cap Q' \subset Q$ which is proportional to the first Chern class. Hence Proposition \ref{Prop:SpheresGenerate} applies, and $L$ split-generates $\scrF(Q;0)$. 

\item Alternatively, consider the map $QH^*(Q;0) \rightarrow HF^*(L,L)$.  Both spaces have rank 2, the map is unital so non-trivial in degree $0$, and since $[L] \neq 0 \in H_{2k}(Q)$  the map is non-trivial in degree $2k$ (both sides are mod $4k$ graded), so the second version of the generation criterion also applies.
\end{enumerate} 

The algebra $HF^*(L,L)$ is semi-simple.  Generation by $L$ implies that $D^{\pi}\scrF(Q;0)$ is essentially trivial (equivalent to modules over a sum of two fields, i.e. pairs of vector spaces), see \cite{Smith:HFQuadrics}.   Any simply-connected Lagrangian $K \subset Q$ defines such a module, whose endomorphism algebra is on the one hand $HF^*(K,K)$ and on the other is a direct sum of two matrix algebras.  

Since $N_{min}(K) = 4k$, the pearl model of Proposition \ref{Lem:Oh} shows that the Oh spectral sequence collapses for grading reasons, and $HF^*(K,K) \cong H^*(K)$ additively.  A fairly simple algebro-geometric computation \cite{Beauville} shows that the quantum cohomology $QH^*(Q)$ contains an invertible element $a$ of degree $2k$. Since $\mathcal{CO}^0$ is a unital ring homomorphism, product with $\mathcal{CO}^0(a)$ defines an automorphism of (the $\bZ/4k$-graded group)  $HF^*(K,K)$, which, considering the pearl model for multiplication mentioned in the proof of Proposition \ref{Lem:Oh}, is given by  classical cup-product with $a|_K$ and corrections of degree $\leq 2k-N_{min}(K) = -2k$.  It follows that $H^*(K)$ vanishes in degrees $0<\ast< 2k$, so $K$ is a homology sphere.  Since moreover $H^*(K)$ is a sum of two matrix algebras, it must be the sum of two one-dimensional such algebras, so $K$ defines the same module as $L$.   It follows that $K$ is  isomorphic in $\scrF(Q;0)$ to $L$, whence $K \cap L \neq \emptyset$, and no two simply-connected Lagrangians can be Hamiltonian disjoined from one another.    

Any symplectomorphism of $Q$ co-incides in $\Auteq(\scrF(Q;0)) \cong \bZ_2$ with the identity or  the Dehn twist $\tau_L$ which swaps the two idempotent summands of $HF^*(L,L)$. Since the coarse moduli space $\EuScript{Q}$ of quadrics has fundamental group $\bZ_2$, this means that the map of Question \ref{Question:Monodromy} is injective in this case.

\subsection{Toric Fano varieties} Let $X$ be a toric Fano variety, with moment map $\mu: X \rightarrow \frak{t}^*$ and moment image $\Delta$.  
Maslov 2 discs with boundary on a fibre of the moment map are enumerated by the codimension one facets of the moment polytope $\Delta$, the counts of which can be assembled into a Laurent polynomial $W: \mathrm{int}(\Delta) \rightarrow \Lambda$ whose monomial terms are determined by the normal vectors to the facets of $\Delta$, see \cite{Cho,Cho-Oh}.  (The notation reflects the fact that this fits into the general picture of potential functions mentioned in passing in Section \ref{Sec:Mirror}. The divisor $D$ is the toric boundary of $X$, the moment map defines the Lagrangian fibration of $X\backslash D$, and the mirror $\Cech{U} \cong (\bC^*)^n$.   For $\bP^2$, $W(x,y) = x+y+\lambda/xy$.) Example \ref{Ex:ToricFibre} implies that the Floer cohomology of the given fibre of the moment map is non-trivial with some local system, i.e. $\partial_1 = 0$,  if and only if the relevant point of the moment polytope is a critical value of $W$.   

Suppose $X$ has even complex dimension $n=2k$ and the spectrum of quantum product $\ast c_1(X)$ is simple, so $QH^*(X) = \oplus_{\lambda} QH^*(X;\lambda)$ splits into a sum of fields. 
There are then $\mathrm{rk}(QH^*(X))$ distinct local systems $\xi_j \rightarrow L$ over the toric fibre $L$ lying over the barycentre  $b\in\Delta$ with respect to which $HF^*(\xi_j, \xi_j)$ is non-zero, and in fact in each case then a Clifford algebra (see e.g. \cite{Cho-Oh, FO3:toric, RitterSmith}). The object $(\xi_j \rightarrow L)$ splits into a sum of idempotents which are pairwise isomorphic up to shift; the even-dimensional Clifford algebra is a matrix algebra, and the closed-open map
\[
QH^*(X;\lambda_j) \longrightarrow \bK \subset HH^*(Cl_n, Cl_n) 
\] is an isomorphism onto the endomorphism subalgebra of any given idempotent summand, since the map is unital and both sides have rank 1. Theorem \ref{Thm:Generation} implies that the Fukaya category is split-generated by $(L,\xi_j)$, so any  Lagrangian $K$ with $HF(K,K)\neq 0$ must intersect $\mu^{-1}(b)$.  

%\begin{Remark}
%There is a basic connection between coherent sheaves on a toric variety and constructible sheaves on a real torus. Combined with mirror symmetry, this becomes one facet of   
%a new viewpoint on Lagrangian intersection problems that has emerged through work of Tamarkin,  Nadler, Zaslow and others, relating Fukaya categories to categories of constructible sheaves in topology.  
%Some have espoused the view that this connection may in due course eliminate holomorphic curve theory from the Fukaya category altogether. Whether this is an optimistic or pessimistic prognosis is apparently a matter of contention.   
%\end{Remark}

\subsection{The quartic surface}  \label{Sec:K3} Let $Q= \{z_0^4+\cdots + z_3^4 = 0\}$ be the quartic $K3$ surface.  The pencil of quartics defined by
\begin{equation}\label{pencil}
\lambda(z_0^4+ \cdots z_3^4) + z_0z_1z_2z_3
\end{equation}
is Lefschetz away from the fibre at $\lambda = 0$, which is a union of co-ordinate hyperplanes.  There are 64 nodal fibres, which yields a collection of 64 Lagrangian spheres $V_i$ in $Q$. A word in positive Dehn twists in these spheres acts on $\scrF(Q)$ by a shift \cite{Seidel:HMSquartic}, which implies by a variant of  Proposition \ref{Prop:SpheresGenerate} that these spheres split-generate (the map in \eqref{eqn:concatenate} now vanishes for degree reasons).  Exploiting the symmetry properties of the situation, Seidel proved that
\[
\scrA(Q) \, = \, \oplus_{i,j} HF^*(V_i, V_j) \, \cong \, \Lambda(W) \rtimes \Gamma_{64}
\]
for a four-dimensional complex vector space $W$ and $\Gamma_{64} \subset H \subset SL(W)$ the finite subgroup of order 4 elements of a maximal torus $H$ of $SL(W)$.  The Hochschild cohomology $HH^*(\Lambda(W)\rtimes \Gamma_{64})$ contains a summand $HH^*(\Lambda(W))^{\Gamma_{64}}$ of equivariant polyvector fields.  The subgroup corresponding to non-curved deformations (where the Lagrangians stay unobstructed) has rank 7 in degree $2$, and there is a \emph{unique} class invariant under the $\bZ_4$-subgroup of $GL(W)$ which cyclically permutes the co-ordinates. This symmetry preserves the pencil \eqref{pencil}, and the $A_{\infty}$-structure on $\scrF(Q)$ can be taken invariant in this sense.  That means that at any given order in the Novikov parameter, the relevant deformation is unique up to scale (spanned by the invariant polynomial $y_0y_1y_2y_3$, with $y_i$ co-ordinates on $W^*$).  The upshot is that there is a unique $\bZ_4$-equivariant  non-formal $A_{\infty}$-structure on $\scrA(Q)$ up to gauge equivalence.  Seidel checks non-formality \cite{Seidel:HMSquartic}, and that pins down the quasi-isomorphism class of $D^{\pi}\scrF(Q)$. 

In both this case and that of the genus 2 curve considered previously, the appearance of an exterior algebra is rather magical. There is a more conceptual explanation \cite{Seidel:HMSgenus2, Sheridan}, coming from a canonical immersed Lagrangian $P \subset \bP^n\backslash \{(n+2) \, \mathrm{hyperplanes}\}$ which has exterior algebra Floer cohomology, and for which the collections of circles $\{\gamma_i\} \subset \Sigma_2$ or spheres $\{V_i\} \subset Q$ arise as the total preimage of $P$ under a ramified covering of projective space.  The construction of $P$ is in turn inspired by ideas related to the SYZ viewpoint on mirror symmetry.

\begin{Notes} For $T^2$, see \cite{PolishchukZaslow, PZ2, LekiliPerutz, LekiliPerutz2}. For superpotentials and toric varieties, see \cite{Auroux:slag, Cho, Cho-Oh, FO3:toric, RitterSmith, Woodward}. For Fukaya categories of higher genus surfaces, Calabi-Yau or Fano hypersurfaces and some complete intersections, see \cite{Efimov, Sheridan2, Sheridan3, Smith:HFQuadrics}.
\end{Notes}

%%%%%%%%%%%%%%%%%%%%%%%%%%%%%
\section{Further directions}

 Once one accepts the Fukaya category as  an object of basic interest in symplectic topology, it is natural to consider invariants of symplectic manifolds defined through that category, or properties of symplectic manifolds which are natural from the viewpoint of the Fukaya category.

\renewcommand{\P}{\mathcal{P}}

%%%%%%%%%%%%%%%%%%%%%%%%%%%%%%
\subsection{Stability conditions}  A triangulated category $\CC$ has an associated complex manifold Stab$(\CC)$ of stability conditions \cite{Bridgeland}, which under mild hypotheses on $\CC$ is finite-dimensional.  Stab($\CC)$ carries an action of the group $G(\CC)=$Auteq$(\CC)$ of autoequivalences, and can be viewed as an attempt to build directly from $\CC$ a geometric model for the classifying space $BG(\CC)$: in the few known cases Stab$(\CC)$ has or is conjectured to have contractible universal cover (cf. \cite{Woolf}). 

Let $K(\CC)$ denote the Grothendieck group of $\CC$, the free abelian group on  $\Ob \, \CC$ modulo relations from exact triangles $[\mathrm{Cone}(a: A \rightarrow B)] = [B]-[A]$.  A \emph{stability condition} $\sigma=(Z,\P)$ on   $\CC$
consists of
a group homomorphism
$Z\colon K(\CC)\to\bC$ called the \emph{central charge},
and full additive
subcategories $\P(\phi)\subset\CC$ for each $\phi\in\bR$,
which together satisfy the following axioms:
\begin{itemize}
\item[(a)] if $E\in \P(\phi)$ then $Z(E)\in \bR_{>0}\cdot e^{i\pi\phi}\subset \bC,$ \smallskip
\item[(b)] for all $\phi\in\bR$, $\P(\phi+1)=\P(\phi)[1]$,\smallskip
\item[(c)] if $\phi_1>\phi_2$ and $A_j\in\P(\phi_j)$ then $\Hom_{\CC}(A_1,A_2)=0$,\smallskip
\item[(d)] for each $0 \neq E\in\CC$ there is a finite sequence of real
numbers $\phi_1>\phi_2> \dots >\phi_k$ 
and a collection of triangles
\[
\xymatrix@C=.5em{
0_{\ } \ar@{=}[r] & E_0 \ar[rrrr] &&&& E_1 \ar[rrrr] \ar[dll] &&&& E_2
\ar[rr] \ar[dll] && \ldots \ar[rr] && E_{k-1}
\ar[rrrr] &&&& E_k \ar[dll] \ar@{=}[r] &  E_{\ } \\
&&& A_1 \ar@{-->}[ull] &&&& A_2 \ar@{-->}[ull] &&&&&&&& A_k \ar@{-->}[ull] 
}
\]
with $A_j\in\P(\phi_j)$ for all $j$;
\item[(e)] (the support property) for some norm $\|\cdot\|$ on $K(\D)\tensor\bR$ there is a constant $C>0$ such that $\|\gamma\|< C\cdot |Z(\gamma)|$
for all classes $\gamma\in K(\D)$  represented by $\sigma$-semistable objects in $\D$.
\end{itemize}

\begin{Theorem}[Bridgeland] 
\label{basic}
The space $\Stab(\CC)$ has the structure of a complex manifold, such that the forgetful map
\[\pi\colon \Stab(\CC)\lra \Hom_{\bZ}(K(\CC),\bC)\]
taking a stability condition to its central charge,
is a local isomorphism.
\end{Theorem}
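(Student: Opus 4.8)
The plan is to follow Bridgeland's deformation argument, upgraded by the support property (e) so that the target of $\pi$ may be taken finite-dimensional. First I would topologise $\Stab(\CC)$ by the generalised metric
\[
d(\sigma_1,\sigma_2)\ =\ \sup_{0\neq E\in\CC}\Big\{\,|\phi^-_{\sigma_1}(E)-\phi^-_{\sigma_2}(E)|,\ |\phi^+_{\sigma_1}(E)-\phi^+_{\sigma_2}(E)|,\ \big|\log\tfrac{m_{\sigma_1}(E)}{m_{\sigma_2}(E)}\big|\,\Big\},
\]
where $\phi^\pm_\sigma(E)$ are the top and bottom phases occurring in the decomposition (d) of $E$ and $m_\sigma(E)=\sum_i|Z_\sigma(A_i)|$ is its total mass. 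This makes $\Stab(\CC)$ a topological space and makes $\pi$ continuous, since $|Z_{\sigma_1}(\gamma)-Z_{\sigma_2}(\gamma)|$ is controlled by $d(\sigma_1,\sigma_2)$ first on $\sigma$-semistable classes and then, additively along HN filtrations, on all of $K(\CC)$.

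The heart of the matter is the deformation lemma. Fix $\sigma_0=(Z_0,\calP_0)\in\Stab(\CC)$ with support constant $C$ and norm $\|\cdot\|$, and let $0<\epsilon<\tfrac18$. I would show there is $\eta>0$ such that every group homomorphism $W\colon K(\CC)\to\bC$ with $\|W-Z_0\|_{\mathrm{op}}<\eta$ — equivalently, $|W(E)-Z_0(E)|<\sin(\pi\epsilon)\,|Z_0(E)|$ for every $\sigma_0$-semistable $E$, the two being related through (e) — is the central charge of a \emph{unique} stability condition $\tau=(W,\calQ)$ with $d(\sigma_0,\tau)<\epsilon$. One constructs $\calQ$ from $\calP_0$: for $\psi\in\bR$ the candidate $\calQ$-semistable objects of phase $\psi$ all lie in $\calP_0((\psi-\epsilon,\psi+\epsilon))$, which, the window having length $<1$, is a quasi-abelian category on which $W$ restricts to a stability function; its Harder--Narasimhan filtrations exist by Bridgeland's quasi-abelian HN argument (finite length of the relevant poset of strict subobjects), and $\calQ(\psi)$ is declared to be the $W$-semistable objects of phase $\psi$ there. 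Checking the axioms then splits as: (a),(b) are formal; (c) holds because a $\calQ$-semistable object of phase $\psi$ is $\calP_0$-filtered with all phases in $(\psi-\epsilon,\psi+\epsilon)$, so phases cannot cross; (d) follows from the $\calP_0$-HN filtration of an arbitrary object refined by the quasi-abelian HN filtrations just produced; and (e) for $\tau$ follows from $\|W-Z_0\|_{\mathrm{op}}<\eta$, which transfers the support inequality for $\sigma_0$ to one for $\tau$ with a slightly larger constant. Uniqueness subject to $d(\sigma_0,\tau)<\epsilon$ is forced, because those phase windows pin down exactly which objects can be $\calQ$-semistable.

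Granting the deformation lemma, the rest is routine. Applying it with $W=Z_0$ gives local injectivity of $\pi$; applying it with $W$ ranging over a small ball around $Z_0$ inside the linear space $V=\Hom_{\bZ}(K(\CC),\bC)$ — finite-dimensional under the stated mild hypotheses on $\CC$ (finite rank of $K(\CC)$, together with (e) cutting out an open set) — shows that $\pi$ maps a neighbourhood $U$ of $\sigma_0$ homeomorphically onto an open subset of $V$, with continuous inverse because $d(\sigma_0,\tau)\to0$ as $W\to Z_0$. Thus the maps $\pi|_U\colon U\to\pi(U)\subset V$ are charts whose transition maps are restrictions of $\id_V$, hence biholomorphic; $\Stab(\CC)$ therefore inherits the structure of a complex manifold modelled on $V$, and $\pi$ is by construction a local biholomorphism, i.e. a local isomorphism.

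The main obstacle is the deformation lemma, and inside it the quasi-abelian machinery: verifying that $\calP_0((\psi-\epsilon,\psi+\epsilon))$ is quasi-abelian, that $W$ is a genuine stability function on it, and above all that Harder--Narasimhan filtrations there exist and are finite, so that axiom (d) survives the perturbation; together with the check that the support property (e) — precisely what keeps $V$ finite-dimensional and makes the statement meaningful in the generality used here — is stable under this small deformation. Everything else (the metric, continuity of $\pi$, and the transport of the linear structure) is formal once that lemma is in place.
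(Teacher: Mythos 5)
This theorem is quoted in the survey as a black box, attributed to \cite{Bridgeland}, so there is no in-paper proof to compare against. Your sketch is an accurate reconstruction of the standard argument: Bridgeland's generalised metric, the deformation lemma built from the thin quasi-abelian subcategories $\calP_0((\psi-\epsilon,\psi+\epsilon))$ with their Harder--Narasimhan theory, and the support property (in the Kontsevich--Soibelman/Bayer formulation) replacing Bridgeland's original local-finiteness hypothesis to make the deformation uniform. The only place you are slightly too quick is axiom (c) when $0<\phi_1-\phi_2<2\epsilon$, where the two phase windows overlap and one must argue inside a single quasi-abelian slice rather than by disjointness of windows --- but you correctly flag the quasi-abelian machinery as the real content, so I regard the outline as sound.
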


Since Stab$(\CC)$ is locally modelled on a fixed vector space which has an integral lattice,  it inherits a great deal of geometry: a flat K\"ahler metric, an  affine structure, a Lebesgue measure, etc.  The group of triangulated autoequivalences $\Auteq(\CC)$ acts on $\Stab(\CC)$ preserving all this structure. It is important that the category $\CC$ is $\bZ$-graded, i.e. that the shift functor is not periodic, so this theory is relevant to symplectic manifolds with $2c_1=0$.

\begin{Example} 
Let $Y \rightarrow \bC$ be the affine $A_n$-threefold, which is a Lefschetz fibration with generic fibre $T^*S^2$,  and with $n+1 \geq 3$ singular fibres.  $Y$ then retracts to an $A_n$-chain of Lagrangian spheres.  There is a connected component of $\mathrm{Stab}(\scrF(Y))$ whose quotient by the subgroup of autoequivalences preserving that component is isomorphic to $\mathrm{Conf}_0^{n+1}(\bC) / \bZ_{n+3}$ (where the cyclic group acts by multiplication by an $(n+3)$-rd root of unity).  See \cite{BridgelandSmith}. \end{Example}

Let $\{\sum_{i=0}^3 z_i^4 = 0\} = Q\subset \bP^3$ be the Fermat quartic $K3$ surface. This carries an action of the group $\bZ_4^4$ by multiplying co-ordinates by roots of unity; denote by $\Gamma \cong \bZ_4^3$ the subgroup of elements which preserve the holomorphic volume form (so $x_i \mapsto \xi_i x_i$ with $\prod_i \xi_i = 1$). Let $\Cech{Q}$ be a crepant resolution of the orbifold $Q / \Gamma$. A conjectural case of mirror symmetry ``dual" to \cite{Seidel:HMSquartic} asserts that (for a suitable K\"ahler form on $\Cech{Q}$)  
\begin{equation} \label{HMS?}
D^{b}(Q) \stackrel{?}{\simeq} D^{\pi}\scrF(\Cech{Q}).
\end{equation}  Indeed by varying the K\"ahler parameters of $\Cech{Q}$ one expects to obtain Fukaya categories dual to derived categories of sheaves on quartics $Q'$ arising as small complex deformations of $Q$. There are representations
\[
\pi_1(\mathcal{M}(\Cech{Q})) \longrightarrow \pi_0\Symp(\Cech{Q}) \longrightarrow \Auteq(D^{\pi}\scrF(\Cech{Q})).
\]
Bridgeland and Bayer \cite{Bridgeland:K3, BayerBridgeland} computed Stab$(Q')$, where $Q'$ is a generic (Picard rank 1) quartic,  and thereby $\Auteq(D^b(Q'))$ (e.g. the ``Torelli group" of autoequivalences acting trivially on $K$-theory, modulo shifts, is an infinite rank free group).  A proof of \eqref{HMS?} would imply that, for suitably generic K\"ahler forms, all ($\bZ$-graded, Calabi-Yau) autoequivalences of $D^{\pi} \scrF(\Cech{Q})$ arise from monodromy of algebraic families (contrast to Section \ref{Sec:MCGIII} and \cite{Smith:HFQuadrics}).

\subsection{Orlov spectrum}  A \emph{strong generator} for a split-closed triangulated category $\CC$ is an object $E \in \CC$ for which every object of $\CC$ is a summand of some twisted complex on $E$, i.e. for which $\Tw^{\pi}\langle E\rangle = \CC$, where $\langle \cdot \rangle$ denotes the triangulated subcategory generated by repeatedly taking shifts and cones. Given a strong generator, its generation time $U(E)$ is the least $n$ such that the entire category $\CC$ is obtained by taking summands of twisted complexes of length at most $n+1$ (i.e. involving taking $n$ cones) The \emph{Orlov spectrum} of a triangulated category is 
\[
OSpec(\CC) \, = \, \left\{U(E) \ | \ E \in \CC, U(E) < \infty \right\} \ \subset \bZ_{\geq 0}.
\]
For instance, if $\CC = D^b(Z)$ is the derived category of coherent sheaves, it essentially follows from the existence of resolutions of sheaves by vector bundles which, in turn, are globally generated after sufficient twisting, that a finite sum of powers of an ample line bundle is a strong generator. The infimum and supremum of the Orlov spectrum give two different notions of the \emph{dimension} of a triangulated category.

\begin{Example} The ``shift" version of Proposition \ref{Prop:SpheresGenerate} mentioned in Section \ref{Sec:K3} implies \cite{BallardEtAl} that if a collection of spherical objects $S_1,\ldots, S_n$ in $\CC$ satisfy a positive relation $T_{S_{i_1}}\cdots T_{s_{i_r}} = [k]$ for some $k$, then $\oplus_i S_i$ is a strong generator with generation time at most $r-1$.  Such positive relations can be constructed from classical Dehn twists on curves, and it follows that $\scrF(\Sigma_g)$ always admits a strong generator given by the direct sum $G$ of $2g$ simple closed curves which form an $A_{2g}$-chain and with generation time $4g \leq U(G) \leq 8g$. 
\end{Example}

\begin{Example}
Let $\Sigma_2$ be a curve of genus 2. From Section \ref{Subsec:genus2}, the category $\scrF(\Sigma_2)$ is equivalent to the $A_{\infty}$-structure  on $\Lambda^*(\bC^3)$ induced by the Maurer-Cartan element $z_1z_2z_3 + z_1^5+z_2^5+z_3^5$.  Constraints on the Orlov spectra of hypersurface singularities were obtained  in \cite{BallardEtAl}, and the supremum of the Orlov spectrum in this case is $< 29$ (more generally $<12g+5$ for $\Sigma_g$).
\end{Example}

Sophisticated potential relations between Orlov spectra and classical algebro-geometric rationality questions have been extensively explored and promulgated by Katzarkov, see  \cite{Katzarkov,BallardEtAl} (such relations essentially stem from constraints upon the Orlov spectrum imposed by semi-orthogonal decompositions of categories arising from blow-ups).  The geometric significance of the Orlov spectrum of a Fukaya category remains largely mysterious, although explicit lower bounds would give constraints on possible  positive relations amongst Dehn twists, and hence on the topology of Lefschetz fibrations with a given symplectic manifold as fibre.  

\subsection{Non-commutative vector fields}  For any $a \in H^1(X;\bR)$ there is a Floer cohomology group associated to the end-point $\phi_a$ of a path of symplectomorphisms with flux $a$.  There is thus a basic connection between $H^1$ and symplectic dynamics. On the other hand, in Gromov-Witten theory or viewed as the domain of $\mathcal{CO}$, $QH^*(X)$ is typically only mod-2 graded, and all the odd degree cohomology of $X$ lives on (more-or-less) the same footing. With this motivation, Seidel introduced new flux-type invariants which probe the symplectic dynamics of $H^{odd}(X)$. 

The philosophical big picture might run as follows.  An $A_{\infty}$-category $\CC$ has a ``derived Picard group", with tangent space $HH^1(\CC,\CC)$.  Given a particular $HH^1$-class (``non-commutative vector field"), one can consider whether or not it integrates to a ``periodic flow" on the  ``moduli space of objects" in $\CC$.  Making this precise has obvious pitfalls:  even in a model situation (for instance $\CC$ the derived category of sheaves on a smooth variety $Z$ and the $HH^1$-class coming from $H^0(TZ)$), one does not expect  a polynomial vector field to integrate to an algebraic flow.  Seidel's insight is that one can nonetheless make sense of algebraic orbits of the (non-existent) flow.  

\newcommand{\scrP}{\EuScript{P}}

It is technically easier to define a notion of ``doubly periodic" orbits. Fix an elliptic curve $\bar{S}$ over the Novikov field, together with a nowhere zero differential $\eta \in \Omega^1_{\bar{S}}$.  Working Zariski-locally over $\bar{S}$,  the notion of a twisted complex of objects of $\CC$ generalises to a complex whose coefficients are modules over the ring of functions $\mathcal{R}$ of an affine $S \subset \bar{S}$, i.e.  vector bundles over $S$.   Families of objects of $\CC$ over $S$ are functors from $\CC^{opp}$ to the $dg$-category of complexes of projective $\mathcal{R}$-modules.  This leads to an extended category $\CC_{\mathcal{R}}$, which has an associated derived category Perf$(\CC_{\mathcal{R}})$ of perfect modules. 
Projective $\mathcal{R}$-modules $\scrG$ admit essentially unique connections, i.e. maps $\scrG \rightarrow \Omega^1_{S} \otimes \scrG$ satisfying the Leibniz property. A family with connection $\scrP$  has an associated infintesimal deformation class Def$(\scrP)$, given by using the connection to differentiate the internal differential in the underlying twisted complex.  This leads to a natural map
\begin{equation} \label{Eqn:DifferentiateConnection}
\mathrm{Def}: HH^{odd}(\CC_{\mathcal{R}}, \CC_{\mathcal{R}}) \longrightarrow H^1(hom_{\mathrm{Perf}(\CC_{\mathcal{R}})}(\scrP, \scrP))
\end{equation}
for any family $\scrP \in \CC_{\mathcal{R}}$.  On the other hand, there is a natural map of algebraic vector bundles
\begin{equation} \label{Eqn:FamilyDeviation}
\nabla: TS \longrightarrow H^1(hom_{\mathrm{Perf}(\CC_{\mathcal{R}})}(\scrP, \scrP))
\end{equation}
which measures how the family of objects deforms in a given direction in $S$.  We say that an object $\scrP$ \emph{follows the deformation field} $[a] \in HH^{odd}(\CC_{\mathcal{R}}, \CC_{\mathcal{R}})$ if \eqref{Eqn:DifferentiateConnection} and \eqref{Eqn:FamilyDeviation} are related by
\[
\nabla \ = \ \eta \otimes \mathrm{Def}([a])
\]
where $\eta$ is the fixed 1-form on $\bar{S}$.  A family which follows the  deformation field is regarded as belonging to an algebraic orbit of the formal flow associated to $[a]$.  We then define the subset
\[
\mathrm{Per}(\CC) \subset HH^{odd}(\CC,\CC)
\]
of periodic elements to be those classes $[a]$ for which every object $A$ extends to a family $\scrA$ over an affine $S\subset \bar{S}$  which follows the field $[a]$. Thus, periodic classes are ones for which \emph{all} objects can be made the special fibres of families over $S\subset \bar{S}$.  

For this to be useful, one needs ways of showing classes are or are not periodic. 
% It is helpful to compare to classical flux:  to show $a\in H^1(X)$ belongs to the flux group, it suffices to construct a family $\{a_t\}$ of 1-forms for which the associated symplectic isotopy ends at $\phi_a= $ identity; to show $a\in H^1(X)$ is not in the flux group, it suffices to show $HF^*(\phi_a) \not\cong QH^*(X)$.   
If $X \rightarrow \bar{S}$ is a trivial symplectic fibre bundle, the algebraic notion of family of objects is sufficiently close to geometry to infer that trivial product  families of Lagrangian submanifolds are indeed families of objects, and to conclude that classes coming from $H^1(\bar{S})$ are periodic.  On the other hand, if $X\rightarrow \bar{S}$ is a non-trivial mapping torus, one can hope to show that classes in $H^1(\bar{S})$ are \emph{not} periodic by Floer cohomology computations.  Going back to the question of existence versus uniqueness of flows for algebraic vector fields, Seidel shows that if a family $\scrA$ following a deformation field $[a]$ exists, it is unique, and hence its pullback to any finite cover is unique.  Using the existence of symplectomorphisms $f$ (arising as monodromy of $X\rightarrow \bar{S}$) for which $HF^*(f^m)$ carries a non-trivial $\bZ_m$-action, one can aim to  produce distinct families following a fixed deformation on an $m$-fold covering of $X$, violating periodicity of a particular class on $X$ itself.   Whilst these examples amount to embedding classical  flux in the more abstract notion of periodicity, the pay-off is that periodic elements are an invariant of the Fukaya category, which gives some robustness under surgery operations which kill the fundamental group (subcritical handle addition in the exact case, or passing to certain blow-ups in the closed case).   

\begin{Example} Let $K \subset \bP^3$ be a quartic $K3$ surface. Let $X_1 = K^2 \times T^2$ and $X_2 = S^1 \times Y$, with $Y$ the mapping torus of a symplectomorphism $f$ of $K^2$ which is differentiably but not symplectically trivial (this is a more complicated version of the squared Dehn twist).  One can embed the $X_i$ into $K^7$ by an $h$-principle. Seidel \cite{Seidel:Flux} shows that  for suitable $f$ the blow-ups $Z_1 = Bl_{X_1}(K^7)$ and $Z_2 = Bl_{X_2}(K^7)$ are symplectically distinct, although they are diffeomorphic and deformation equivalent (hence have the same Gromov-Witten invariants, quantum cohomology etc).  They are distinguished by the subsets $\mathrm{Per}(Z_i) \subset QH^{odd}(Z_i) \cong HH^{odd}(\scrF(Z_i),\scrF(Z_i))$; for $Z_1$ both classes in $H^3(Z_1)$ coming from $H^1(T^2)$ are periodic, whilst for $Z_2$ the periodic subset intersects that 2-dimensional lattice in a single copy of $\bZ$. 
\end{Example}

\begin{Notes}
For stability conditions on Fukaya categories, see \cite{KS, Smith:quiver, Thomas:braid}. For Katzarkov's program, see \cite{BallardEtAl,KatzarkovKerr, Katzarkov}. For non-commutative vector fields, see \cite{Seidel-Solomon, Seidel:Flux, AbouzaidSmith:arc}.
\end{Notes}

%%%%%%%%%%%%%%%%%%%%%%%%%%%%%

\bibliographystyle{amsplain}

\begin{thebibliography}{100}



\bibitem{Abouzaid:exotic}
M.~Abouzaid.
\emph{Framed bordism and Lagrangian embeddings of exotic spheres.}
Ann. of Math. 175:71--185, (2012).

\bibitem{Abouzaid:Maslov0}
M.~Abouzaid.
\emph{Nearby Lagrangians with vanishing Maslov class are homotopy equivalent.}
Invent. Math. 189:251--313, (2012).

\bibitem{Abouzaid:generation}
M.~Abouzaid.
\emph{A geometric criterion for generating the Fukaya category.}
Publ. Math. de l'IH\'ES. 112:191--240, (2010).

\bibitem{AbouzaidSmith}
M.~Abouzaid and I.~Smith.
\emph{Homological mirror symmetry for the four-torus.}
Duke Math. J. 152:373--440, (2010).

\bibitem{AbouzaidSmith:plumbing}
M.~Abouzaid and I.~Smith.
\emph{Exact Lagrangians in plumbings.} 
Geom. Funct. Anal. 22:785Ð831, (2012).

\bibitem{AbouzaidSmith:arc}
M.~Abouzaid and I.~Smith.
\emph{The symplectic arc algebra is formal.}
Preprint, available at arXiv:1311.5535.

\bibitem{Albers}
P.~Albers.
\emph{On the extrinsic topology of Lagrangian submanifolds.}
Internat. Math. Res. Not. 38:2341--2371, (2005).

\bibitem{AspinwallEtAl}
P.~Aspinwall et al.
\emph{Dirichlet branes and mirror symmetry.}
Amer. Math. Soc. 2009.

\bibitem{AudinDamian}
M.~Audin and M.~Damian.
\emph{Th\'eorie de Morse et homologie de Floer.}
EDP Sciences, (2010).

\bibitem{Auroux}
D.~Auroux.
\emph{A beginner's introduction to Fukaya categories.}
Preprint, available at arXiv:1301.7056.

\bibitem{Auroux:slag}
D.~Auroux.
\emph{Special Lagrangian fibrations, wall-crossing, and mirror symmetry.}
Surveys in Differential Geometry, 13:1--47. Intl. Press, 2009.


\bibitem{Ballard}
M.~Ballard.
\emph{Meet homological mirror symmetry.}
In ``Modular forms and string duality", Amer. Math. Soc. (2008).

\bibitem{BallardEtAl}
M.~Ballard, D.~Favero and L.~Katzarkov.
\emph{Orlov spectra: bounds and gaps.}
Invent. Math. 189:359--430, (2012).

\bibitem{BayerBridgeland}
A.~Bayer and T.~Bridgeland.
\emph{Derived automorphism groups of $K3$ surfaces of Picard rank 1.}
Preprint, available at arXiv:1310.8266.

\bibitem{Beauville}
A.~Beauville.
\emph{Quantum cohomology of complete intersections.} 
Mat. Fiz. Anal. Geom. 2, 384Ð398, (1995)

\bibitem{BiranCornea}
P.~Biran and O.~Cornea.
\emph{Rigidity and uniruling for Lagrangian submanifolds.}
Geom. Topol. 13:2881--2989, (2009).

\bibitem{BiranCornea2}
P.~Biran and O.~Cornea.
\emph{Lagrangian cobordism, I.}
J. Amer. Math. Soc. 26:341--426, (2013).

\bibitem{Bridgeland}
T.~Bridgeland.
\emph{Stability conditions on triangulated categories.}
Ann. of Math. 166:317--345, (2007).

\bibitem{Bridgeland:K3}
T.~Bridgeland.
\emph{Stability conditions on K3 surfaces.}
Duke Math. J. 141:241--291, (2008).

\bibitem{BridgelandSmith}
T.~Bridgeland and I.~Smith.
\emph{Quadratic differentials as stability conditions.}
Preprint, available at arXiv:1302.7030.

\bibitem{Buhovsky}
L.~Buhovski.
\emph{Homology of Lagrangian submanifolds in cotangent bundles.}
Israel J. Math. 143:181Ð187,  (2004). 

\bibitem{Buhovski:ring}
L.~Buhovski.
\emph{The Maslov class of Lagrangian tori and quantum products in Floer cohomology.} 
J. Topol. Anal. 2:57--75, (2010).


\bibitem{ACdaS}
A.~Cannas da Silva.
\emph{Lectures on symplectic geometry.}
Springer LNM 1764, 2nd ed. (2008).

\bibitem{Cho}
C.-H.~Cho.
\emph{Holomorphic discs, spin structures and the Floer cohomology of the Clifford torus.}
Internat. Math. Res. Not. 35:1803--1843, (2004).

\bibitem{Cho-Oh}
C.-H.~Cho and Y.-G.~Oh.
\emph{Floer cohomology and disc instantons of Lagrangian torus fibres in toric Fano manifolds.}
Asian J. Math. 10:773--814, (2006).

\bibitem{ChoLee}
C.-H.~Cho and S.~Lee.
\emph{Potentials of homotopy cyclic $A_{\infty}$-algebras.}
Homol. Homot. Appl. 14:203-220, (2012).

\bibitem{CieliebakEliashberg}
K.~Cieliebak and Y.~Eliashberg.
\emph{From Stein to Weinstein and back.}
Amer. Math. Soc., 2012.

\bibitem{Clemens}
H.~Clemens.
\emph{Double Solids}
Adv. Math. 47:107-230, (1983).

\bibitem{CortiSmith}
A.~Corti and I.~Smith.
\emph{Conifold transitions and Mori theory.}
Math. Res. Lett. 12:767--778, (2005).

\bibitem{deSilva}
V.~de Silva.
\emph{Products in the symplectic Floer homology of Lagrangian intersections.}
D.Phil thesis, University of Oxford, 1998.

\bibitem{SKD}
S.\,K.~Donaldson.
\emph{Floer homology and algebraic geometry.}
LMS Lecture Notes 208:119--138, Cambridge Univ. Press, (1995).

\bibitem{Efimov}
A.~Efimov.
\emph{Homological mirror symmetry for curves of higher genus.}
Adv. Math. 230:493--530, (2012).


\bibitem{EkholmSmith}
T.~Ekholm and I.~Smith.
\emph{Exact Lagrangian immersions with a single double point.}
Preprint, available at arXiv:1111.5932.

\bibitem{YETI}
Y.~Eliashberg, T.~Ekholm, E.~Murphy and I.~Smith.
\emph{Constructing exact Lagrangian immersions with few double points.}
Geom. Funct. Anal. 23:1772--1803, (2013).

\bibitem{EOSS}
D.~Ellwood, P.~Ovzv\'ath, A.~Stipsicz and Z.~Szab\'o.
\emph{Floer homology, gauge theory and low-dimensional topology.}
Clay Mathematics Proceedings 5, Amer. Math. Soc. (2006).

\bibitem{FLP}
A.~Fathi, F.~Laudenbach and V.~Poenaru.
\emph{Travaux de Thurston sur les surfaces.}
Asterisque 66-67.  Soc. Math. de France, (1991).

\bibitem{Floer}
A. Floer.
\emph{Morse theory for Lagrangian intersections.}
J. Diff. Geom. 28:513-547, (1988).

\bibitem{FHS}
A.~Floer, H.~Hofer and D.~Salamon.
\emph{Transversality in elliptic Morse theory for the symplectic action.}
Duke Math. J. 80:251--292, (1995).

\bibitem{FrauSchlenk}
U.~Frauenfelder and F.~Schlenk.
\emph{Fiberwise volume growth via Lagrangian intersections.} 
J. Symp. Geom. 4:117Ð148, (2006).



\bibitem{Fukaya:cyclic}
K.~Fukaya.
\emph{Cyclic symmetry and adic convergence in Lagrangian Floer theory.}
Preprint, arXiv:0907.4219.

\bibitem{Fukaya:LagC3}
K.~Fukaya.
\emph{Application of Floer homology of Lagrangian submanifolds to symplectic topology.}
In ``Morse theoretic methods in nonlinear analysis and symplectic topology", 231--276, Springer, 2006.

\bibitem{FOno}
K.~Fukaya and K.~Ono.
\emph{Arnol'd conjecture and Gromov-Witten theory.}
Topology 38:933--1048, (1999).

\bibitem{FO3}
K.~Fukaya, Y.-G.~Oh, K.~Ohta and K.~Ono.
\emph{Lagrangian intersection Floer theory -- Anomaly and obstruction, I \& II.}
AMS Studies in Advanced Mathematics Vol. 46, International Press, (2009).

\bibitem{FO3:technical}
K.~Fukaya, Y.-G.~Oh, K.~Ohta and K.~Ono.
\emph{Technical details on Kuranishi structure and virtual fundamental chain.}
Preprint book manuscript, available at arXiv:1209.4410.

\bibitem{FO3:toric}
K.~Fukaya, Y.-G.~Oh, K.~Ohta and K.~Ono.
\emph{Lagrangian Floer theory on compact toric manifolds, I.}
Duke Math. J. 151:23--175,  (2010).

\bibitem{FSS}
K.~Fukaya, P.~Seidel and I.~Smith.
\emph{Exact Lagrangian submanifolds in simply-connected cotangent bundles.}
Invent. Math. 172:1--27, (2008).

\bibitem{FSS2}
K.~Fukaya, P.~Seidel and I.~Smith.
\emph{The symplectic geometry of cotangent bundles from a categorical viewpoint.}
Lect. Notes Phys. 757:1--27, (2009).

\bibitem{Gompf}
R.~Gompf.
\emph{A new construction of symplectic manifolds.}
Ann. of Math. 142:527--595, (1995).


\bibitem{Gross:tropical}
M.~Gross.
\emph{Tropical geometry and mirror symmetry.}
Amer. Math. Soc., (2011).

\bibitem{GrossSiebert}
M.~Gross and B.~Siebert.
\emph{From real affine geometry to complex geometry.}
Ann. of Math. 174:1301--1428, (2011).

\bibitem{Gromov}
M.~Gromov.
\emph{Pseudoholomorphic curves in symplectic manifolds.}
Invent. Math. 82:307--347, (1985).

%\bibitem{GLS}
%V.~Guillemin, E.~Lerman and S.~Sternberg.
%\emph{Symplectic fibrations and multiplicity diagrams.}
%Cambridge Univ. Press, 1996.

\bibitem{HZ}
H.~Hofer and E.~Zehnder. 
\emph{Symplectic invariants and Hamiltonian dynamics.}
Birkh\"auser, (1994).

\bibitem{HWZ}
H.~Hofer, K.~Wysocki and  E.~Zehnder.
\emph{A general Fredholm theory. I. A splicing-based differential geometry.} 
J. Eur. Math. Soc. (JEMS) 9: 841Ð876, (2007). 

\bibitem{Kadeishvili}
T.~V.~Kadeishvili.
\emph{The algebraic structure in the homology of an $A_{\infty}$-algebra.}
Soobshch. Akad. Nauk. Gruzin. 108:249-252, (1983).

\bibitem{Katzarkov}
L.~Katzarkov.
\emph{Generalised homological mirror symmetry and rationality questions.}
Progr. Math. 282, Birkh\"auser, (2010).

\bibitem{KatzarkovKerr}
L.~Katzarkov and G.~Kerr.
\emph{Orlov spectra as a filtered cohomology theory.}
Adv. Math. 243:232--261, (2013).

\bibitem{Khovanov-Seidel}
M.~Khovanov and P.~Seidel.
\emph{Quivers, Floer cohomology and braid group actions.}
J. Amer. Math. Soc. 15:203--271, (2002).

\bibitem{Kontsevich:DefQuant}
M.~Kontsevich.
\emph{Deformation quantization of Poisson manifolds.}
Lett. Math. Phys. 66:157--216, (2003).

\bibitem{Kontsevich}
M.~Kontsevich.
\emph{Homological algebra of mirror symmetry.}
Proceedings of the International Congress of Mathematicians (Z\"urich 1994), Birkh\"auser (1995).

\bibitem{KS}
M.~Kontsevich and Y.~Soibelman.
\emph{Wall-crossing structures in Donaldson-Thomas invariants and mirror symmetry.}
Preprint, available at arXiv:1303.3253.

\bibitem{Kragh}
T.~Kragh.
\emph{Parametrized ring-spectra and the nearby Lagrangian conjecture. With an appendix by Mohammed Abouzaid.}
Geom. Topol. 17:639--731, (2013).

\bibitem{Lalonde-Sikorav}
F.~Lalonde and J.-C.~Sikorav.
\emph{Sous-vari\'et\'es lagrangiennes et lagrangiennes exactes des fibr\'es cotangents.} 
Comment. Math. Helv. 66:18Ð33, (1991). 

\bibitem{LekiliPerutz}
Y.~Lekili and T.~Perutz.
\emph{Fukaya categories of the torus and Dehn surgery.}
Proc. Natl. Acad. Sci. USA 108:8106--8113, (2011).

\bibitem{LekiliPerutz2}
Y.~Lekili and T.~Perutz.
\emph{Arithmetic mirror symmetry for the 2-torus.}
Preprint, available at arXiv:1211.4632.

\bibitem{LiuTian}
G.~Liu and G.~Tian.
\emph{Floer homology and Arnol'd conjecture.}
J. Diff. Geom. 49:1--74, (1998).

\bibitem{McD-S}
D.~McDuff and D.~Salamon.
\emph{Introduction to symplectic topology.}
Oxford Univ. Press, 2nd ed. (1998).

\bibitem{McD-S2}
D.~McDuff and D.~Salamon.
\emph{$J$-holomorphic curves and symplectic topology.} 2nd ed.
Amer. Math. Soc. (2012).

\bibitem{McLean:Computability}
M.~McLean.
\emph{Computability and the growth rate of symplectic homology.}
Preprint, available at arXiv:1109.4466.

\bibitem{Nadler}
D.~Nadler.
\emph{Microlocal branes are constructible sheaves.}
Selecta Math. 15:563--619, (2009).

\bibitem{Newstead}
P.~Newstead.
\emph{Stable bundles of rank 2 and odd degree over a curve of genus 2.} 
Topology 7:205Ð215, (1968).


\bibitem{Oh}
Y.-G.~Oh.
\emph{Floer cohomology of Lagrangian intersections and pseudo-holomorphic disks, I.}
Comm. Pure Appl. Math. 46:949-993, (1993).

\bibitem{Oh:Thomas}
Y.-G.~Oh.
\emph{Floer cohomology, spectral sequences, and the Maslov class of Lagrangian embeddings.}
Internat. Math. Res. Notices 7:305--346, (1996).

\bibitem{Oh:Twist}
Y.-G.~Oh.
\emph{Seidel's long exact sequence on Calabi-Yau manifolds.}
Kyoto J. Math. 51:687-765, (2011).

\bibitem{Ono}
K.~Ono.
\emph{Floer-Novikov cohomology and the flux conjecture.}
Geom. Funct. Anal. 16:981--1020, (2006).

\bibitem{Orlov:abelian}
D.~Orlov.
\emph{Derived categories of coherent sheaves on abelian varieties and equivalences between them.}
Izv. Math. 66:569-594, (2002).


\bibitem{Orlov:Dbsing}
D.~Orlov.
\emph{Triangulated categories of singularities and D-branes in Landau-Ginzburg models.} 
Proc. Steklov Inst. Math. 246:227Ð249, (2004).

\bibitem{PolishchukZaslow}
A.~Polishchuk and E.~Zaslow.
\emph{Categorical mirror symmetry: the elliptic curve.}
Adv. Theor. Math. Phys. 2:443--470, (1980).

\bibitem{PZ2}
A.~Polishchuk.
\emph{Massey and Fukaya products on elliptic curves.}
Adv. Theor. Math. Phys. 4:1187-1207, (2000).

\bibitem{Pol:weil}
A.~Polishchuk.
\emph{Analogue of Weil representation for abelian schemes.}
J. Reine Ang. Math. (Crelle's J.) 543:1-37, (2002).


\bibitem{Polterovich}
L.~Polterovich.
\emph{The surgery of Lagrange submanifolds.}
 Geom. Funct. Anal. 1:198Ð210, (1991).

%\bibitem{Polterovich}
%L.~Polterovich.
%\emph{Geometry of the group of symplectic diffeomorphisms.}
%ETH Lecture Notes, Birkh\"auser, (2001).

\bibitem{RitterSmith}
A.~Ritter and I.~Smith.
\emph{The open-closed string map revisited.}
Preprint, available at arXiv:1201.5880.

\bibitem{Rivin}
I.~Rivin.
\emph{Walks on groups, counting reducible matrices, polynomials, and surface and free group automorphisms.}
Duke Math. J. 142:353--379, (2008).

\bibitem{RobbinSalamon}
J.~Robbin and D.~Salamon.
\emph{The Maslov index for paths.}
Topology 32:827-844, (1993).




\bibitem{Seidel:graded}
P.~Seidel.
\emph{Graded Lagrangian submanifolds.}
Bull. Soc. Math. France 128:103-146, (2000).


\bibitem{Seidel:MCG}
P.~Seidel. 
\emph{Symplectic Floer homology and the mapping class group.} 
Pacific J. Math. 206:219Ð229, (2002).

\bibitem{Seidel:LES}
P.~Seidel.
\emph{A long exact triangle for symplectic Floer cohomology.}
Topology, 2003.

\bibitem{Seidel:twist}
P.~Seidel.
\emph{Lectures on four-dimensional Dehn twists.}
Lecture Notes in Math., 1938:231Ð267, Springer, 2008.


\bibitem{FCPLT}
P.~Seidel.
\emph{Fukaya categories and Picard-Lefschetz theory.}
ETH Lecture Notes Series, European Math. Soc., 2008.

\bibitem{Seidel:bias}
P.~Seidel.
\emph{A biased view of symplectic cohomology.}
Current Developments in Mathematics 2006:211--253, (2008).

\bibitem{Seidel:HMSquartic}
P.~Seidel.
\emph{Homological mirror symmetry for the quartic surface.}
Mem. Amer. Math. Soc. (to appear)

\bibitem{Seidel:HMSgenus2}
P.~Seidel.
\emph{Homological mirror symmetry for the genus 2 curve.}
Preprint, arXiv:0812.0871.

\bibitem{Seidel:Flux}
P.~Seidel.
\emph{Abstract analogues of flux as symplectic invariants.}
M\'em. Soc. Math. France (to appear).

\bibitem{Seidel-Solomon}
P.~Seidel and J.~Solomon.
\emph{Symplectic cohomology and $q$-intersection numbers.}
Geom. Funct. Anal. 22:443--477, (2012).

\bibitem{Sheridan}
N.~Sheridan.
\emph{On the homological mirror symmetry conjecture for pairs of pants.}
J. Diff. Geom. 89:211--367, (2011).

\bibitem{Sheridan2}
N.~Sheridan.
\emph{Homological mirror symmetry for Calabi-Yau hypersurfaces in projective space.}
Preprint, available at arXiv:1111.0632.

\bibitem{Sheridan3}
N.~Sheridan.
\emph{On the Fukaya category of a Fano hypersurface in projective space.}
Preprint, available at arXiv:1306.4143.

\bibitem{SingerSternberg}
I.~Singer and S.~Sternberg.
\emph{The infinite groups of Lie and Cartan, I (the transitive groups).}
Jour. d'Analyse 15, (1965).

\bibitem{STY}
I.~Smith, R.~Thomas and S.T.~Yau.
\emph{Symplectic conifold transitions.}
J. Diff. Geom. 62:209-242, (2002).

\bibitem{Smith:HFQuadrics}
I.~Smith.
\emph{Floer cohomology and pencils of quadrics.}
Invent. Math. 189:149--250, (2012).

\bibitem{Smith:quiver}
I.~Smith.
\emph{Quiver algebras as Fukaya categories.}
Preprint, available at arXiv:1309.0452.

\bibitem{SYZ}
A.~Strominger, S.-T.~Yau and E.~Zaslow.
\emph{Mirror symmetry is T-duality.}
Nucl. Phys. B 479:243--259, (1996).


\bibitem{Thomas:braid}
R.~Thomas.
\emph{Stability conditions and the braid group.}
Comm. Anal. Geom. 14:135--161, (2006).

\bibitem{WW1}
K.~Wehrheim and C.~Woodward.
\emph{Functoriality for Lagrangian correspondences in Floer theory.}
Quantum Topol. 1:129-170, (2010).

\bibitem{WW2}
K.~Wehrheim and C.~Woodward.
\emph{Quilted Floer cohomology.}
Geom. Topol. 14:833-902, (2010).

\bibitem{Woodward}
C.~Woodward.
\emph{Gauged Floer theory of toric moment fibres.}
Geom. Funct. Anal. 21:680--749, (2011).

\bibitem{Woolf}
J.~Woolf.
\emph{Algebraic stability conditions and contractible stability spaces.}
Preprint, available at arXiv:1407.5986.

\end{thebibliography}

\end{document}